\documentclass{amsart}
\usepackage{amsmath}
\usepackage{amssymb}
\usepackage{amsthm}
\usepackage{enumerate}
\usepackage[pdftex]{graphicx}
\usepackage{caption}
\usepackage{amscd}
\theoremstyle{definition}
\newtheorem{definition}{Definition}[section]
\theoremstyle{plain}
\newtheorem{lemma}[definition]{Lemma}
\newtheorem{theorem}[definition]{Theorem}

\newtheorem{proposition}[definition]{Proposition}
\newtheorem{corollary}[definition]{Corollary}
\theoremstyle{remark}
\newtheorem{remark}[definition]{Remark}

\makeatletter
\@namedef{subjclassname@2020}{
  \textup{2020} Mathematics Subject Classification}
\makeatother
\scrollmode
\newcommand{\mycl}{\operatorname{cl}}
\newcommand{\myint}{\operatorname{int}}

\newcommand{\myrank}{\operatorname{rank}}

\newcommand{\myIso}{\operatorname{iso}}
\newcommand{\myLpt}{\operatorname{lpt}}
\newcommand{\myedim}{\operatorname{eRank}}
\newcommand{\mypdeg}{\operatorname{p.deg}}
\newcommand{\myReg}{\operatorname{Reg}}

\begin{document}
\title[Definable $\mathcal C^r$ structures on definable topological groups]{Definable $\mathcal C^r$ structures on definable topological groups in d-minimal structures}
\author[M. Fujita]{Masato Fujita}
\address{Department of Liberal Arts,
Japan Coast Guard Academy,
5-1 Wakaba-cho, Kure, Hiroshima 737-8512, Japan}
\email{fujita.masato.p34@kyoto-u.jp}

\begin{abstract}
Definable topological groups whose topologies are affine have definable $\mathcal C^r$ structures in d-minimal expansions of ordered fields, where $r$ is a positive integer.
We prove this fact using a new notion called partition degree of a definable set.
Basic properties of partition degree are also studied.
\end{abstract}

\subjclass[2020]{Primary 03C64; Secondary 54H11}

\keywords{d-minimality; definable topological group; definable $\mathcal C^r$ structure}

\maketitle

\section{Introduction}\label{sec:intro}
Pillay proved that any group $G$ definable in an o-minimal structure can be equipped with a definable topology which makes $G$ an abstract definable manifold whose group operation and inverse induced from $G$ are continuous \cite{Pillay2}. 
We can easily extend it to the definable $\mathcal C^r$ category when the o-minimal structure is an expansion of an ordered field. 
The purpose of this paper is to prove an assertion weaker than but similar to Pillay's results in d-minimal expansions of ordered fields; that is, for any positive integer $r$, any definable topological group $G$ is definably homeomorphic to an abstract definable $\mathcal C^r$ manifold whose group operation and inverse induced from $G$ are of class $\mathcal C^r$.
%We can say more by combining the main result with the imbedding theorem introduced in \cite{FK2}.
%The words `an abstract definable $\mathcal C^r$ manifold' in the main theorem can be replaced with `a definable $\mathcal C^r$ submanifold'.
See Definition \ref{def:mfd}, Definition \ref{def:crstr} and Theorem \ref{thm:cr_structure_on_group} for a precise description of the main theorem.

We introduce relevant works.
Mosley observed that Pillay's method works for sufficiently saturated first-order topological structures on which natural conditions are imposed such as the exchange property of algebraic closure \cite{Morley}.
First-order topological structures are defined in another Pillay's paper \cite{Pillay1}.
Every expansion of a linear order is a first-order topological structure, and the structures discussed in this paper are first-order topological structures. 

In \cite{Wencel}, Wencel gave a purely topological proof for Pillay's results.
His method is applicable to all first-order topological structures which have dimension functions satisfying van den Dries's requirements \cite[Definition]{vdD2} and the continuity property.
The continuity property means that, for any definable function $f:X \to F$, where $F$ is the universe, the set of points at which $f$ is discontinuous is of dimension smaller than $\dim X$. 
An o-minimal structure satisfies the above conditions \cite[Chapter 4]{vdD}.
More generally, a definably complete locally o-minimal structure also satisfies the above conditions \cite[Proposition 2.8]{FKK}.
Recall that o-minimality implies definable completeness and local o-minimality. 

D-minimality is a weaker concept than definably complete local o-minimality; that is, a definably complete locally o-minimal structure is always d-minimal. 
Miller first defined the notion of d-minimality when the universe is the set of reals \cite{Miller-dmin} and Fornasiero generalized Miller's definition.
Several structures other than definably complete locally o-minimal structures are known to be d-minimal \cite{FM,MT}.
We give a definition of d-minimality below.
We employed Fornasiero's definition in \cite{Fornasiero}.
\begin{definition}
	An expansion of a dense linear order without endpoints $\mathcal F=(F,<,\ldots)$ 
	is \textit{definably complete} if every definable subset of $F$ has both a supremum and an infimum in 
	$F \cup \{ \pm \infty\}$ \cite{M}.
	The structure $\mathcal F$  is \textit{d-minimal} if it is definably complete, and every definable subset $X$ of $F$ is the union of an open set and finitely many discrete sets, where the number of discrete sets does not depend on the parameters of definition of $X$	\cite{Fornasiero}.
\end{definition}
D-minimal structures enjoy some common tame topological properties possessed by definably complete locally o-minimal structures.
See \cite{Fuji_tame, FKK} for studies of tameness of sets definable in definably complete locally o-minimal structures.
For instance, the standard topological dimension function defined in Definition \ref{def:dim} satisfies van den Dries's requirements in d-minimal expansions of ordered fields as it does in the definably complete locally o-minimal case.
However, the standard topological dimension function does not necessarily have the continuity property in d-minimal structures.
It implies that Wencel's results \cite{Wencel} are not applicable to d-minimal structures.
We need new tools to prove our main theorem.

Partition degree and hugeness are key tools developed for our proof of the main theorem.
The author believes that they are also useful for other studies of tameness of sets definable in d-minimal structures. 
For a given definable set $X$ of dimension $d$, there exists a definable $\mathcal C^r$ submanifold of dimension $d$ which is contained and open in $X$.
The largest definable $\mathcal C^r$ submanifold of dimension $d$ satisfying the above condition is denoted by $\myReg_r(X)$.
The difference $X \setminus \myReg_r(X)$ may be still of dimension $d$, but it contains a definable $\mathcal C^r$ submanifold of dimension $d$ which is open in it.
We obtain a definable set of dimension smaller than $d$ after repeatedly removing definable $\mathcal C^r$ submanifolds of dimension $d$ from $X$ finitely many times.
The partition degree of $X$ is the number of removed definable $\mathcal C^r$ submanifolds through the above process. 
Hugeness is an alternative tool to $d$-largeness used in Wencel's study \cite{Wencel}.
A definable subset $X$ of a definable set $Y$ is huge if and only if $\dim (\myint_Y(Y \setminus X))<\dim Y$.
Partition degree and hugeness are defined and their properties are investigated in Section \ref{sec:partition_degree}.
%In appendix \ref{sec:appendix_proof}, we prove Theorem \ref{thm:finite_pdeg2} which asserts an interesting property of partition degree, but is not applied in this paper. 

Section \ref{sec:defnable_grpup} is devoted to the introduction of definitions used in the main theorem such as abstract definable $\mathcal C^r$ manifolds and a proof of the main theorem. 
Section \ref{sec:preliminary} is a preliminary section and collects basic assertions proved in the previous works.

We introduce the terms and notations used in this paper. 
Throughout, the term ‘definable’ means ‘definable in the given structure with parameters.’
For any expansion of a linear order whose universe is $F$, we assume that $F$ is equipped with the order topology induced from the linear order $<$ and the topology on $F^n$ is the product topology of the order topology on $F$ unless the topology in consideration is explicitly described.
An open box in $F^n$ is the product of $n$ many nonempty open intervals.
We set $|x|=\max\{|x_i|\;|\; 1 \leq i \leq n\}$ for $x=(x_1,\ldots, x_n) \in F^n$.
For a topological space $X$, $\myint_{X}(A)$, $\mycl_{X}(A)$ and $\partial_{X}(A)$ denote the interior, the closure and the frontier of a subset $A$ of $X$ in $X$, respectively.
We drop the subscript $X$ when the topology space $X$ is obvious from the context.

\section{Preliminary}\label{sec:preliminary}
We recall several basic assertions used in this paper.
\begin{theorem}[Definable inverse function theorem]\label{thm:inverse}
Let $r$ be a positive integer.
Consider a definably complete expansion of an ordered field $\mathcal F=(F,<,+,\cdot,0,1,\ldots)$.	
Let $f:U \to F^m$ be a definable $\mathcal C^r$ map defined on a definable open subset $U$ of $F^m$.
Take a point $a \in U$ so that the differential $d_af:F^m \to F^m$ of $f$ at the point $a$ is invertible.
Then there are definable open neighborhoods $U'$ of the point $a$ contained in $U$ and $V'$ of $f(a)$ such that the restriction $f|_{U'}:U' \to V'$ of $f$ to $U'$ is a definable $\mathcal C^r$ diffeomorphism onto $V'$.   
\end{theorem}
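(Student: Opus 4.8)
The plan is to follow the classical proof of the inverse function theorem, replacing the contraction-mapping iteration (which requires Cauchy completeness, unavailable here) by a minimization argument that uses only definable completeness. First I would reduce to a normalized situation: pre- and post-composing $f$ with the translation $x \mapsto x - a$, the linear isomorphism $(d_a f)^{-1}$, and the translation $y \mapsto y - f(a)$ --- all definable $\mathcal C^r$ diffeomorphisms --- I may assume $a = 0$, $f(0) = 0$ and $d_0 f$ is the identity. Since $\det(d_x f)$ is a polynomial in the (continuous) partial derivatives of $f$ and is nonzero at $0$, I can fix a closed box $B$ centered at $0$ on which $d_x f$ is invertible and on which $\|d_x f - I\|$ is small; the definable mean value inequality, applied along segments inside the convex set $B$, then shows that $g(x) := x - f(x)$ is a contraction on $B$, whence $|f(x) - f(x')| \geq \tfrac{1}{2}\,|x - x'|$ for all $x, x' \in B$. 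In particular $f$ is injective on $B$.

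For local surjectivity I would avoid iterating $g$ and instead argue by minimization. Given $y$ in a sufficiently small box $V'$ around $0$, consider the definable continuous function $\phi(x) = \sum_{i=1}^m (f_i(x) - y_i)^2$ on the closed box $B$. By the extreme value theorem valid in definably complete structures, $\phi$ attains a minimum at some $x_0 \in B$; comparing the value of $\phi$ at the center $0$ with its values on $\partial B$ shows that, for $y$ small enough, $x_0$ lies in $\myint_{F^m}(B)$. Hence $x_0$ is a critical point of $\phi$, so the relation $(d_{x_0} f)^{\mathsf T}(f(x_0) - y) = 0$ holds; invertibility of $d_{x_0} f$ forces $f(x_0) = y$. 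Setting $U' = \myint_{F^m}(B) \cap f^{-1}(V')$ then gives a definable open neighborhood of $0$ on which $f|_{U'} : U' \to V'$ is a definable bijection.

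It remains to establish regularity of the inverse. Continuity of $f^{-1}$ is immediate from the lower Lipschitz bound above. Differentiability at each $y \in V'$ follows from the standard difference-quotient estimate, yielding $d_y f^{-1} = (d_{f^{-1}(y)} f)^{-1}$. Finally, by Cramer's rule the entries of $(d_x f)^{-1}$ are rational functions of the entries of $d_x f$, which are of class $\mathcal C^{r-1}$; composing with the continuous map $f^{-1}$ and bootstrapping on the order of differentiability shows that $f^{-1}$ is definable of class $\mathcal C^r$.

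I would expect the local surjectivity step to be the main obstacle. Over an arbitrary definably complete ordered field one cannot form the Cauchy limit of the iterates of the contraction $g$, so the existence of a solution to $f(x) = y$ must instead be extracted from order-theoretic completeness through the extreme value theorem, and some care is needed to guarantee that the minimizer is interior to $B$ rather than on its boundary.
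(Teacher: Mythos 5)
Your argument is correct and is essentially the proof the paper invokes: the paper simply cites \cite[Chapter 7, Theorem 2.11]{vdD} with the observation that the only completeness input there is the extreme value theorem, and that cited proof is precisely your minimization argument (injectivity via the mean value inequality, surjectivity by minimizing $\sum_i(f_i(x)-y_i)^2$ on a closed box and checking the minimizer is interior, then bootstrapping regularity of the inverse). You have correctly identified the surjectivity step as the point where definable completeness substitutes for Cauchy completeness, which is exactly the remark the author makes.
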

\begin{proof}
	A proof is found in \cite[Chapter 7, Theorem 2.11]{vdD} when the structure is o-minimal.
	Extreme value theorem is only used in the proof, therefore; this theorem holds for every expansion of an ordered field in which extreme value theorem holds.
	Extreme value theorem holds for every definably complete structures by \cite[Corollary(Max-min theorem)]{M}.
\end{proof}

We recall the definition of dimension and its basic properties.
\begin{definition}[Dimension]\label{def:dim}
	Consider an expansion of a dense linear order without endpoints.
	Let $F$ be the universe.
	We consider that $F^0$ is a singleton with the trivial topology.
	Let $X$ be a nonempty definable subset of $F^n$.
	The dimension of $X$ is the maximal nonnegative integer $d$ such that $\pi(X)$ has a nonempty interior for some coordinate projection $\pi:F^n \rightarrow F^d$.
	We set $\dim(X)=-\infty$ when $X$ is an empty set.
\end{definition}

\begin{proposition}\label{prop:dim}
	Consider a d-minimal expansion of an ordered field $\mathcal F=(F,<,+,\cdot,0,1,\ldots)$.
	The dimension function $\dim$ fulfills the conditions in Definition of \cite{vdD2}; that is,
	\begin{enumerate}
		\item[(1)] $\dim(S)= -\infty \Leftrightarrow S=\emptyset$; $\dim(\{x\})=0$ for all $x \in F$ and $\dim F=1$.
		\item[(2)] $\dim (S_1 \cup S_2) = \max\{\dim S_1, \dim S_2\}$ for any definable subsets of $F^n$.
		In particular, if the union $S_1 \cup S_2$ has a nonempty interior, at least one of $S_1$ and $S_2$ has a nonempty interior.
		\item[(3)] $\dim S^\sigma=\dim S$ for any definable set $S \subseteq F^n$ and any permutation $\sigma$ of $\{1,\ldots,n\}$.
		Here, $S^\sigma=\{(x_{\sigma(1)},\ldots, x_{\sigma(n)}) \in F^n\;|\; (x_1,\ldots, x_n) \in S\}$.
		\item[(4)] Let $T$ be a definable subset of $F^{n+1}$ and set $T_x=\{y \in F\;|\;(x,y) \in T\}$ for any $x \in F^n$.
		Set $T(i)=\{x \in F^n\;|\; \dim(T_x)=i\}$ for $i=0,1$.
		Then, $T(i)$ are definable and $\dim(\{(x,y) \in T\;|\; x \in T(i)\})=\dim T(i) +i$ for $i=0,1$.
	\end{enumerate}
\end{proposition}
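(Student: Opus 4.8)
The plan is to verify the four axioms in turn, treating (1) and (3) as immediate consequences of Definition \ref{def:dim} and concentrating the real work on (2) and (4). First I would record two elementary monotonicity facts forced by the definition: if $A \subseteq B$ then $\dim A \le \dim B$ (a coordinate projection witnessing a nonempty interior for $A$ does the same for $B$), and $\dim \pi(S) \le \dim S$ for every coordinate projection $\pi$ (a projection of a projection is again a coordinate projection, and the image of an open set under a coordinate projection is open). With these in hand, (1) is read off directly: a nonempty definable set has a well-defined nonnegative $d$, a singleton projects to a point which has nonempty interior only in $F^0$, and $F \subseteq F^1$ is its own largest open image. Axiom (3) holds because permuting coordinates merely permutes the set of coordinate projections, leaving the maximal admissible $d$ unchanged, and the inequality $\max\{\dim S_1, \dim S_2\} \le \dim(S_1\cup S_2)$ in (2) is monotonicity once more.

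The substance of (2) is the reverse inequality, equivalently the stated ``in particular'' clause: a union of two definable sets with empty interior again has empty interior. I would obtain this through the stronger assertion that a definable set with empty interior is nowhere dense, i.e.\ has empty-interior closure, proving it by induction on the ambient dimension. The base case in $F$ is exactly where d-minimality enters: a definable subset of $F$ with empty interior has an empty open part, hence is a finite union of discrete sets, and in a definably complete structure such a set is closed and discrete, therefore nowhere dense. The inductive step reduces a hypothetical open box inside the closure to the one-variable case fibrewise. Once ``empty interior $\Rightarrow$ nowhere dense'' is available, the elementary topological fact that the union of two closed nowhere-dense sets is nowhere dense closes (2).

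For (4) the definability of the sets $T(i)$ is routine, since ``$T_x$ has nonempty interior in $F$'' and ``$T_x \ne \emptyset$'' are first-order conditions on $x$. Writing $B_i = \{(x,y)\in T : x \in T(i)\}$, the upper bounds $\dim B_1 \le \dim T(1)+1$ and $\dim B_0 \le \dim T(0)$ follow from projection-monotonicity by splitting each witnessing coordinate projection according to whether it retains the last coordinate. The lower bound $\dim B_1 \ge \dim T(1)+1$ is obtained by taking a coordinate projection $\pi$ carrying $T(1)$ onto a set containing an open box $U \subseteq F^d$ and lifting $\pi$ by the last coordinate: one must produce an open box inside $U \times F$ lying in the image of $B_1$, for which it suffices to show that if every fibre over $U$ of a definable subset of $F^{d+1}$ has nonempty interior in $F$, then the set itself has nonempty interior. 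Symmetrically, the $i=0$ upper bound amounts to showing that a union, taken over the fibre of a coordinate projection, of empty-interior one-variable sets still has empty interior over a box.

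The main obstacle is precisely this demand for uniformity over a base box, and it is where the absence of the continuity property bites: one cannot argue, as in the o-minimal case, that a definable function selecting an interior point of each fibre is generically continuous and hence sweeps out an open box. Instead I would exploit the defining feature of d-minimality directly---the uniform bound on the number of discrete pieces of the one-variable fibres---together with the definably complete supremum and infimum operations to manufacture, on a suitable definable subbox, a single common interval (respectively, to control the finitely many discrete fibres), thereby forcing the required emptiness or nonemptiness of interior. Carrying out this uniformization, rather than any of the bookkeeping inequalities, is the heart of the proposition.
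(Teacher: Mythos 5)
Your outline is structurally sensible, but note first that the paper does not actually prove this proposition: its ``proof'' is a citation to \cite[Lemma 4.5]{Fornasiero} together with Miller's definable choice note, and the only thing argued on the page is the ``in particular'' clause of (2), which the paper deduces \emph{from} the formula (if $S_1\cup S_2$ has nonempty interior then $\dim(S_1\cup S_2)=n$, so some $S_i$ has dimension $n$, and the only coordinate projection of $F^n$ onto $F^n$ is the identity) --- the opposite direction from yours. So you are in effect volunteering to reprove the relevant part of Fornasiero's dimension theory, and there your proposal has real gaps. A small but genuine error first: a definable discrete subset of $F$ in a definably complete, even d-minimal, structure need not be closed --- $2^{\mathbb Z}$ in the d-minimal structure $(\mathbb R,<,+,\cdot,2^{\mathbb Z})$ accumulates at $0$. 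Nowhere-density of definable discrete sets still holds (if $\mycl(D)$ contained an interval $I$ then $D$ would be dense in $I$, hence meet $I$, and any point of $D\cap I$ has a punctured neighbourhood missing $D$, contradicting density), but the justification ``closed and discrete, therefore nowhere dense'' is not valid as written.

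More seriously, the two steps you defer are precisely the substance of the proposition, and neither is routine. The inductive step of ``empty interior $\Rightarrow$ nowhere dense'' in $F^n$ already requires knowing that a definable subset of $F^{d+1}$ all of whose fibres over an open box have empty interior in $F$ itself has empty interior, which is the same uniformization problem you face in (4); and in the $i=0$ case of (4) the fibres $T_x$ are unions of at most $N$ definable discrete sets but may each be \emph{infinite} (again $2^{\mathbb Z}$), so one cannot reduce to finitely many definable selection functions and apply Corollary~\ref{cor:dim1}(1) to their graphs --- one must genuinely exclude that an open box is swept out by a definably parametrized family of infinite dimension-zero sets. That is the hard content of \cite[Theorem 3.10]{Fornasiero}, and ``manufacture a common interval using sup and inf'' does not yet engage with it. Conversely, your stated obstacle for the lower bound in the $i=1$ case is misplaced: generic continuity of definable functions on open sets (Lemma~\ref{lem:dim2}) \emph{does} hold in d-minimal structures (what fails is only the small-dimensional discontinuity locus), so selecting an interior point $f(x)\in T_x$ and a radius $\rho(x)$ by definable choice and shrinking to a sub-box where both are continuous produces the required open box inside $T$. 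As it stands the proposal is an accurate map of where the difficulties lie rather than a proof of them.
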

\begin{proof}
	See \cite[Lemma 4.5]{Fornasiero} together with \cite{Miller-choice} other than the `in particular' part of assertion (2).
	The `in particular' part is obvious from the formula and the definition of dimension.
\end{proof}

\begin{corollary}\label{cor:dim1}
	Let $\mathcal F=(F,<,+,\cdot,0,1,\ldots)$ be as in Proposition \ref{prop:dim}.
	The following assertions hold:
\begin{enumerate}
	\item[(1)] Let $f:X \rightarrow F^n$ be a definable map. 
	We have $\dim(f(X)) \leq \dim X$.
	\item[(2)] Let $\varphi:X \rightarrow Y$ be a definable surjective map whose fibers are equi-dimensional; that is, the dimensions of the fibers $\varphi^{-1}(y)$ are constant.
	We have $\dim X = \dim Y + \dim \varphi^{-1}(y)$ for all $y \in Y$.  
\end{enumerate}
\end{corollary}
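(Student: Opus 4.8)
The plan is to derive both parts purely from the four axioms of Proposition \ref{prop:dim}, following van den Dries's abstract treatment \cite{vdD2}, so that everything reduces to computing dimensions of graphs and applying the fiber formula (4). First I would record two routine consequences. Monotonicity, $A \subseteq B \Rightarrow \dim A \leq \dim B$, is immediate from (2) because $\dim B = \dim(A \cup B) = \max\{\dim A, \dim B\}$. Secondly, a coordinate projection cannot raise dimension: for $S \subseteq F^{k+1}$ and $q \colon F^{k+1} \to F^k$ the projection forgetting the last coordinate, writing $S(i) = \{x \in F^k : \dim S_x = i\}$, formula (4) gives $\dim S = \max\{\dim S(0), \dim S(1) + 1\}$ while $q(S) = S(0) \cup S(1)$, so $\dim q(S) = \max\{\dim S(0), \dim S(1)\} \leq \dim S$; iterating and using (3) to choose the dropped coordinate gives $\dim \rho(S) \leq \dim S$ for every coordinate projection $\rho$.

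The step I expect to be the main obstacle is promoting (4) from one fiber coordinate to arbitrarily many. Concretely, for $T \subseteq F^n \times F^k$ with $T(i) = \{x \in F^n : \dim T_x = i\}$, I would prove $\dim\{(x,y) \in T : x \in T(i)\} = \dim T(i) + i$, and hence $\dim T = \max_i\{\dim T(i) + i\}$, by induction on $k$: split off the last coordinate of $F^k$, apply (4) there, and feed the inductive hypothesis into the remaining coordinates, invoking Proposition \ref{prop:dim}(4) at each stage to keep the sets $T(i)$ definable. The care here lies in combining the maxima correctly and in tracking definability through the induction; since this is exactly the content of van den Dries's development, one could alternatively just cite \cite{vdD2}.

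With these tools both assertions are short. For (1), let $\Gamma_f = \{(x, f(x)) : x \in X\} \subseteq F^m \times F^n$ be the graph of $f$; each vertical fiber is the singleton $\{f(x)\}$, so the multivariable fiber formula gives $\dim \Gamma_f = \dim X$, and since $f(X)$ is the image of $\Gamma_f$ under the projection onto the last $n$ coordinates, the projection estimate yields $\dim f(X) \leq \dim \Gamma_f = \dim X$. For (2), with $X \subseteq F^m$ and $Y \subseteq F^\ell$, set $\Gamma_\varphi = \{(x, \varphi(x)) : x \in X\} \subseteq F^m \times F^\ell$, so again $\dim \Gamma_\varphi = \dim X$. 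Permuting coordinates via (3) so that the $F^\ell$-block comes first, the fiber of $\Gamma_\varphi$ over $y$ is exactly $\varphi^{-1}(y)$, of constant dimension $e := \dim \varphi^{-1}(y)$, nonempty precisely for $y \in Y$ by surjectivity; hence the fiber formula collapses to a single term and gives $\dim \Gamma_\varphi = \dim Y + e$. Combining, $\dim X = \dim Y + \dim \varphi^{-1}(y)$, as required.
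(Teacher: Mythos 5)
Your proposal is correct and follows the same route as the paper, which simply cites van den Dries \cite{vdD2} for the fact that both assertions are formal consequences of the axioms in Proposition \ref{prop:dim}; you have just written out the derivation (graph of the map, multivariable fiber formula, projections do not raise dimension) that the citation encapsulates. The details you supply — monotonicity from axiom (2), the projection estimate from axiom (4), and the induction promoting (4) to several fiber coordinates — are all sound.
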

\begin{proof}
	The assertions (1) and (2) hold by \cite{vdD2} when the dimension function fulfills conditions in Definition of \cite{vdD2}.
	The corollary follows from Proposition \ref{prop:dim}.  
\end{proof}

We collect basic but important assertions on d-minimal structures from \cite{Fornasiero}.

\begin{lemma}\label{lem:dim1}
	Let $\mathcal F=(F,<,+,\cdot,0,1,\ldots)$ be as in Proposition \ref{prop:dim}.
	Let $X$ be a definable subset of $F^n$ of dimension $d$ and let $\pi:F^n \to F^d$ be a coordinate projection such that $\pi(X)$ has a nonempty interior.
	Then there exists a definable dense open subset $U$ of $F^d$ such that 
	$\pi^{-1}(x) \cap X$ is of dimension $\leq 0$ and the equality $\mycl_{F^n}(X) \cap \pi^{-1}(x) = \mycl_{F^n}(X \cap \pi^{-1}(x)) $ holds for each $x \in U$. 
\end{lemma}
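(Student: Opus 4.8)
The plan is to treat the two assertions --- generic $0$-dimensionality of the fibers and compatibility of closure with the fibration --- separately, find for each a definable dense open subset of $F^d$ on which it holds, and intersect. After permuting coordinates (permissible by Proposition \ref{prop:dim}(3), since coordinate permutations are homeomorphisms and preserve both dimension and closures) I may assume $\pi$ is the projection onto the first $d$ coordinates, write $F^n=F^d\times F^{n-d}$ and $X_x=\{y\in F^{n-d}\;|\;(x,y)\in X\}$, and note that $\pi^{-1}(x)\cap X=\{x\}\times X_x$ with $\mycl_{F^n}(X\cap\pi^{-1}(x))=\{x\}\times\mycl_{F^{n-d}}(X_x)$, because $\{x\}$ is closed.

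For the fiber-dimension assertion I would stratify the base by fiber dimension. Set $B_j=\{x\in F^d\;|\;\dim X_x=j\}$; these sets are definable by the dimension theory derived from the axioms in \cite{vdD2}. Restricting $\pi$ to $X\cap(B_j\times F^{n-d})$ gives a surjection onto $B_j$ with equidimensional fibers of dimension $j$, so Corollary \ref{cor:dim1}(2) yields $\dim\left(X\cap(B_j\times F^{n-d})\right)=\dim B_j+j$. As this set is contained in $X$, its dimension is at most $d$, whence $\dim B_j\le d-j<d$ for every $j\ge 1$. Since $j$ ranges over the finite set $\{1,\dots,n-d\}$, Proposition \ref{prop:dim}(2) gives that $B:=\{x\;|\;\dim X_x\ge 1\}$ has dimension $<d$.

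The closure-compatibility assertion is the hard part, and is exactly where d-minimality (rather than definable completeness together with the continuity property) is needed. Introduce the fiberwise closure $\widehat X=\{(x,y)\;|\;y\in\mycl_{F^{n-d}}(X_x)\}$, which is definable and satisfies $X\subseteq\widehat X\subseteq\mycl_{F^n}(X)$ and $\mycl_{F^n}(\widehat X)=\mycl_{F^n}(X)$; with this notation the failure set is $\mathrm{Bad}:=\pi\bigl(\mycl_{F^n}(X)\setminus\widehat X\bigr)=\{x\;|\;\mycl_{F^n}(X)\cap\pi^{-1}(x)\ne\mycl_{F^n}(X\cap\pi^{-1}(x))\}$, so part two holds at $x$ precisely when $x\notin\mathrm{Bad}$. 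I would show that $\mathrm{Bad}$ is nowhere dense. The subtle point is that $\mathrm{Bad}$ need not have dimension $<d$: the continuity property fails in d-minimal structures, so when $X$ is the graph of a definable function $\mathrm{Bad}$ is essentially its discontinuity set, which can be $d$-dimensional. Hence one cannot argue by dimension alone and must instead invoke the genuinely topological tameness of d-minimal structures, namely that a definable function is continuous on a definable dense open subset of its domain, equivalently that the points of bad fiberwise behaviour are nowhere dense \cite{Fornasiero}. Working over the dense open set on which the fibers of both $X$ and $\mycl_{F^n}(X)$ are $0$-dimensional, one argues that every extra accumulation value appearing in $\mycl_{F^n}(X)\cap\pi^{-1}(x)$ is produced by a generically continuous selection whose limit already lies in $\mycl_{F^{n-d}}(X_x)$, forcing $\mathrm{Bad}$ to have empty interior.

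Finally I would assemble the set. Because $\dim B<d$ and closure does not raise dimension in this setting, $\mycl_{F^d}(B)$ has empty interior, so $B$ is nowhere dense; together with the nowhere density of $\mathrm{Bad}$, the union $E:=\mycl_{F^d}(B)\cup\mycl_{F^d}(\mathrm{Bad})$ is a closed definable set with empty interior. Then $U:=F^d\setminus E$ is a definable dense open set, and for every $x\in U$ one has $x\notin B$, so $\dim(\pi^{-1}(x)\cap X)=\dim X_x\le 0$, and $x\notin\mathrm{Bad}$, so $\mycl_{F^n}(X)\cap\pi^{-1}(x)=\mycl_{F^n}(X\cap\pi^{-1}(x))$, as required. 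The main obstacle is precisely the nowhere density of $\mathrm{Bad}$: it is the one place where the weaker, strictly topological form of generic continuity available in d-minimal structures must replace the dimension-theoretic continuity property used in the o-minimal and locally o-minimal arguments.
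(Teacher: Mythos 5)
The paper itself does not prove this lemma; it simply cites \cite[Theorem 3.10(8)]{Fornasiero}, so you are attempting an argument the text outsources. Your first half is correct and complete: the sets $B_j=\{x\in F^d\;|\;\dim X_x=j\}$ are definable by the fiberwise dimension theory of \cite{vdD2}, Corollary \ref{cor:dim1}(2) forces $\dim B_j\leq d-j<d$ for $j\geq 1$, and Lemma \ref{lem:dim4} makes $B$ nowhere dense. The reduction of the second assertion to the nowhere density of $\mathrm{Bad}=\pi\bigl(\mycl_{F^n}(X)\setminus\widehat X\bigr)$ is also sound, and you are right that one cannot simply invoke a frontier inequality $\dim(\mycl(X)\setminus X)<\dim X$ here: it fails in d-minimal structures (e.g.\ $X=(0,1)\times(2^{\mathbb Z}\cap(0,1])$ in $(\mathbb R,<,+,\cdot,2^{\mathbb Z})$ has $\dim\partial X=\dim X=1$).

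The genuine gap is the sentence beginning ``one argues that every extra accumulation value\ldots''. That sentence is the entire content of the hard direction, and no argument is actually given; as phrased it is not even clear what is claimed, since a point of $\mycl_{F^n}(X)\cap\pi^{-1}(x)$ arises as a limit from \emph{neighbouring} fibers and there is no a priori reason its value should be approximable within the fiber $X_x$ itself. The step can be closed as follows. Suppose $\mathrm{Bad}$ contains an open box $D$. By Lemma \ref{lem:definable_selection} choose a definable $\sigma:D\to F^{n-d}$ with $(x,\sigma(x))\in\mycl_{F^n}(X)\setminus\widehat X$ for all $x\in D$, and then (again by Lemma \ref{lem:definable_selection}, applied to $\{(x,t)\in D\times F\;|\;t>0,\ \{y\;|\;|y-\sigma(x)|<t\}\cap X_x=\emptyset\}$, which has full projection because $\sigma(x)\notin\mycl_{F^{n-d}}(X_x)$) a definable $\rho:D\to F$ with $\rho>0$ witnessing that the ball of radius $\rho(x)$ about $\sigma(x)$ misses $X_x$. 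Shrinking $D$ via Lemma \ref{lem:dim2} applied coordinatewise, assume $\sigma$ and $\rho$ are continuous. Then $W=\{(x,y)\;|\;x\in D,\ |y-\sigma(x)|<\rho(x)\}$ is open, contains the graph of $\sigma$, and is disjoint from $X$ by construction of $\rho$; but each $(x,\sigma(x))$ lies in $\mycl_{F^n}(X)$, so every neighbourhood of it must meet $X$ --- a contradiction. Without this tube argument (or an equivalent), your proposal establishes only the easy half of the lemma.
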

\begin{proof}
	See \cite[Theorem 3.10(8)]{Fornasiero}.
\end{proof}

\begin{lemma}\label{lem:dim2}
	Let $\mathcal F=(F,<,+,\cdot,0,1,\ldots)$ be as in Proposition \ref{prop:dim}.
	Let $U$ be a nonempty definable open subset of $F^n$ and $f:U \to F$ be a definable function.
	There exists a nonempty definable open subset $V$ of $U$ such that the restriction of $f$ to $V$ is continuous.
\end{lemma}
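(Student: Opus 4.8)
The plan is to pass to the graph of $f$ and exploit Lemma \ref{lem:dim1}. First I would reduce to the case that $f$ is bounded. The map $\phi\colon F\to(-1,1)$ given by $\phi(t)=t/(1+|t|)$ is a definable homeomorphism (it is strictly increasing, continuous, and has a continuous inverse), so a point is a continuity point of $f$ if and only if it is a continuity point of $\phi\circ f$. Replacing $f$ by $\phi\circ f$, I may therefore assume that $f$ takes values in the bounded interval $(-1,1)$, and I set $K=[-1,1]$, a closed and bounded subset of $F$.

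Next I consider the graph $\Gamma=\{(x,f(x))\;|\;x\in U\}\subseteq F^{n+1}$. The coordinate projection $\pi\colon F^{n+1}\to F^n$ onto the first $n$ coordinates restricts to a definable bijection $\Gamma\to U$ with singleton fibers, so Corollary \ref{cor:dim1} gives $\dim\Gamma=\dim U=n$, and $\pi(\Gamma)=U$ has nonempty interior. Lemma \ref{lem:dim1} then produces a definable dense open subset $U'$ of $F^n$ such that $\mycl_{F^{n+1}}(\Gamma)\cap\pi^{-1}(x)=\mycl_{F^{n+1}}(\Gamma\cap\pi^{-1}(x))$ for every $x\in U'$. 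For $x\in U$ the set $\Gamma\cap\pi^{-1}(x)$ is the single point $(x,f(x))$, so for every $x$ in the nonempty open set $V:=U\cap U'$ I obtain the key identity $\mycl_{F^{n+1}}(\Gamma)\cap(\{x\}\times F)=\{(x,f(x))\}$; that is, over points of $V$ the closure of the graph acquires no new point in the fibre.

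It then remains to deduce that $f$ is continuous at each $x_0\in V$, and this is where boundedness enters. Suppose $f$ were discontinuous at $x_0$. Then there is $\varepsilon>0$ such that the definable set $A=\{x\in U\;|\;|f(x)-f(x_0)|\ge\varepsilon\}$ satisfies $x_0\in\mycl_{F^n}(A)$, while $f(A)$ lies in the closed bounded set $K_\varepsilon=K\setminus(f(x_0)-\varepsilon,f(x_0)+\varepsilon)$. The definable set $\Gamma\cap(F^n\times K_\varepsilon)=\{(x,f(x))\;|\;x\in A\}$ is contained in $F^n\times K_\varepsilon$; since $K_\varepsilon$ is closed and bounded, the restriction of $\pi$ to $F^n\times K_\varepsilon$ is a closed map (a definable analogue of the tube lemma, resting on the extreme value theorem valid in definably complete structures). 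Hence $\pi\bigl(\mycl_{F^{n+1}}(\Gamma\cap(F^n\times K_\varepsilon))\bigr)$ is closed and contains $A$, so it contains $x_0$. Thus there is $y\in K_\varepsilon$ with $(x_0,y)\in\mycl_{F^{n+1}}(\Gamma)$, and $|y-f(x_0)|\ge\varepsilon$ forces $y\neq f(x_0)$, contradicting the key identity. Therefore $f$ is continuous on $V$.

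I expect the main obstacle to be this last paragraph: converting the purely fibrewise closure information supplied by Lemma \ref{lem:dim1} into genuine continuity. The closure identity by itself only controls the accumulation values of $f$ that actually exist in $F$; it does not a priori prevent $f$ from oscillating or escaping to infinity near $x_0$. The reduction to a bounded function, combined with the fact that projection along a bounded coordinate is a closed map for definable sets (which rests on definable completeness), is precisely what rules this out, and carefully verifying this closedness of the projection is the technical heart of the argument.
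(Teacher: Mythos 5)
Your argument is correct, but it takes a genuinely different route from the paper: the paper does not prove Lemma \ref{lem:dim2} at all, it simply cites \cite[Theorem 3.10(6)]{Fornasiero}, whereas you derive it from Lemma \ref{lem:dim1} (itself quoted from \cite[Theorem 3.10(8)]{Fornasiero}). The reduction to a bounded function, the passage to the graph, and the fibrewise closure identity over the dense open set $U'$ are all sound, and the final contradiction is clean: any $y$ with $(x_0,y)\in\mycl_{F^{n+1}}(\Gamma)$ must equal $f(x_0)$ by that identity, while your $y$ lies in $K_\varepsilon$. The one ingredient you invoke that is stated nowhere in the paper is the closedness of the projection $F^n\times K_\varepsilon\to F^n$ on definable closed sets; this is a genuine theorem about definably complete structures (definable closed bounded sets are ``definably compact'', so definable decreasing families of nonempty closed bounded sets have nonempty intersection, equivalently definable continuous images of closed bounded sets are closed and bounded; see \cite{M}), and you are right to single it out as the technical heart --- it is exactly where definable completeness enters, and without the initial reduction to a bounded function the argument would indeed collapse, since $f$ could escape to infinity along a sequence approaching $x_0$ without producing any accumulation point in the closure of the graph. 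What your approach buys is a self-contained derivation of generic continuity from the fibrewise closure statement, at the cost of importing this auxiliary compactness machinery; what the paper's citation buys is brevity.
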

\begin{proof}
	See \cite[Theorem 3.10(6)]{Fornasiero}.
\end{proof}

\begin{lemma}\label{lem:dim3}
	Let $r$ be a positive integer.
	Let $\mathcal F=(F,<,+,\cdot,0,1,\ldots)$ be as in Proposition \ref{prop:dim}.
	Let $U$ be a nonempty definable open subset of $F^n$ and $f:U \to F$ be a definable function.
	There exists a definable closed subset $D$ of $U$ having an empty interior such that $f$ is of class $\mathcal C^r$ on $U \setminus D$.
\end{lemma}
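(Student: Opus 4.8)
The plan is to reduce the statement to a one–variable, first–order ($r=1$) differentiability fact and to isolate that as the genuine difficulty. First I would observe that it suffices to prove the \emph{local} version: every definable function on a nonempty open set is of class $\mathcal C^r$ on some nonempty open subset. Indeed, granting this, the set $\Omega=\{x\in U : f \text{ is } \mathcal C^r \text{ on some open box about } x\}$ is definable (being $\mathcal C^r$ on a box is an $\varepsilon$–$\delta$ condition, hence first order) and open, and applying the local version to every nonempty open $V\subseteq U$ shows $\Omega\cap V\neq\emptyset$; hence $\Omega$ is dense. Then $D:=U\setminus\Omega$ is definable, closed, and has empty interior, while $f$ is $\mathcal C^r$ on $U\setminus D=\Omega$. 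For subsets of $F^n$, ``empty interior'' means exactly ``dimension $<n$'' by Definition \ref{def:dim}, so $D$ is as required.

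Next I would reduce from general $r$ to $r=1$ by induction. By Lemma \ref{lem:dim2}, through the same $\Omega$–construction, $f$ is continuous on a dense open set, which is the base of the induction. Assuming $f$ is already $\mathcal C^{r-1}$ on a dense open set, each partial derivative $\partial^\alpha f$ with $|\alpha|=r-1$ is a definable continuous function there; if every such $\partial^\alpha f$ is $\mathcal C^1$ on a common dense open set (a finite intersection of dense open sets), then $f$ is $\mathcal C^r$ on it, the equality of mixed partials being supplied by the mean value theorem. Here I use that the extreme value theorem holds in definably complete expansions of ordered fields, exactly as in the proof of Theorem \ref{thm:inverse}, which yields Rolle's and hence the mean value theorem. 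So everything comes down to the case $r=1$.

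For $r=1$, the $\mathcal C^1$ criterion ``existence and continuity of all partials implies differentiability with that differential'' again follows from the mean value theorem, and continuity of a partial, once it exists, is furnished by Lemma \ref{lem:dim2}. Thus the matter reduces to the \emph{existence} of each $\partial_i f$ on a set of full dimension. Fixing the remaining coordinates turns this into a one–variable question; then Proposition \ref{prop:dim}(4), the fibrewise dimension formula, together with Corollary \ref{cor:dim1}, lets me assemble ``the one–variable slices are generically differentiable'' into ``$\partial_i f$ exists off a definable set of dimension $<n$'', with Lemma \ref{lem:dim1} used to pass between the ambient non–existence locus and its generic fibres, and the $\Omega$–construction used to extract an open box on which all the $\partial_i f$ exist and are continuous simultaneously.

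The hard part is therefore the one–variable statement: the set of points at which a continuous definable $g\colon I\to F$ fails to be differentiable has empty interior; once this is known, the derivative is a definable function on the resulting dense open set and Lemma \ref{lem:dim2} upgrades $g$ to class $\mathcal C^1$ on a dense open subset. This is precisely where d–minimality, rather than mere definable completeness, is essential: devil's–staircase phenomena show that a continuous definable function need not be monotone or $\mathcal C^1$ on any subinterval in weaker settings, so one cannot route through a monotonicity theorem. My plan is to study the difference quotient $\Phi(x,y)=(g(y)-g(x))/(y-x)$ as a definable function of two variables, make it continuous on a dense open subset of $\{x\neq y\}$ via Lemma \ref{lem:dim2}, and control its behaviour as $y\to x$ along generic slices using Lemma \ref{lem:dim1}; the one–sided upper and lower Dini derivatives of $g$ are definable by definable completeness, and the crux is that the definable locus where they fail to be finite and to coincide cannot contain a nonempty open interval. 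I expect this last point to be the main obstacle, and it is exactly here that the defining property of d–minimality—every definable subset of $F$ is an open set together with a uniformly bounded finite number of discrete sets—must be invoked to rule out a nowhere–differentiable locus of nonempty interior. The remaining reductions are routine given the preliminaries.
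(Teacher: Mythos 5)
Your chain of reductions is essentially sound: localizing via the definable open set $\Omega$ of points near which $f$ is $\mathcal C^r$, inducting on $r$ via Lemma \ref{lem:dim2} and the mean value theorem (available by definable completeness, as in the proof of Theorem \ref{thm:inverse}), and using Proposition \ref{prop:dim}(4) to pass from one-variable slices to the full non-differentiability locus are all workable steps. But the proposal has a genuine gap at exactly the point you flag: the one-variable claim that the set of non-differentiability points of a continuous definable $g\colon I\to F$ has empty interior is never proved, only described as a plan. That claim is not a ``remaining reduction''; it is the entire content of the lemma. (For comparison, the paper does not prove Lemma \ref{lem:dim3} internally either --- it cites Fornasiero's Lemma 3.14, which is precisely where this one-variable generic differentiability is established for d-minimal structures.)

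Concretely, the sketch you give for the crux does not close it. Applying Lemma \ref{lem:dim2} to the difference quotient $\Phi(x,y)=(g(y)-g(x))/(y-x)$ yields continuity on a dense open subset of $\{x\neq y\}$, but that gives no control near the diagonal, which is where the limit $y\to x$ must be evaluated; similarly, Lemma \ref{lem:dim1} controls generic fibres of a projection and does not by itself constrain diagonal behaviour. The Dini derivatives are definable and exist in $F\cup\{\pm\infty\}$ by definable completeness, but a priori they could be infinite or fail to coincide on a set with nonempty interior, and excluding this requires a genuine argument using d-minimality for definable subsets of $F^2$ (or uniform finiteness in definable families), not merely the defining condition on subsets of $F$. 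Until that step is supplied --- for instance by reproducing the argument of \cite[Lemma 3.14]{Fornasiero} or the analogous result of Miller over the reals --- the proposal is a reduction of the lemma to an unproved statement of essentially the same depth.
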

\begin{proof}
	See \cite[Lemma 3.14]{Fornasiero}.
\end{proof}

\begin{lemma}\label{lem:dim4}
	Let $\mathcal F=(F,<,+,\cdot,0,1,\ldots)$ be as in Proposition \ref{prop:dim}.
	We have $\dim \mycl(X) = \dim X$ for each definable set $X$.
\end{lemma}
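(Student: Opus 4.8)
The plan is to prove the two inequalities separately. Since $X \subseteq \mycl(X)$, monotonicity of dimension — an immediate consequence of part (2) of Proposition \ref{prop:dim} — gives $\dim X \le \dim \mycl(X)$. The substance is the reverse inequality $\dim \mycl(X) \le \dim X$, which I would establish by induction on the ambient dimension $n$, writing $X \subseteq F^n$ and $d := \dim X$. For the base case $n=1$ the only nontrivial situation is $d=0$: here $X$ has empty interior, so by d-minimality $X$ is a finite union of discrete definable sets, and $\mycl(X)$ is the corresponding finite union of their closures. Thus it suffices to check that the closure of a discrete set $D \subseteq F$ has empty interior. If some interval $(a,b)$ were contained in $\mycl(D)$, then $D$ would be dense in $(a,b)$; picking $d_0 \in D \cap (a,b)$ and using that $d_0$ is isolated in $D$ produces a nonempty subinterval of $(a,b)$ containing no point of $D$, contradicting density. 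This step is purely order-topological and needs no field structure.

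For the inductive step I assume the statement in all ambient dimensions below $n$ and split according to the value of $d$. If $d \le n-2$, I would argue by projection: for every coordinate projection $\sigma \colon F^n \to F^{d+1}$ we have $\dim \sigma(X) \le d$ by Corollary \ref{cor:dim1}(1), hence $\dim \mycl(\sigma(X)) \le d$ by the induction hypothesis (applicable since $d+1 < n$); as $\sigma$ is continuous, $\sigma(\mycl(X)) \subseteq \mycl(\sigma(X))$ has empty interior in $F^{d+1}$. Since this holds for every such $\sigma$, no coordinate projection of $\mycl(X)$ to $F^{d+1}$ can have nonempty interior, whence $\dim \mycl(X) \le d$ by the definition of dimension.

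The remaining and decisive case is $d = n-1$, which is where the genuine d-minimal input enters and which I expect to be the main obstacle. After permuting coordinates via Proposition \ref{prop:dim}(3), I would choose the projection $\rho \colon F^n \to F^{n-1}$ off the last coordinate so that $\rho(X)$ has nonempty interior, and apply Lemma \ref{lem:dim1} to $X$ with this $\rho$. This yields a dense open $U \subseteq F^{n-1}$ such that, for $x \in U$, the fibre $X_x := \{t \in F : (x,t) \in X\}$ has dimension $\le 0$ and $\mycl(X)_x = \mycl(X_x)$; by the base case the latter then has empty interior in $F$. Writing $Z = \mycl(X)$ and $Q_i = \{x \in F^{n-1} : \dim Z_x = i\}$ as in Proposition \ref{prop:dim}(4), this confines $Q_1$ to $F^{n-1} \setminus U$, so $\dim Q_1 \le n-2$, while $Q_0 \subseteq \rho(Z) \subseteq \mycl(\rho(X))$ gives $\dim Q_0 \le n-1$ through the induction hypothesis together with Corollary \ref{cor:dim1}(1).

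The fibre dimension formula of Proposition \ref{prop:dim}(4) then yields $\dim Z = \max\{\dim Q_0,\ \dim Q_1 + 1\} \le n-1 = d$, which closes the induction. The crux is the commutation $\mycl(X)_x = \mycl(X_x)$ for generic $x$ supplied by Lemma \ref{lem:dim1}: it forces the fibres of $\mycl(X)$ that jump up to dimension $1$ to sit over a base set of dimension $\le n-2$, and this is exactly what prevents the frontier from raising the dimension. I expect everything outside this case to be routine, while this slicing argument carries the whole weight of the proof.
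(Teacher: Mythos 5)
Your proof is correct, but it is genuinely different from what the paper does: the paper offers no argument at all for Lemma \ref{lem:dim4}, simply citing it as part (5) of Theorem 3.10 of Fornasiero's preprint, whereas you derive it from the other imported facts. Your derivation holds up: monotonicity from Proposition \ref{prop:dim}(2) gives one inequality; the base case correctly exploits that a definable subset of $F$ with empty interior is a finite union of discrete sets (the open part of the d-minimal decomposition being forced to be empty) and that the closure of a discrete set in a dense order is nowhere dense; the case $d\le n-2$ reduces correctly via continuity of projections and the fact that a projection to $F^e$ with nonempty-interior image induces one to $F^{d+1}$; and the codimension-one case is handled exactly right by the fibrewise commutation $\mycl(X)\cap\rho^{-1}(x)=\mycl(X\cap\rho^{-1}(x))$ over the dense open $U$ from Lemma \ref{lem:dim1}, which pushes $Q_1$ into the nowhere dense complement of $U$ so that the fibre-dimension formula of Proposition \ref{prop:dim}(4) closes the estimate. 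Two trivial omissions (the cases $X=\emptyset$ and $d=n$, and the purely topological fact that a finite union of closed nowhere dense sets is nowhere dense) are harmless, and your observation that $\dim Q_0\le n-1$ actually needs no induction hypothesis at all. What your approach buys is a self-contained proof inside the paper from Lemma \ref{lem:dim1}, Proposition \ref{prop:dim} and the one-variable d-minimality axiom; what it costs is a dependency worth flagging: Lemma \ref{lem:dim1} and Lemma \ref{lem:dim4} are parts (8) and (5) of the \emph{same} theorem in the cited source, so whether your argument is genuinely independent of the original one depends on the order of development there, even though it is perfectly legitimate given the paper's decision to take Lemma \ref{lem:dim1} as a black box.
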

\begin{proof}
	See \cite[Theorem 3.10(5)]{Fornasiero}.
\end{proof}

\begin{lemma}\label{lem:dmin_char}
	Let $\mathcal F=(F,<,+,\cdot,0,1,\ldots)$ be as in Proposition \ref{prop:dim}.
	Let $\pi:F^{m+n} \to F^m$ be a coordinate projection and $X$ be a definable subset of $F^{m+n}$.
	There exists a positive integer $N$ such that, for each $x \in F^m$, either $\dim (X \cap \pi^{-1}(x))>0$ or $X \cap \pi^{-1}(x)$ is a union of at most $N$ many definable discrete sets.
\end{lemma}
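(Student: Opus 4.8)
The plan is to induct on $n$, the dimension of the fibers. Fix $x$ with $\dim(X\cap\pi^{-1}(x))\le 0$ (there is nothing to prove otherwise) and write $X_x=X\cap\pi^{-1}(x)$. Since coordinate permutations preserve dimension by Proposition \ref{prop:dim}(3) and are homeomorphisms (hence preserve discreteness), I may assume $\pi$ is the projection onto the first $m$ coordinates, so $X\subseteq F^m\times F^n$ and $X_x=\{y\in F^n:(x,y)\in X\}$. For a definable set $S$ let $\Lambda(S)$ denote the set of its non-isolated points; this is definable uniformly from a defining formula of $S$, and $S\setminus\Lambda(S)$ (the isolated points) is discrete. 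If the $N$-th iterate satisfies $\Lambda^{N}(S)=\emptyset$, then $S=\bigcup_{j=0}^{N-1}\bigl(\Lambda^{j}(S)\setminus\Lambda^{j+1}(S)\bigr)$ exhibits $S$ as a union of at most $N$ definable discrete sets; conversely a union of at most $N$ discrete sets is annihilated by $\Lambda^{N}$ (a standard property of scattered sets). Thus it suffices to bound, uniformly in $x$, the least $N$ with $\Lambda^{N}(X_x)=\emptyset$.

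First I would treat the base case $n=1$, where $X_x\subseteq F$. By the definition of d-minimality the fiber is the union of an open set $O_x$ and at most $N_0$ discrete sets, with $N_0$ independent of $x$ by the uniform-finiteness clause. A nonempty open subset of $F$ has dimension $1$, whereas $\dim X_x\le 0$, so $O_x=\emptyset$ and $X_x$ is a union of at most $N_0$ definable discrete sets. This settles $n=1$.

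For the inductive step I would split off the last fiber coordinate: let $\rho:F^n\to F^{n-1}$ forget it, put $\tilde\rho=\mathrm{id}\times\rho:F^m\times F^n\to F^m\times F^{n-1}$, and set $X'=\tilde\rho(X)$. The fiber $(X')_x=\rho(X_x)$ has dimension $\le\dim X_x\le 0$ by Corollary \ref{cor:dim1}(1), so the induction hypothesis applied to $X'$ yields definable discrete sets $D_1,\dots,D_{N_1}\subseteq F^{n-1}$ with $\rho(X_x)=\bigcup_i D_i$, where $N_1$ is uniform. For each $z\in\rho(X_x)$, the one-dimensional fiber $(X_x)_z=\{t\in F:(z,t)\in X_x\}$ satisfies $\{z\}\times(X_x)_z\subseteq X_x$, hence $\dim(X_x)_z\le 0$; applying the base case to $X$ regarded as a family over $F^{m+n-1}$ gives a uniform $N_0$ with $\Lambda^{N_0}\bigl((X_x)_z\bigr)=\emptyset$, so the pieces $E_j(z)=\Lambda^{j}((X_x)_z)\setminus\Lambda^{j+1}((X_x)_z)$, $0\le j<N_0$, are discrete, cover $(X_x)_z$, and are definable uniformly in $z$ because $\Lambda$ is. I then recombine: for $1\le i\le N_1$ and $0\le j<N_0$ put $G_{i,j}=\{(z,t):z\in D_i,\ t\in E_j(z)\}$. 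Each $G_{i,j}$ is definable and discrete, for if $(z_0,t_0)\in G_{i,j}$ I choose open $U\ni z_0$ with $U\cap D_i=\{z_0\}$ and open $V\ni t_0$ with $V\cap E_j(z_0)=\{t_0\}$, whence $(U\times V)\cap G_{i,j}=\{(z_0,t_0)\}$. Since $X_x=\bigcup_{i,j}G_{i,j}$, the fiber is a union of at most $N:=N_0N_1$ definable discrete sets, and $N$ depends only on $X$.

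The main obstacle is not the topological recombination — that is the elementary isolation argument above — but securing uniformity and definability at the same time. Uniformity is produced solely by the uniform-finiteness clause in the definition of d-minimality; it enters in the base case and propagates through the induction via the product $N_0N_1$. Definability is the delicate point: an arbitrary discrete decomposition of each one-dimensional fiber $(X_x)_z$ need not vary definably in $z$, which would destroy the definability of the recombined sets $G_{i,j}$. I circumvent this by using the canonical decomposition through iterated removal of isolated points, which is uniformly definable; the only fact one must know is that this process terminates within the uniform number of steps granted by d-minimality, i.e. that a union of at most $k$ discrete sets is annihilated by $\Lambda^{k}$.
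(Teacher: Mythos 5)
Your proof is correct, but note that the paper itself gives no argument here: its ``proof'' is simply the citation \cite[Lemma 5.10]{Fornasiero}, so the comparison is between a deferral to the literature and a self-contained argument. Your route --- induction on the number of fibre coordinates, with the base case extracted directly from the uniform-finiteness clause in the definition of d-minimality and the inductive step recombining the decomposition of $\rho(X_x)$ supplied by the induction hypothesis with the canonical decompositions of the one-dimensional fibres $(X_x)_z$ --- is the natural elementary proof, and the details check out: the operator $\Lambda$ (the paper's $\myLpt$) is uniformly definable, each $G_{i,j}$ is discrete by your two-neighbourhood isolation argument, the dimension bookkeeping via Corollary \ref{cor:dim1}(1) is right, and the bound $N_0N_1$ depends only on a defining formula of $X$. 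What your approach buys is independence from the unpublished reference; what it costs is that you must actually justify the one topological fact you invoke without proof, namely that a union of at most $k$ discrete sets is annihilated by $\Lambda^{k}$. That fact carries real weight in your argument --- it is precisely what converts the possibly non-definable discrete pieces handed to you by the d-minimality axiom into the uniformly definable layers $\Lambda^{j}(S)\setminus\Lambda^{j+1}(S)$ --- and it is true, with a short proof you should include: $\Lambda$ localizes to open sets, i.e.\ $\Lambda(A)\cap W=\Lambda(A\cap W)$ for $W$ open; so if $S=D_1\cup\dots\cup D_k$ with the $D_i$ disjoint and discrete and some $p\in\Lambda^{k}(S)$ lies in $D_{i_0}$, choose an open $U\ni p$ with $U\cap D_{i_0}=\{p\}$, observe that $S\cap(U\setminus\{p\})$ is a union of $k-1$ discrete sets, conclude by induction that $\Lambda^{k-1}(S)\cap(U\setminus\{p\})=\emptyset$, and contradict the assumption that $p$ is a non-isolated point of $\Lambda^{k-1}(S)$. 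With that lemma supplied, your argument is a complete proof of the statement.
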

\begin{proof}
	See \cite[Lemma 5.10]{Fornasiero}.
\end{proof}

The following definable choice lemma is also used:
\begin{lemma}\label{lem:definable_selection}
	Consider a d-minimal expansion of an ordered group whose universe is $F$.
	Let $\pi:F^n \to F^m$ be a coordinate projection and $X$ be a definable subset of $F^n$.
	Then there exists a definable map $\varphi:\pi(X) \to X$ such that the composition $\pi \circ \varphi$ is the identity map on $\pi(X)$.
\end{lemma}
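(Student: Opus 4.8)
The plan is to reduce to the one-variable case and then make a uniform definable choice from nonempty definable subsets of $F$. First I would reduce to the situation where $\pi$ forgets a single coordinate: writing a coordinate projection $F^n \to F^m$ as a composition of projections each forgetting one coordinate, a definable section of $\pi$ over $\pi(X)$ is obtained by composing definable sections of the one-step projections over the appropriate images. After relabeling the coordinates it therefore suffices to treat $\pi:F^{m+1}\to F^m$, the projection onto the first $m$ coordinates, and a definable $X\subseteq F^{m+1}$ with nonempty fibers $X_x=\{y\in F:(x,y)\in X\}$ for $x\in\pi(X)$. The goal becomes a definable map $x\mapsto\psi(x)\in X_x$, since then $\varphi(x)=(x,\psi(x))$ works.

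Next I would split the base $\pi(X)$ according to the shape of the fiber. By Lemma \ref{lem:dmin_char} there is an integer $N$ such that for every $x$ the fiber $X_x\subseteq F$ either has nonempty interior or is a union of at most $N$ discrete sets. Accordingly set $A=\{x\in\pi(X):\myint(X_x)\neq\emptyset\}$ and $B=\pi(X)\setminus A$; both are definable, and it is enough to produce definable choice functions on $A$ and on $B$ separately and glue them, the two pieces being disjoint. Everything thus reduces to one base-case problem, uniform in parameters: given a nonempty definable $S\subseteq F$, select canonically and definably a point $c(S)\in S$.

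For the base case I would use definable completeness and the extreme value theorem (valid in definably complete structures, as in the proof of Theorem \ref{thm:inverse}). The first candidate is the point of $\mycl(S)$ nearest to $0$: the function $y\mapsto|y|$ attains a minimum on the nonempty closed bounded set $\mycl(S)\cap[-r,r]$ for suitable $r$, so $a=\inf\{|y|:y\in\mycl(S)\}$ is attained; break the two-fold ambiguity $\{a,-a\}$ by taking the larger, yielding a canonical $p\in\mycl(S)$. This device is insensitive to unboundedness of $S$. If $p\in S$ I output $p$. The remaining case is $p\in\mycl(S)\setminus S$, where $p$ is a limit point of $S$ from at least one side; here I would descend into $S$ near $p$, using that on $B$ the set $S$ is a union of at most $N$ discrete sets and hence has finite Cantor--Bendixson rank, so a bounded iteration of a "nearest frontier point / isolated point" operation reaches an actual point of $S$. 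On $A$, after passing to $\myint(S)$ one is choosing a point in a component interval: a bounded two-sided component yields its midpoint, while an unbounded component is handled by translating the relevant endpoint by a fixed positive parameter.

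The main obstacle is precisely this base-case choice: making it simultaneously canonical, definable, and uniform in parameters while coping with three independent difficulties — infima that are not attained, sets that are unbounded (so that no "leftmost" point exists), and arbitrarily short component intervals in the interior case, where selecting an interior point tacitly uses divisibility (midpoints) rather than merely the ordered-group operations. The nearest-point-to-$0$ construction disposes of unboundedness and, via the extreme value theorem, of non-attainment within the closed set, and the finite Cantor--Bendixson rank furnished by Lemma \ref{lem:dmin_char} forces the descent to terminate in a definable number of steps. Alternatively, once the reduction to the one-variable case is in place, one may simply invoke the definable-choice result for these structures (the same \cite{Miller-choice} used in the proof of Proposition \ref{prop:dim}) to conclude.
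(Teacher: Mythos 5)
The paper's own ``proof'' of this lemma is nothing but the citation of \cite{Miller-choice}, so the fallback in your closing sentence is, verbatim, the paper's entire argument (and since that reference proves the full multi-variable statement, even your reduction to one coordinate is unnecessary on that route). The substance of your proposal is therefore the self-contained construction, and that is where there is a genuine gap. The reductions are fine: composing one-step sections, passing to fibers, and splitting $\pi(X)$ via Lemma \ref{lem:dmin_char} into the locus where $\myint(X_x)\neq\emptyset$ and the locus where $X_x$ is a union of at most $N$ discrete sets. But these steps concentrate all of the difficulty into the base case ``definably select a point of a nonempty definable $S\subseteq F$,'' and your treatment of that case does not go through.

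Concretely, the distinguished point $p$ of $\mycl(S)$ nearest to $0$ may lie in $\mycl(S)\setminus S$, and the proposed descent never re-enters $S$. Take $S$ definable, discrete, contained in $(0,1)$, accumulating exactly at $0$ and at $1$ and nowhere else, with neither endpoint in $S$: then $p=0\notin S$, the infimum $\inf\{|s-p| : s\in S\}$ is not attained, $\myIso(S)=S$ (so ``nearest isolated point'' is the same unattained infimum), and the Cantor--Bendixson derivatives move \emph{away} from $S$ rather than into it --- here $\myLpt(\mycl(S))=\{0,1\}$ is already disjoint from $S$, and the iteration terminates without ever producing an element of $S$. The interior case has the same defect: $\myint(S)$ may have infinitely many components, none leftmost and none adjacent to $p$, so ``a bounded two-sided component yields its midpoint'' presupposes a canonical component that your construction does not supply. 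What is missing is a selection device for definable discrete sets that are neither closed nor possessed of an extremal element; one workable such device sets $\rho(d)=\inf\{|d-d'|\;|\;d'\in D,\ d'\neq d\}>0$ and $\sigma=\sup_{d\in D}\rho(d)$, and passes to $E=\{d\in D\;|\;\rho(d)>\sigma/2\}$ (divisibility of the group follows from definable completeness): $E$ is nonempty and uniformly discrete, hence closed, and only then does your extreme-value argument apply to yield a nearest point that actually lies in $E\subseteq D$. One also needs the reductions from a finite union of discrete sets to a single discrete set (via $\myIso$, which is nonempty by finiteness of the rank) and from the open case to the discrete case (via the set of finite left endpoints of components). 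Without some such ingredients the outline is not a proof; with them it would be an honest expansion of what the paper merely cites.
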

\begin{proof}
	See \cite{Miller-choice}.
\end{proof}

An assertion similar to the following proposition is found in \cite[Theorem 3.11]{Fuji_tame} when the structure is definably complete and locally o-minimal.
The following proposition is proven similarly to \cite[Theorem 3.11]{Fuji_tame}, but we give a complete proof here for readers' convenience.

\begin{proposition}\label{prop:dim2}
	Let $\mathcal F=(F,<,+,\cdot,0,1,\ldots)$ be as in Proposition \ref{prop:dim}.
	A definable set $X$ is of dimension $d$ if and only if the nonnegative integer $d$ is the maximum of nonnegative integers $e$ such that there exist an open box $B$ in $F^e$ and 
	a definable injective continuous map $\varphi:B \rightarrow X$ homeomorphic onto its image. 
\end{proposition}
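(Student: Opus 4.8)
The plan is to let $d^\ast$ denote the maximum in the statement and to prove $\dim X = d^\ast$ via the two inequalities $d^\ast \le \dim X$ and $\dim X \le d^\ast$. The first inequality shows that every admissible $e$ (that is, every $e$ for which an open box $B$ in $F^e$ and a definable homeomorphic embedding $\varphi:B \to X$ exist) is bounded above by $\dim X$, while the construction behind the second inequality exhibits an admissible $e$ equal to $\dim X$; together these guarantee that the maximum is well defined and equal to $\dim X$.

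For the inequality $d^\ast \le \dim X$ I would start from an admissible $e$, i.e.\ an open box $B$ in $F^e$ together with a definable injective continuous $\varphi:B \to X$ that is a homeomorphism onto its image. Since $\varphi$ is definable, so is its inverse $\varphi^{-1}:\varphi(B) \to B$, and this inverse is continuous by hypothesis. The image of $\varphi^{-1}$ is all of $B$, so Corollary \ref{cor:dim1}(1) gives $e = \dim B = \dim \varphi^{-1}(\varphi(B)) \le \dim \varphi(B)$, and $\dim \varphi(B) \le \dim X$ because $\varphi(B) \subseteq X$. Here $\dim B = e$ holds because the projection of the open box $B$ onto $F^e$ is $B$ itself, which has nonempty interior. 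Hence $e \le \dim X$ for every admissible $e$, and in particular $d^\ast \le \dim X$.

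The reverse inequality is the substantive part: given $d = \dim X$ I must produce an admissible embedding with $e = d$. By the definition of dimension there is a coordinate projection $\pi:F^n \to F^d$ with $\pi(X)$ having nonempty interior, so $\pi(X)$ contains an open box. Definable choice (Lemma \ref{lem:definable_selection}) supplies a definable section $s:\pi(X) \to X$ with $\pi \circ s = \mathrm{id}$, and this $s$ is automatically injective. The difficulty is that $s$ need not be continuous, and in the d-minimal setting there is no cell decomposition available to repair this globally.

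To overcome this I would apply the generic continuity lemma (Lemma \ref{lem:dim2}) coordinate by coordinate to the $n$ component functions of $s$: starting from an open box inside $\pi(X)$, shrink successively to nonempty definable open subsets on which $s_1,\dots,s_n$ become continuous one at a time, and finally choose an open box $B$ inside the resulting open set. On $B$ the restriction $\varphi = s|_B$ is definable, continuous and injective, and its inverse is the restriction of the coordinate projection $\pi$ to $\varphi(B)$, which is continuous; thus $\varphi$ is a homeomorphism onto its image and $e = d$ is admissible. This yields $\dim X \le d^\ast$ and completes the identification $d^\ast = \dim X$. The main obstacle is precisely the continuity repair in this last step, where the iterated use of Lemma \ref{lem:dim2} stands in for the cell-decomposition arguments that would be available in the o-minimal case.
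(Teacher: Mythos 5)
Your proposal is correct and follows essentially the same route as the paper: the upper bound via Corollary \ref{cor:dim1}(1) applied to the (definable) inverse of the embedding, and the lower bound via definable choice (Lemma \ref{lem:definable_selection}) followed by generic continuity (Lemma \ref{lem:dim2}) to repair the section, with the projection $\pi$ serving as the continuous inverse. Your only addition is to spell out the coordinatewise, iterated application of Lemma \ref{lem:dim2}, which the paper leaves implicit since that lemma is stated for $F$-valued functions.
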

\begin{proof}
	Set $d=\dim X$ and let $d'$ be the maximum of nonnegative integers $e$ having an open box $B$ in $F^e$ and 
	a definable injective continuous map $\varphi:B \rightarrow X$ satisfying the condition in the proposition.
	We first show the inequality $d' \leq d$.
	Let $B$ and $\varphi:B \rightarrow X$ be an open box and a definable injective continuous map satisfying the above condition, respectively.
	We have $\dim \varphi(B) = \dim B =d'$ by Corollary \ref{cor:dim1}(1).
	We get $d=\dim X \geq \dim \varphi(B)=d'$.
	
	We next prove the inequality $d \leq d'$.
	Let $F^n$ be the ambient space of $X$.
	By the definition of dimension, there exists a coordinate projection $\pi:F^n \to F^d$ such that $\pi(X)$ has a nonempty interior.
	Let $U$ be an open box contained in $\pi(X)$.
	By Lemma \ref{lem:definable_selection}, there exists a definable map $\tau:U \to X$ such that the composition $\pi \circ \tau$ is the identity map.
	We may assume that $\tau$ is continuous by Lemma \ref{lem:dim2} by shrinking the definable open set $U$ if necessary.
	Since $\tau$ is a definable continuous injective map homeomorphic onto its image, we have $d \leq d'$.
\end{proof}

The following corollary is also proven when the structure is definably complete and locally o-minimal in \cite[Corollary 3.12]{Fuji_tame}.
We give a complete proof here for the same reason as above.
\begin{corollary}\label{cor:dim2}
Let $\mathcal F=(F,<,+,\cdot,0,1,\ldots)$ be as in Proposition \ref{prop:dim}.
Let $X$ be a nonempty definable subset of $F^n$ of dimension $d$.
There exists a point $x \in X$ such that, for any open box $B$ containing the point $x$ such that $\dim X \cap B = d$.
\end{corollary}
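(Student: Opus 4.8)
The plan is to pick the desired point from the complement of a \emph{bad locus}. Define
\[
Y=\{x\in X : \text{there is an open box } B \text{ with } x\in B \text{ and } \dim(X\cap B)<d\}.
\]
A point of $X\setminus Y$ is exactly a point $x$ such that every open box $B\ni x$ has $\dim(X\cap B)\ge d$; since $X\cap B\subseteq X$ forces $\dim(X\cap B)\le d$, such an $x$ satisfies $\dim(X\cap B)=d$ for all boxes $B\ni x$, which is precisely the conclusion. So it suffices to prove $Y\neq X$, and I will in fact show the stronger statement $\dim Y<d$.

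First I would verify that $Y$ is definable. For $(a,b)\in F^{2n}$ with $a_i<b_i$ for all $i$, write $B_{a,b}=\prod_{i=1}^n(a_i,b_i)$ for the corresponding open box. The set of parameters $(a,b)$ with $\dim(X\cap B_{a,b})<d$ is definable, because the dimension of the fibres of a definable family is a definable function of the parameter, which is part of the dimension theory underlying Proposition \ref{prop:dim}. Hence $Y$, being the set of $x\in X$ that lie in $B_{a,b}$ for some such parameter $(a,b)$, is definable. One also checks at once that $Y$ is open in $X$: a box $B$ witnessing $x\in Y$ witnesses $x'\in Y$ for every $x'\in X\cap B$, whence $X\cap B\subseteq Y$. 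This observation is not needed below, but it clarifies the geometry.

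The core of the argument is the strict inequality $\dim Y<d$, which I would establish by contradiction. Assume $\dim Y=d$; note $\dim Y\le d$ already holds since $Y\subseteq X$. By Proposition \ref{prop:dim2} there are an open box $B_0$ in $F^d$ and a definable continuous injection $\varphi\colon B_0\to Y$ that is a homeomorphism onto its image. Fix $t_0\in B_0$, put $y_0=\varphi(t_0)\in Y$, and choose an open box $B\ni y_0$ witnessing $y_0\in Y$, so that $\dim(X\cap B)<d$. Since $\varphi$ is continuous and $B$ is open, $\varphi^{-1}(B)$ is open in $B_0$ and contains $t_0$, hence contains an open box $B_0'\subseteq B_0$ with $t_0\in B_0'$. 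Then $\varphi(B_0')\subseteq B$ and $\varphi(B_0')\subseteq\varphi(B_0)\subseteq X$, so $\varphi|_{B_0'}\colon B_0'\to X\cap B$ is a definable continuous injection from an open box in $F^d$, homeomorphic onto its image. Proposition \ref{prop:dim2} then yields $\dim(X\cap B)\ge d$, contradicting $\dim(X\cap B)<d$. Therefore $\dim Y<d$.

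Since $\dim Y<d=\dim X$, the set $Y$ is a proper subset of $X$, and any $x\in X\setminus Y$ has the required property by the first paragraph. I expect the two delicate points to be the definability of $Y$, that is, the definability of fibrewise dimension across the definable family of boxes, and the localisation step in the third paragraph, where the continuity of $\varphi$ is exactly what lets one replace the global $d$-dimensional cell $\varphi(B_0)\subseteq Y$ by a cell $\varphi(B_0')$ lying entirely inside a single witness box $X\cap B$.
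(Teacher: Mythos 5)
Your proof is correct, and its engine is exactly the one the paper uses: Proposition \ref{prop:dim2} applied twice, with the continuity of the cell map $\varphi$ letting you shrink its domain so that the image lands inside a prescribed box, whence $\dim(X\cap B)\geq d$. The difference is purely in the packaging. The paper applies Proposition \ref{prop:dim2} once to $X$ itself, picks \emph{any} point $x=\varphi(t)$ in the image of the resulting cell, and shows directly that this $x$ works; there is no bad locus, no contradiction, and no need to know that fibrewise dimension is definable over the family of boxes. Your detour through $Y$ costs you that extra definability check (which does go through: $\dim(X\cap B_{a,b})<d$ unfolds, via Definition \ref{def:dim}, into the first-order condition that $\pi(X\cap B_{a,b})$ has empty interior for each of the finitely many coordinate projections $\pi$ onto $F^d$, so $Y$ is definable and Proposition \ref{prop:dim2} may legitimately be applied to it). In exchange, you prove the marginally stronger statement that the set of points failing the conclusion has dimension strictly less than $d$, rather than merely that some point satisfies it; the paper's argument in fact yields comparable information (every point of $\varphi(U)$ works), but does not state it. Either route is fine; the paper's is shorter because it constructs the witness rather than excluding the non-witnesses.
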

\begin{proof}
	By Proposition \ref{prop:dim2}, there exists an open box $U$ in $F^d$ and definable injective continuous map $\varphi:U \to X$ homeomorphic onto its image.
	Take $t \in U$ and set $x=\varphi(t)$.
	For any open box $B$ containing the point $x$, the inverse image $\varphi^{-1}(B)$ is a definable open subset of $F^d$.
	We can take an open box $V$ containing the point $t$ and contained in $\varphi^{-1}(B)$.
	The restriction $\varphi|_{V}: V \to X \cap B$ of $\varphi$ to $V$ is a definable injective continuous map homeomorphic onto its image.
	It implies that $\dim (X \cap B) \geq d$ by Proposition \ref{prop:dim2}.
	The opposite inequality $\dim (X \cap B) \leq \dim (X) = d$ is obvious.
\end{proof}

\section{Partition Degree and Hugeness}\label{sec:partition_degree}
We consider a d-minimal expansion of an ordered field.
The dimension function introduced in Section \ref{sec:preliminary} possesses tame features, but it is not sufficiently tame for our study.
It does not necessarily fulfill the continuity property.
We introduce a new function called partition degree which supplements the dimension function.
First of all, we introduce the notion of regularity, which is used in the definition of partition degree.

\subsection{Regularity and Definable $\mathcal C^r$ submanifolds}\label{subsec:regularity}

We  define $r$-regularity and definable $\mathcal C^r$ submanifolds.
\begin{definition}
	Let $r$ be a positive integer.
	Consider a definably complete expansion of an ordered field, whose universe is denoted by $F$.
	A definable subset $X$ of $F^n$ of dimension $d$ is called a \textit{definable $\mathcal C^r$ submanifold} of dimension $d$ if, for every point $x$ in $X$, there exists a definable $\mathcal C^r$ diffeomorphism $\varphi:U \to V$ from a definable open neighborhood $U$ of $x$ in $F^n$ onto a definable open neighborhood $V$ of the origin in $F^n$ such that $\varphi(x)$ is the origin and the equality $\varphi(X \cap U)= V \cap (F^d \times \{\overline{0}_{n-d}\}) $ holds.
	Here, $\overline{0}_{n-d}$ denotes the origin in $F^{n-d}$.
	A definable map $f:X \to F$ defined on a definable $\mathcal C^r$ submanifold $X$ is \textit{of class $\mathcal C^r$} if the composition $f \circ (\varphi|_{X \cap U})^{-1}$ is of class $\mathcal C^r$, where $\varphi|_{X \cap U}$ denotes the restriction of the definable $\mathcal C^r$ diffeomorphism $\varphi:U \to V$ to $X \cap U$.
	
	Let $X$ be a definable subset of $F^n$ of dimension $d$.
	We say that a point $x$ in $F^n$ is \textit{$r$-regular} in $X$ if $x \in X$ and there exists a definable $\mathcal C^r$ diffeomorphism $\varphi:U \to V$ from 
	 a definable open neighborhood $U$ of $x$ in $X$ which is simultaneously a definable $\mathcal C^r$ submanifold onto a definable open subset $V$ of $F^d$.
	We simply call an $r$-regular point \textit{regular} when the positive integer $r$ is clear from the context.
\end{definition}	

The following lemmas are trivial:
\begin{lemma}\label{lem:open_mfd}
	Let $r$ be a positive integer.
	Consider a definably complete expansion of an ordered field.
	A definable open subset of a definable $\mathcal C^r$ submanifold of dimension $d$ is a definable $\mathcal C^r$ submanifold of dimension $d$.
\end{lemma}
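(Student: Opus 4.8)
The plan is to localize the problem: being a definable $\mathcal C^r$ submanifold is a pointwise condition witnessed by charts, so I would produce, at each point of the open subset, a chart obtained simply by restricting a chart for the ambient submanifold. Let $X \subseteq F^n$ be a definable $\mathcal C^r$ submanifold of dimension $d$ and let $W$ be a definable open subset of $X$. By the definition of the subspace topology, I would first fix a definable open subset $O$ of $F^n$ with $W = X \cap O$.

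Next I would fix an arbitrary point $x \in W$ and invoke the submanifold structure of $X$: there is a definable $\mathcal C^r$ diffeomorphism $\varphi:U \to V$ from a definable open neighborhood $U$ of $x$ in $F^n$ onto a definable open neighborhood $V$ of the origin with $\varphi(x)=\overline{0}_n$ and $\varphi(X \cap U)= V \cap (F^d \times \{\overline{0}_{n-d}\})$. Setting $U'=U \cap O$ and $V'=\varphi(U')$, the restriction $\varphi|_{U'}:U' \to V'$ is again a definable $\mathcal C^r$ diffeomorphism, $U'$ is a definable open neighborhood of $x$ in $F^n$, and $V'$ is a definable open neighborhood of the origin, since a diffeomorphism is an open map.

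The one identity to check is
\[ \varphi(W \cap U') = V' \cap (F^d \times \{\overline{0}_{n-d}\}). \]
This I would deduce from $W \cap U' = (X \cap U) \cap U'$ together with the injectivity of $\varphi$, which yields $\varphi((X \cap U)\cap U') = \varphi(X \cap U) \cap V' = \bigl(V \cap (F^d \times \{\overline{0}_{n-d}\})\bigr) \cap V'$, and the last set equals $V' \cap (F^d \times \{\overline{0}_{n-d}\})$ because $V' \subseteq V$. Hence $\varphi|_{U'}$ witnesses the chart condition for $W$ at $x$, and as $x \in W$ is arbitrary, $W$ is a definable $\mathcal C^r$ submanifold.

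Finally, to confirm the dimension (for $W$ nonempty), I would combine two facts: $\dim W \le \dim X = d$ by monotonicity of dimension (Proposition \ref{prop:dim}(2) applied to $X = W \cup (X \setminus W)$), while the chart just constructed identifies a neighborhood of any point of $W$ with an open subset of $F^d$, so Proposition \ref{prop:dim2} gives $\dim W \ge d$. I do not anticipate any real obstacle; the only point needing slight care is the manipulation of images under $\varphi$, where injectivity of the chart is essential.
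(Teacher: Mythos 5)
Your argument is correct and is exactly the chart-restriction argument the paper has in mind---the paper labels this lemma ``trivial'' and gives no proof, and restricting the ambient chart $\varphi$ to $U\cap O$ and checking $\varphi(W\cap U')=V'\cap(F^d\times\{\overline{0}_{n-d}\})$ is the whole content. One minor caveat: your closing dimension paragraph invokes Proposition \ref{prop:dim} and Proposition \ref{prop:dim2}, which the paper states only for d-minimal structures while this lemma is asserted for definably complete expansions; this is harmless because ``of dimension $d$'' here refers to the chart dimension in the definition, the equality with topological dimension being established separately (and only under d-minimality) in Lemma \ref{lem:dim_mfd}.
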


\begin{lemma}\label{lem:open_regular}
	Let $r$ be a positive integer.
	Consider a definably complete expansion of an ordered field.
	Let $x$ be a point in a definable set $X$.
	Let $U$ be a definable open subset of $X$ containing the point $x$.
	Assume that $\dim X=\dim U$.
	The point $x$ is $r$-regular in $X$ if and only if it is $r$-regular in $U$.
\end{lemma}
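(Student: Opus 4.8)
The plan is to prove both implications by translating the local data defining $r$-regularity in one set into the corresponding data for the other, using the fact that $U$ is open in $X$ together with the hypothesis $\dim X = \dim U$ to control the dimension bookkeeping. The essential observation is that $r$-regularity of a point $x$ is a purely local condition: it asserts the existence of a definable open neighborhood of $x$ (inside the set in question) that is a definable $\mathcal C^r$ submanifold of the ambient $F^n$, together with a definable $\mathcal C^r$ diffeomorphism onto an open subset of $F^{d}$, where $d$ is the dimension of the set. So the whole argument reduces to checking that these neighborhoods can be passed back and forth between $X$ and $U$, and that the relevant dimension $d$ is the same for both.

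First I would record that, since $U$ is open in $X$ and $\dim U = \dim X =: d$, the manifold dimension appearing in the definition of $r$-regularity is the same whether we regard $x$ as a point of $X$ or of $U$. This is the step where the hypothesis $\dim X = \dim U$ is used in an essential way; without it the two notions would refer to submanifolds of different dimensions and the equivalence would fail. For the forward implication, suppose $x$ is $r$-regular in $X$. Then there is a definable open neighborhood $W$ of $x$ in $X$ which is a definable $\mathcal C^r$ submanifold of dimension $d$ and a definable $\mathcal C^r$ diffeomorphism $\varphi:W \to V$ onto a definable open subset $V$ of $F^d$. Since $U$ is open in $X$, the intersection $W \cap U$ is open in $X$, hence open in $W$; by Lemma \ref{lem:open_mfd} it is again a definable $\mathcal C^r$ submanifold of dimension $d$, and it is an open neighborhood of $x$ in $U$. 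Restricting $\varphi$ to $W \cap U$ gives a definable $\mathcal C^r$ diffeomorphism onto the definable open subset $\varphi(W \cap U)$ of $F^d$, which witnesses that $x$ is $r$-regular in $U$.

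For the reverse implication, suppose $x$ is $r$-regular in $U$, witnessed by a definable open neighborhood $W'$ of $x$ in $U$ that is a definable $\mathcal C^r$ submanifold and a diffeomorphism $\psi: W' \to V'$ onto an open subset of $F^{d}$. Because $U$ is open in $X$, any subset of $U$ that is open in $U$ is automatically open in $X$; in particular $W'$ is an open neighborhood of $x$ in $X$ as well. The same submanifold structure and the same diffeomorphism $\psi$ then witness $r$-regularity of $x$ in $X$, once we note that the target dimension $d$ agrees with $\dim X$. This direction is essentially immediate from transitivity of the open-subset relation.

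I expect the only point requiring care — the main obstacle, if one can call it that in a lemma the authors themselves describe as trivial — to be the consistent use of the dimension hypothesis: one must verify that $\dim X = \dim U$ guarantees that the integer $d$ used in the definition of $r$-regularity is the same on both sides, since otherwise the phrase ``open subset of $F^d$'' would refer to ambient spaces of different dimensions. Everything else is formal manipulation of openness under the relation ``open in,'' combined with Lemma \ref{lem:open_mfd} to preserve the submanifold property when passing to open subsets. No appeal to the definable inverse function theorem or to any of the d-minimal dimension machinery is needed beyond recording the equality of dimensions.
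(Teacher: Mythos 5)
Your argument is correct and is exactly the intended one: the paper labels this lemma as trivial and gives no proof, and your two directions (shrinking a regular neighborhood of $x$ in $X$ to $W\cap U$ via Lemma \ref{lem:open_mfd}, and promoting a neighborhood open in $U$ to one open in $X$ by transitivity of openness, with $\dim X=\dim U$ ensuring the target dimension $d$ is the same in both readings of the definition) supply precisely the omitted details.
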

%\begin{proof}
%	It is obvious.
%\end{proof}

The following lemma says that the standard topological dimension of a definable $\mathcal C^r$ submanifold of dimension $d$ equals to $d$. 
\begin{lemma}\label{lem:dim_mfd}
Let $r$ be a positive integer.
Consider a d-minimal expansion of an ordered field.
The equality $\dim M=d$ holds for every definable $\mathcal C^r$ submanifold $M$ of dimension $d$.
\end{lemma}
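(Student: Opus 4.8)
The plan is to prove the two inequalities $\dim M \ge d$ and $\dim M \le d$ separately, where throughout $d$ denotes the \emph{chart dimension} of $M$, i.e.\ the integer appearing in the local models $F^d \times \{\overline{0}_{n-d}\}$, and $\dim$ is the topological dimension of Definition \ref{def:dim}. I would first isolate two elementary facts. \emph{Fact (a)}: a definable homeomorphism preserves dimension, which follows by applying Corollary \ref{cor:dim1}(1) both to the map and to its inverse. \emph{Fact (b)}: every definable subset $S$ of $F^d \times \{\overline{0}_{n-d}\} \subseteq F^n$ satisfies $\dim S \le d$, since any coordinate projection $\pi:F^n \to F^{d+1}$ must retain at least one of the last $n-d$ coordinates, forcing $\pi(S)$ into a coordinate hyperplane and hence to have empty interior.

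For the lower bound, I would fix any point $x \in M$ and take a chart $\varphi:U \to V$ as in the definition, so that $\varphi$ restricts to a definable homeomorphism from $M \cap U$ onto $V \cap (F^d \times \{\overline{0}_{n-d}\})$. Since $V$ is a definable open neighborhood of the origin in $F^n$, it contains an open box around the origin, whose slice in $F^d \times \{\overline{0}_{n-d}\}$ is an open box in $F^d$; hence $\dim(V \cap (F^d \times \{\overline{0}_{n-d}\})) = d$ by Fact (b) and the definition of dimension. By Fact (a) this gives $\dim(M \cap U) = d$, and monotonicity of $\dim$ (which follows from Proposition \ref{prop:dim}(2)) yields $\dim M \ge \dim(M \cap U) = d$.

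For the upper bound, the difficulty is that $\dim M$ is a global quantity whereas the charts describe $M$ only locally, so I cannot read off $\dim M$ from a single chart without a localization step. Here I would invoke Corollary \ref{cor:dim2}: there is a point $x \in M$ such that $\dim(M \cap B) = \dim M$ for every open box $B$ containing $x$. Taking a chart $\varphi:U \to V$ at this particular $x$ and an open box $B$ with $x \in B \subseteq U$, Facts (a) and (b) give $\dim M = \dim(M \cap B) \le \dim(M \cap U) = \dim(V \cap (F^d \times \{\overline{0}_{n-d}\})) \le d$.

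I expect the only genuinely delicate point to be this last localization: once Corollary \ref{cor:dim2} supplies a point whose local dimension realizes the global dimension, both bounds reduce to the homeomorphism-invariance in Fact (a) and the slice estimate in Fact (b), both of which are routine. Combining the two inequalities gives $\dim M = d$.
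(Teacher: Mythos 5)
Your proposal is correct and follows essentially the same route as the paper: the paper also uses Corollary \ref{cor:dim2} to pick a point where the local dimension realizes $\dim M$, takes a chart there, and deduces $d' \leq d \leq d'$ from $\dim(M\cap B) \leq \dim(M \cap U) \leq \dim M$ together with $\dim(M\cap U) = d$ via Corollary \ref{cor:dim1}(1). Your explicit separation into Facts (a) and (b) and into the two inequalities is just a more detailed write-up of the same argument.
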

\begin{proof}
	Let $F$ be the universe and $F^n$ be the ambient space of $M$.
	Set $d'=\dim M$.
	We can take $x \in M$ such that $\dim M \cap B=d'$ for every open box $B$ containing the point $x$ by Corollary \ref{cor:dim2}.
	By the definition of definable $\mathcal C^r$ submanifold of dimension $d$, there exists a definable $\mathcal C^r$ diffeomorphism $\varphi:U \to V$ from a definable open neighborhood $U$ of $x$ in $F^n$ onto a definable open neighborhood $V$ of the origin in $F^n$ such that $\varphi(x)$ is the origin and the equality $\varphi(M \cap U)= V \cap (F^d \times \{\overline{0}_{n-d}\}) $ holds.
	We have $\dim M \cap U=d$ by Corollary \ref{cor:dim1}(1).
	We take an open box containing the point $B$ so that $B$ is contained in $U$.
	The inclusions $M \cap B \subseteq M \cap U \subseteq M$ imply $d' \leq d \leq d'$.
	We have proven $d=d'$.
\end{proof}

We give an equivalent condition for a point to be regular.
\begin{lemma}\label{lem:regular}
	Let $r$ be a positive integer.
	Consider a definably complete expansion of an ordered field.
	Let $F$ be its universe and $X$ be a definable subset of $F^n$ of dimension $d$.
	A point $x \in X$ is $r$-regular in $X$ if and only if there exist a coordinate projection $\pi:F^n \to F^d$ and an open box $U$ in $F^n$ containing the point $x$ such that $X \cap U$ is the permuted graph of a definable $\mathcal C^r$ map $\varphi$ defined on $\pi(U)$ under $\pi$; namely, $X \cap U$ is the graph of the map $\varphi$ after the permutation of coordinates so that $\pi$ is the projection onto the first $d$ coordinates.  
\end{lemma}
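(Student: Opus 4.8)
The plan is to prove the two implications separately, and throughout I will use the ambient flattening charts supplied by the definition of a definable $\mathcal C^r$ submanifold together with the definable inverse function theorem (Theorem \ref{thm:inverse}).

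For the easy direction, suppose $X \cap U$ is the permuted graph of a definable $\mathcal C^r$ map $\psi$ under a coordinate projection $\pi$. After permuting coordinates I may assume $\pi$ is the projection onto the first $d$ coordinates, that $U = U_1 \times U_2$ with $U_1 = \pi(U)$ an open box in $F^d$ and $U_2$ an open box in $F^{n-d}$, and that $X \cap U = \{(t,\psi(t)) : t \in U_1\}$. The map $\Phi(t,s) = (t, s - \psi(t))$ is then a definable $\mathcal C^r$ diffeomorphism of $U$ onto an open subset of $F^n$ carrying $X \cap U$ onto the slice $\Phi(U) \cap (F^d \times \{\overline{0}_{n-d}\})$, which exhibits $X \cap U$ as a definable $\mathcal C^r$ submanifold of dimension $d$, hence as an open neighborhood of $x$ in $X$ with that property. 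The restriction $\pi|_{X \cap U}$ is a definable $\mathcal C^r$ diffeomorphism onto the open set $U_1 \subseteq F^d$, its inverse being $t \mapsto (t,\psi(t))$, so $x$ is $r$-regular in $X$.

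For the converse, assume $x$ is $r$-regular. By definition there is a definable open neighborhood $U_0$ of $x$ in $X$ which is a definable $\mathcal C^r$ submanifold of dimension $d$; writing $U_0 = X \cap O$ with $O$ open and applying the submanifold definition at $x$, I obtain after intersecting with $O$ and shrinking to a box a definable open box $W \ni x$ with $X \cap W = U_0 \cap W$ and a definable $\mathcal C^r$ diffeomorphism $\Psi : W \to W'$ onto an open neighborhood $W'$ of the origin with $\Psi(x) = \overline{0}_n$ and $\Psi(X \cap W) = W' \cap (F^d \times \{\overline{0}_{n-d}\})$. Setting $g(t) = \Psi^{-1}(t,\overline{0}_{n-d})$ on the open set $\{t : (t,\overline{0}_{n-d}) \in W'\}$ gives a definable $\mathcal C^r$ parametrization of $X \cap W$ with $g(\overline{0}_d) = x$. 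Since $\Psi^{-1}$ is a diffeomorphism, the differential $d_{\overline{0}_d} g$ has rank $d$, so some $d$ of its rows are independent; choosing the corresponding coordinate projection $\pi$, the composite $\pi \circ g$ has invertible differential at $\overline{0}_d$. The inverse function theorem then makes $\pi \circ g$ a definable $\mathcal C^r$ diffeomorphism from a neighborhood $V_1$ of $\overline{0}_d$ onto a neighborhood of $\pi(x)$, and composing the remaining coordinates of $g$ with its inverse produces the candidate definable $\mathcal C^r$ map $\psi$ whose graph is $X$ near $x$.

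The step I expect to require the most care is arranging that $X \cap U$ is \emph{exactly} the graph of $\psi$ over the open box $\pi(U)$, rather than merely agreeing with it locally. Since $g$ is an embedding, the image $g(V_1)$ is open in $X \cap W$ and hence in $X$, so it equals $X \cap O'$ for some ambient open $O' \ni x$; I then choose, using the continuity of $\psi$ and $\Psi(x) = \overline{0}_n$, an open box $U = \pi(U) \times U_2 \subseteq O' \cap W$ small enough that $\psi(\pi(U)) \subseteq U_2$. This guarantees that every $(u,\psi(u))$ with $u \in \pi(U)$ lies in $X \cap U$ and, conversely, that every point of $X \cap U \subseteq g(V_1)$ lying over $u \in \pi(U)$ equals $(u,\psi(u))$, yielding the desired permuted graph representation. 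The rank computation for $d_{\overline{0}_d} g$ together with the invocation of the inverse function theorem is the analytic heart of the argument; the box-shrinking is routine once the openness of $g(V_1)$ in $X$ is noted.
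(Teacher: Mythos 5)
Your proof is correct and follows essentially the same route as the paper: obtain a definable $\mathcal C^r$ parametrization of a regular neighborhood of $x$, observe its differential has rank $d$, choose a coordinate projection $\pi$ making the composite invertible, apply the definable inverse function theorem, and shrink to a box over which $X$ is exactly the permuted graph. The only cosmetic difference is that you build the parametrization from the ambient flattening chart of the submanifold definition, whereas the paper uses the inverse of the diffeomorphism onto an open subset of $F^d$ supplied directly by the definition of $r$-regularity; you also write out the ``if'' direction that the paper dismisses as obvious.
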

\begin{proof}
	The `if' part of the lemma is obvious.
	We prove the `only if' part.
	Assume that $x$ is $r$-regular in $X$.
	There exists a definable $\mathcal C^r$ diffeomorphism $\varphi:U' \cap X \to V$. Here, $U'$ is a definable open neighborhood of $x$ in $F^n$ and $V$ is a definable open subset of $F^d$.
	We may assume that $\varphi(x)$ is the origin $\overline{0}_d$ without loss of generality.
	Let $\psi:V \to U' \cap X$ be the inverse map of $\varphi$.
	Consider the differential $d_{\overline{0}_d}\psi: T_{\overline{0}_d}V \to F^n$ of $\psi$ at the origin $\overline{0}_d$.
	We have $\myrank d_{\overline{0}_d}\psi=d$ because $\psi$ is a diffeomorphism.
	There exists a coordinate projection $\pi:F^n \to F^d$ such that the composition $d_{\overline{0}_n} \pi \circ d_{\overline{0}_d}\psi$ is invertible.
	By Theorem \ref{thm:inverse}, there exists a definable open subset $W$ of $F^d$ and a definable $\mathcal C^r$ map $\tau:W \to V$ such that $\pi(x) \in V$, the composition $\pi \circ \psi$ is a definable $\mathcal C^r$ diffeomorphism on $\tau(W)$ and $\tau$ is its inverse.
	
	Set $f=\psi \circ \tau$.
	The map $f$ is a definable $\mathcal C^r$ map.
	We have $f(W) \subseteq X$ and $\pi \circ f$ is the identity map on $W$.
	Set $U''=U' \cap \pi^{-1}(W)$.
	The equality $U'' \cap X = f(W)$ is obvious.
	Take a sufficiently small $\varepsilon>0$ so that $B_n(x,\varepsilon):=\{y \in F^n \;|\; |x-y|<\varepsilon\}$ is contained in $U''$.
	Since $f$ is continuous, if we take a sufficiently small $\delta>0$, we have $f(B_d(\pi(x),\delta) )\subseteq B_n(x,\varepsilon)$.
	Here, $B_d(\pi(x),\delta) $ is defined in the same manner as $B_n(x,\varepsilon)$.
	Set $U=\pi^{-1}(B_d(\pi(x),\delta) ) \cap B_n(x,\varepsilon)$.
	The intersection $X \cap U$ is the permuted graph of a definable $\mathcal C^r$ map defined on $\pi(U)$ under $\pi$.
\end{proof}

We introduce several useful corollaries of Lemma \ref{lem:regular}.

\begin{definition}
	Let $r$ be a positive integer.
	Consider a definably complete expansion of an ordered field.
	Let $X$ be a definable set and $\myReg_r(X)$ denotes the set of $r$-regular points in $X$.
	We omit the subscript $r$ of $\myReg_r(X)$  when $r$ is clear from the context. 
\end{definition}

\begin{corollary}\label{cor:regular1}
	Let $r$ be a positive integer.
	Consider a d-minimal expansion of an ordered field.
	%Let $X$ be a definable set and $\myReg_r(X)$ denotes the set of $r$-regular points in $X$.
	The set $\myReg_r(X)$ is definable and open in $X$.
	The equality $\dim \myReg_r(X) = \dim X$ holds when the structure is d-minimal.
\end{corollary}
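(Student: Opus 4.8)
The plan is to treat the three assertions—definability, openness, and the dimension equality—using the local normal form supplied by Lemma~\ref{lem:regular}.

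\emph{Definability and openness.} Writing $d=\dim X$, Lemma~\ref{lem:regular} tells us that a point $x\in X$ is $r$-regular precisely when there exist one of the finitely many coordinate projections $\pi:F^n\to F^d$ and an open box $U\ni x$ such that $X\cap U$ is the permuted graph of a definable $\mathcal C^r$ map defined on $\pi(U)$. The endpoints of $U$ range over $F$, so they may be existentially quantified, and the requirement that $\pi|_{X\cap U}$ be a bijection onto $\pi(U)$ with $\mathcal C^r$ inverse is first-order: differentiability and continuity of the partial derivatives up to order $r$ are expressible by $\varepsilon$--$\delta$ formulas in the ordered field. Taking the disjunction over the finitely many $\pi$ shows $\myReg_r(X)$ is definable. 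For openness I would observe that if $U$ witnesses the $r$-regularity of $x$, then every $x'\in X\cap U$ is again $r$-regular, since shrinking $U$ to a sub-box around $x'$ preserves the permuted-graph form; hence $X\cap U\subseteq\myReg_r(X)$, so $\myReg_r(X)$ is open in $X$.

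\emph{The dimension equality.} The inequality $\dim\myReg_r(X)\le\dim X$ is immediate from $\myReg_r(X)\subseteq X$. For the reverse inequality I would first produce a $\mathcal C^r$ section of $X$ over an open box. Choose $\pi:F^n\to F^d$ with $\pi(X)$ of nonempty interior and apply Lemma~\ref{lem:dim1} to pass to a dense open set on which each fiber $\pi^{-1}(x)\cap X$ is of dimension $\le 0$. Lemma~\ref{lem:definable_selection} yields a definable section $\tau$ over an open box $V\subseteq\pi(X)$; Lemma~\ref{lem:dim2} makes $\tau$ continuous after shrinking $V$, and Lemma~\ref{lem:dim3}, applied to each component function, makes $\tau$ of class $\mathcal C^r$ after shrinking $V$ once more. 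The graph $\Gamma$ of $\tau$ is then a $d$-dimensional definable subset of $X$.

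\emph{The main obstacle.} The delicate step is that $\Gamma$ need not be open in $X$, since other sheets of $X$ could accumulate on it, so a point of $\Gamma$ need not be $r$-regular. I would resolve this using that each $\tau(x)$ is isolated in the zero-dimensional fiber $\pi^{-1}(x)\cap X$. Let $g(x)$ be the supremum of radii $\varepsilon$ for which $\tau(x)$ is the unique point of $X\cap\pi^{-1}(x)$ within distance $\varepsilon$ of $\tau(x)$; then $g$ is definable and strictly positive on $V$, so by Lemma~\ref{lem:dim2} it is continuous on a smaller box and therefore bounded below by some $\delta>0$ near a chosen point $x_0$. Setting $U=W\times B_{n-d}(\tau(x_0),\delta')$ for a small box $W\ni x_0$ with $\tau(W)$ contained in a small ball about $\tau(x_0)$ and $\delta'$ suitably smaller than $\delta$, a triangle-inequality estimate forces $X\cap U$ to coincide with the graph of $\tau|_W$. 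By Lemma~\ref{lem:regular} the point $(x_0,\tau(x_0))$ is then $r$-regular, and since $\dim(\Gamma\cap U)=d$ we conclude $\dim\myReg_r(X)\ge d$, completing the proof.
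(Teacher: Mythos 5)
Your treatment of definability and openness is correct and is exactly what the paper's one-line citation of Lemma~\ref{lem:regular} is meant to convey: the local permuted-graph condition is first-order (finitely many projections, quantified box endpoints, $\mathcal C^r$-ness for a fixed finite $r$ expressible by $\varepsilon$--$\delta$ formulas), and any box witnessing regularity of $x$ witnesses regularity of every point of $X$ inside it. Your strategy for the dimension equality --- build a $\mathcal C^r$ section over an open box and then show that near a well-chosen point the graph of the section exhausts $X$ --- is also the right one; the paper's proof of the corollary is only a citation, but the detailed version of this argument appears later in the proof of Theorem~\ref{thm:finite_pdeg} (the construction of the sets $Y$ and $M$ there), and your outline parallels it. You are also right that the ``other sheets accumulating'' issue is the real obstacle and cannot be skipped.

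There is, however, one genuine gap: the claim that ``each $\tau(x)$ is isolated in the zero-dimensional fiber $\pi^{-1}(x)\cap X$.'' In a d-minimal structure a definable set of dimension zero need not be discrete --- it is only a finite union of discrete sets --- so it can have accumulation points belonging to itself, and a section produced by Lemma~\ref{lem:definable_selection} applied to $X$ may pick such a point; there your function $g$ vanishes and the triangle-inequality step collapses. This is precisely why the author, in the proof of Theorem~\ref{thm:finite_pdeg}, first passes to $Y=\bigcup_{x}\myIso(\pi^{-1}(x)\cap X)$ before selecting a graph. The repair is short but must be made explicit: the family $x\mapsto\myIso(\pi^{-1}(x)\cap X)$ is uniformly definable, and each of these sets is nonempty because the zero-dimensional fibers have finite Cantor--Bendixson rank (cf.\ Lemma~\ref{lem:dmin_char} and the remark in Definition~\ref{def:extended_dim_dmin}); apply definable choice to this family rather than to the fibers themselves. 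With that modification $g>0$ everywhere on the box, and the rest of your argument (continuity of $g$ after shrinking, the lower bound $\delta$, and the product neighborhood $U$) goes through and yields a $d$-dimensional set of $r$-regular points via Lemma~\ref{lem:regular}.
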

\begin{proof}
	It follows from Lemma \ref{lem:regular} and Corollary \ref{cor:dim1}.
\end{proof}

\begin{corollary}\label{cor:regular2}
	Let $r$ be a positive integer.
	Consider a definably complete expansion of an ordered field whose universe is $F$.
	A definable subset $X$ of $F^n$ of dimension $d$ is a definable $\mathcal C^r$ submanifold of dimension $d$ if and only if every point in $X$ is $r$-regular in $X$.
\end{corollary}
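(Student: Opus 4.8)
The plan is to prove the two implications separately, relying on Lemma \ref{lem:open_mfd} for the forward direction and on the graph description furnished by Lemma \ref{lem:regular} for the reverse direction.

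For the forward implication I would assume that $X$ is a definable $\mathcal C^r$ submanifold of dimension $d$ and fix an arbitrary point $x \in X$. The definition of a submanifold supplies a definable $\mathcal C^r$ diffeomorphism $\varphi:U \to V$ with $\varphi(x)$ the origin and $\varphi(X \cap U) = V \cap (F^d \times \{\overline{0}_{n-d}\})$. The set $X \cap U$ is a definable open neighborhood of $x$ in $X$, and by Lemma \ref{lem:open_mfd} it is again a definable $\mathcal C^r$ submanifold of dimension $d$. Composing the restriction of $\varphi$ to $X \cap U$ with the coordinate projection $F^d \times \{\overline{0}_{n-d}\} \to F^d$ yields a definable $\mathcal C^r$ diffeomorphism from $X \cap U$ onto a definable open subset of $F^d$, which is exactly the data witnessing that $x$ is $r$-regular in $X$. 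Since $x$ was arbitrary, every point of $X$ is $r$-regular.

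For the reverse implication, suppose every point of $X$ is $r$-regular and fix $x \in X$; the goal is to produce a straightening chart meeting the submanifold definition at $x$. Here I would invoke Lemma \ref{lem:regular}, which after a permutation of coordinates provides a coordinate projection $\pi:F^n \to F^d$ and an open box $U \ni x$ such that $X \cap U$ is the graph of a definable $\mathcal C^r$ map $g:\pi(U) \to F^{n-d}$, that is, $X \cap U = \{(t,g(t)) : t \in \pi(U)\}$. I then straighten this graph by the map $\Phi(s,u) = (s - \pi(x),\, u - g(s))$ on $U$, whose inverse $(s',u') \mapsto (s' + \pi(x),\, u' + g(s' + \pi(x)))$ is manifestly definable and of class $\mathcal C^r$; hence $\Phi$ is a definable $\mathcal C^r$ diffeomorphism onto the definable open neighborhood $V = \Phi(U)$ of the origin. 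One checks directly that $\Phi(x)$ is the origin and that $V \cap (F^d \times \{\overline{0}_{n-d}\}) = \Phi(X \cap U)$, so $\Phi$ is precisely the chart demanded by the definition. As $x$ was arbitrary, $X$ is a definable $\mathcal C^r$ submanifold of dimension $d$.

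Both implications are short once the preparatory lemmas are in hand, so I do not expect a serious obstacle. The only point demanding care is in the reverse direction: one must verify that the graph-straightening map and its inverse are genuinely definable and of class $\mathcal C^r$, and that the image has the exact product form $V \cap (F^d \times \{\overline{0}_{n-d}\})$ required by the submanifold definition, rather than merely being diffeomorphic to an open subset of $F^d$.
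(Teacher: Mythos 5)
Your proposal is correct and follows essentially the same route as the paper: the forward direction is the unwinding of the two definitions (the paper dismisses it as immediate), and the reverse direction is exactly the application of Lemma \ref{lem:regular}, with your explicit graph-straightening map $\Phi$ supplying the detail the paper leaves implicit. The verification that $\Phi(X\cap U)=V\cap(F^d\times\{\overline{0}_{n-d}\})$ does go through because the lemma gives the graph over all of $\pi(U)$, so nothing is missing.
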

\begin{proof}
	The `only if' part immediately follows from the definitions of $r$-regularity and definable $\mathcal C^r$ submanifolds.
	The `if' part follows from Lemma \ref{lem:regular}.
\end{proof}

\begin{corollary}\label{cor:product_reg2}
	Let $r$ be a positive integer.
	Consider a definably complete expansion of an ordered field.
	Let $X$ and $Y$ be definable sets.
	Let $Z$ be a definable subset of $X \times Y$ of dimension $=\dim X+\dim Y$.
	For any point $(a,b) \in \myReg_r(Z)$,  the fiber $Z_a:=\{y \in Y\;|\; (a,y) \in Z\}$ is of dimension $=\dim Y$ and  the point $b$ is $r$-regular in $Z_a$. 
\end{corollary}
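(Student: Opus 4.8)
The plan is to exploit the local graph description of regular points given by Lemma \ref{lem:regular} and then to show that the ``$X$-part'' of the graphing map cannot depend on the $Y$-coordinates. Write $X \subseteq F^m$ and $Y \subseteq F^l$, and set $p = \dim X$, $q = \dim Y$, so that $\dim Z = p + q =: d$. Since $(a,b) \in \myReg_r(Z)$, Lemma \ref{lem:regular} furnishes a coordinate projection $\pi \colon F^{m+l} \to F^d$ and an open box $U = U_X \times U_Y$ (with $U_X$ a box in $F^m$ and $U_Y$ a box in $F^l$) containing $(a,b)$ such that $Z \cap U$ is, after a permutation of coordinates, the graph of a definable $\mathcal C^r$ map $\varphi$ over $\pi(U)$. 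Because $\pi$ selects coordinates, it decomposes as $\pi(x,y) = (\pi_1(x), \pi_2(y))$ for coordinate projections $\pi_1 \colon F^m \to F^{p'}$ and $\pi_2 \colon F^l \to F^{q'}$ with $p' + q' = d$, and correspondingly $\varphi = (\varphi_X, \varphi_Y)$ records the remaining $X$- and $Y$-coordinates.

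First I would pin down $p'$ and $q'$. Since $Z \subseteq X \times Y$ we have $\pi(Z) \subseteq \pi_1(X) \times \pi_2(Y)$, while $\pi(Z) \supseteq \pi(Z \cap U) = \pi_1(U_X) \times \pi_2(U_Y)$ is a nonempty open box. A product $A \times B \subseteq F^{p'} \times F^{q'}$ can contain a nonempty open box only if both $A$ and $B$ have nonempty interior (fix a point of the $B$-factor box to embed the $A$-factor box into $A$, and symmetrically); applied to $\pi_1(X) \times \pi_2(Y)$ this gives $\dim X \ge p'$ and $\dim Y \ge q'$, that is $p' \le p$ and $q' \le q$. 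As $p' + q' = d = p + q$, equality holds: $p' = p$ and $q' = q$. Thus $\pi_1$ and $\pi_2$ separately witness the dimensions of $X$ and $Y$, and $\varphi$ is defined on the box $\pi(U) = \pi_1(U_X) \times \pi_2(U_Y) \subseteq F^p \times F^q$.

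The heart of the argument, and the step I expect to be the main obstacle, is to show that $\varphi_X$ does not depend on the $Y$-variables. Consider the definable $\mathcal C^r$ map $\Phi \colon \pi(U) \to F^m$, $\Phi(u,v) = (u, \varphi_X(u,v))$; up to a permutation of the target coordinates this is the composite of the parametrization of $Z \cap U$ with the projection onto the $X$-coordinates, so its image equals $\pi_X(Z \cap U) \subseteq X$ and hence has dimension $\le p$. For a definable $\mathcal C^r$ map the rank of the differential at any point is bounded by the dimension of the image: if $\myrank(d_t\Phi) = k$, one chooses $k$ source and $k$ target coordinates making a $k \times k$ minor invertible, and restricting $\Phi$ to the $k$-dimensional coordinate slice through $t$ followed by the projection to those $k$ target coordinates yields, by Theorem \ref{thm:inverse}, a local diffeomorphism onto an open subset of $F^k$, so $k \le \dim(\text{image}) \le p$. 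Since the Jacobian of $\Phi$ contains the $p \times p$ identity block coming from the $u$-coordinates, its rank is $p + \myrank(\partial \varphi_X / \partial v)$; the bound $\le p$ forces $\partial \varphi_X / \partial v \equiv 0$ on $\pi(U)$. As $\pi(U)$ is an open box and the field is definably complete (so the mean value theorem applies on intervals), $\varphi_X(u,v)$ is independent of $v$.

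Finally I would read off the fiber. Writing $a = (a', a'')$ and $b = (b', b'')$ in the split coordinates, the relation $(a,b) \in Z \cap U$ gives $a'' = \varphi_X(a', b') = \varphi_X(a')$, now independent of the $Y$-variable. Hence for every $v \in \pi_2(U_Y)$ the point with $X$-part $a$ and with $Y$-coordinates $v$ and $\varphi_Y(a', v)$ lies in $Z$, so $Z_a \cap U_Y$ is exactly the permuted graph over $\pi_2(U_Y) \subseteq F^q$ of the definable $\mathcal C^r$ map $v \mapsto \varphi_Y(a', v)$. This graph is $q$-dimensional, so $\dim Z_a = q = \dim Y$ (the reverse inequality holding because $Z_a \subseteq Y$), and Lemma \ref{lem:regular} applied to $Z_a$ with the projection $\pi_2$ and the box $U_Y$ shows that $b$ is $r$-regular in $Z_a$.
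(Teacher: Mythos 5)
Your proof is correct, and its skeleton matches the paper's own argument: both invoke Lemma \ref{lem:regular} to present $Z$ near $(a,b)$ as a permuted $\mathcal C^r$ graph over a split box, both decompose $\pi=(\pi_1,\pi_2)$, and both use $\pi(Z)\subseteq \pi_1(X)\times\pi_2(Y)$ together with $p'+q'=\dim X+\dim Y$ to force $p'=\dim X$ and $q'=\dim Y$. Where you genuinely diverge is at the last step, and your version is the more careful one. The paper simply asserts that $Z_a\cap U_Y$ \emph{is} the permuted graph of $v\mapsto \varphi_Y(\pi_1(a),v)$; but a point of that graph belongs to $Z_a$ only if, in addition, the non-$\pi_1$ coordinates of $a$ equal $\varphi_X(\pi_1(a),v)$ for that particular $v$, and without further argument one only gets the inclusion of $Z_a\cap U_Y$ \emph{into} the graph, which yields $\dim Z_a\leq \dim Y$ but neither the lower bound nor the regularity of $b$. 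Your Jacobian rank argument --- the image of $(u,v)\mapsto (u,\varphi_X(u,v))$ lies (up to permutation) in $X$, so the differential has rank at most $\dim X$ everywhere, so the identity block forces $\partial\varphi_X/\partial v\equiv 0$, and the mean value theorem (available in a definably complete ordered field via the extreme value theorem) then makes $\varphi_X$ constant in $v$ on the box --- supplies exactly the missing reverse inclusion. So your proposal is not a mere restatement: it makes explicit, and correctly justifies, a point that the paper's proof passes over in silence, at the cost of one extra (but entirely sound) differential-calculus step.
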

\begin{proof}
	Let $F$ be the universe.
	Let $F^m$ and $F^n$ be the ambient spaces of $X$ and $Y$, respectively.
	Set $d=\dim X$ and $e=\dim Y$.
	By Lemma \ref{lem:regular}, there exist a coordinate projection $\pi:F^{m+n} \to F^{d+e}$ and an open box $B$ in $F^{m+n}$ containing the point $(a,b)$ such that $Z \cap B$ is the permutated graph of a definable $\mathcal C^r$ map $\varphi:\pi(B) \to F^{m+n-d-e}$ under $\pi$.
	In particular, the equality $\pi(Z \cap B)=\pi(B)$ holds.
	
	Take open boxes $B_1 \subseteq F^m$ and $B_2 \subseteq F^n$ so that $B=B_1 \times B_2$.
	There exist coordinate projections $\pi_1:F^m \to F^{d'}$ and $\pi_2:F^n \to F^{e'}$ such that $\pi(x,y) = (\pi_1(x),\pi_2(y))$ for each $x \in F^m$ and $y \in F^n$. 
	The equalities $\pi(B)=\pi_1(B_1) \times \pi_2(B_2)$ and $d'+e'=d+e$ are obvious by the definition of $\pi_1$ and $\pi_2$.
	We can obviously take definable $\mathcal C^r$ maps $\varphi_1:\pi(B) \to F^{m-d'}$ and $\varphi_2:\pi(B) \to F^{n-e'}$ such that  $\varphi(x,y)=(\varphi_1(x,y),\varphi_2(x,y))$ for each $x \in \pi_1(B_1) $ and $y \in \pi_2(B_2)$ by the definitions of $\pi$, $\pi_1$, $\pi_2$ and $\varphi$.

	We want to show the equalities $d=d'$ and $e=e'$.
	Let $\rho_1:F^{m+n} \to F^m$ and $\rho_2:F^{m+n} \to F^n$ be coordinate projections onto the first $m$ coordinates and onto the last $n$ coordinates, respectively.
	Note that $\rho_1(Z) \subseteq X$ and $\rho_2(Z) \subseteq Y$ because $Z$ is a subset of $X \times Y$.
	Set $\pi'_i=\pi_i \circ \rho_i$ for $i=1,2$.  
	The equality $\pi(Z \cap B)=\pi(B)$ implies the equality $\pi'_1(Z \cap B)=\pi'_1(B)$.
	This implies that the image $\pi'_1(Z)$ has a nonempty interior in $F^{d'}$.
	Therefore, $\pi_1(X)$ has a nonempty interior because $\rho_1(Z) \subseteq X$. 
	This means that $d' \leq d$.
	We have $e' \leq e$ in the same manner.
	We get $d'=d$ and $e'=e$ because $d'+e'=d+e$.
	
	The intersection $Z_a \cap B_2$ is the permutated graph of the definable $\mathcal C^r$ map defined by $\pi_2(B_2)  \ni x \mapsto \varphi_2(\pi_1(a),x) \in F^{n-e}$.
	This implies two facts.
	Firstly, the definable set $Z_a \cap B_2$ is of dimension not smaller than $e$. 
	Since $Z_a \cap B_2 \subseteq Z_a \subseteq Y$ and $\dim Y=e$, we get $\dim Z_a=e$.
	Secondly, it implies that $b$ is $r$-regular in $Z_a$ by Lemma \ref{lem:regular}. 
\end{proof}

\begin{corollary}\label{cor:product_reg}
	Let $r$ be a positive integer.
	Consider a definably complete expansion of an ordered field.
	Let $X$ and $Y$ be definable sets.
	The equality $\myReg_r(X \times Y)=\myReg_r(X) \times \myReg_r(Y)$ holds.
\end{corollary}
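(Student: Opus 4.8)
The plan is to prove the set equality $\myReg_r(X \times Y) = \myReg_r(X) \times \myReg_r(Y)$ by establishing both inclusions, and the key conceptual observation is that Corollary \ref{cor:product_reg2} already does most of the work for the inclusion $\myReg_r(X\times Y) \subseteq \myReg_r(X) \times \myReg_r(Y)$, once we note the product dimension formula. First I would record that $\dim(X \times Y) = \dim X + \dim Y$; this is needed to invoke Corollary \ref{cor:product_reg2}, whose hypothesis is precisely that $\dim Z = \dim X + \dim Y$ for $Z \subseteq X \times Y$. Applying that corollary with $Z = X \times Y$ to a point $(a,b) \in \myReg_r(X \times Y)$ gives that $b$ is $r$-regular in the fiber $(X\times Y)_a = Y$; that is, $b \in \myReg_r(Y)$. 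By the symmetric version of the same corollary (swapping the roles of the two factors), $a \in \myReg_r(X)$, which yields the forward inclusion.

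For the reverse inclusion $\myReg_r(X) \times \myReg_r(Y) \subseteq \myReg_r(X \times Y)$, I would argue directly from the local-graph characterization in Lemma \ref{lem:regular}. Take $a \in \myReg_r(X)$ and $b \in \myReg_r(Y)$. By Lemma \ref{lem:regular} there are coordinate projections $\pi_1:F^m \to F^d$ and $\pi_2:F^n \to F^e$ (with $d = \dim X$, $e = \dim Y$) and open boxes $B_1 \ni a$, $B_2 \ni b$ such that $X \cap B_1$ is the permuted graph of a definable $\mathcal C^r$ map $\varphi_1$ over $\pi_1(B_1)$ and $Y \cap B_2$ is the permuted graph of a definable $\mathcal C^r$ map $\varphi_2$ over $\pi_2(B_2)$. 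The product map $\varphi_1 \times \varphi_2$ is then a definable $\mathcal C^r$ map, and the product projection $\pi = \pi_1 \times \pi_2 : F^{m+n} \to F^{d+e}$ together with the open box $B = B_1 \times B_2$ exhibits $(X \times Y) \cap B$ as the permuted graph of $\varphi_1 \times \varphi_2$ over $\pi(B)$. Since $\dim(X\times Y) = d+e$, Lemma \ref{lem:regular} (the `if' direction) certifies that $(a,b)$ is $r$-regular in $X \times Y$.

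I expect the main obstacle to be bookkeeping rather than any genuine difficulty: one must check that the coordinate permutations needed to realize $X \cap B_1$ and $Y \cap B_2$ as honest graphs combine coherently into a single permutation of the $m+n$ ambient coordinates putting $\pi$ into the first-$(d+e)$-coordinates form required by Lemma \ref{lem:regular}. This is exactly the type of coordinate reshuffling already handled inside the proof of Corollary \ref{cor:product_reg2}, where $\pi$ is decomposed as $\pi(x,y)=(\pi_1(x),\pi_2(y))$, so I would lean on that same decomposition to keep the argument clean. A small point worth stating explicitly is that the dimension of the permuted graph of $\varphi_1\times\varphi_2$ equals $d+e$, so that no larger-dimensional chart could arise; this follows from Corollary \ref{cor:dim1}(1) applied to $\pi$. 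With the product dimension formula and Lemma \ref{lem:regular} in hand, both inclusions are short, and the entire proof reduces to assembling these two ingredients.
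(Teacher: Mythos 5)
Your proposal is correct and follows essentially the same route as the paper: the inclusion $\myReg_r(X \times Y) \subseteq \myReg_r(X) \times \myReg_r(Y)$ is obtained by applying Corollary \ref{cor:product_reg2} to $Z = X \times Y$ (using $\dim(X\times Y)=\dim X+\dim Y$) and its symmetric counterpart, exactly as in the paper. The reverse inclusion, which the paper dismisses as ``easy to prove,'' you correctly fill in via the permuted-graph characterization of Lemma \ref{lem:regular}, which is the intended argument.
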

\begin{proof}
	The inclusion $\myReg_r(X) \times \myReg_r(Y) \subseteq \myReg_r(X \times Y)$ is easy to prove.
	We concentrate on the proof of the opposite inclusion.
	Take an arbitrary point $(a,b) \in \myReg_r(X \times Y)$ with $a \in X$ and $b \in Y$.
	The point $b$ is $r$-regular in $(X \times Y)_a=Y$ by Corollary \ref{cor:product_reg2}.
	It means that  $b \in \myReg_r(Y)$.
	We can prove $a \in \myReg_r(X)$ similarly.
\end{proof}

%The dimension function behave well on a definable $\mathcal C^r$ submanifolds.
\begin{corollary}\label{cor:mfd_same_dim}
	Let $r$ be a positive integer.
	Consider a d-minimal expansion of an ordered field.
	Let $M$ be a definable $\mathcal C^r$ submanifold of dimension $d$.
	A definable subset $A$ of $M$ has a nonempty interior in $M$ if and only if $\dim A=d$.
	In addition, if $\dim A=d$, the equality $\dim (\myint_M(A))=d$ holds.
\end{corollary}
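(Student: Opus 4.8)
The plan is to prove the two implications of the equivalence separately, and then read off the additional equality from the argument for the converse.

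The forward direction is the easy one, so I would dispatch it first. Suppose $A$ has a nonempty interior in $M$ and pick a nonempty definable set $W \subseteq A$ that is open in $M$. Since $M$ is a definable $\mathcal C^r$ submanifold of dimension $d$, Lemma \ref{lem:open_mfd} shows that $W$ is again a definable $\mathcal C^r$ submanifold of dimension $d$, whence $\dim W = d$ by Lemma \ref{lem:dim_mfd}. Combining $\dim A \geq \dim W = d$ with the monotonicity bound $\dim A \leq \dim M = d$ (the equality being Lemma \ref{lem:dim_mfd}) yields $\dim A = d$.

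For the converse I would exploit the local graph description of $M$. Assuming $\dim A = d$, Corollary \ref{cor:dim2} provides a point $x \in A$ with $\dim(A \cap B) = d$ for every open box $B$ containing $x$. Because $x \in A \subseteq M$ and $M$ is a definable $\mathcal C^r$ submanifold, $x$ is $r$-regular in $M$ by Corollary \ref{cor:regular2}, so Lemma \ref{lem:regular} supplies a coordinate projection $\pi : F^n \to F^d$ and an open box $U$ containing $x$ for which $\pi|_{M \cap U} : M \cap U \to \pi(U)$ is a definable homeomorphism onto the open box $\pi(U) \subseteq F^d$. Choosing $B \subseteq U$ gives $\dim(A \cap U) = d$; since $\pi|_{M \cap U}$ is injective and continuous with continuous inverse, Corollary \ref{cor:dim1}(1) applied in both directions gives $\dim \pi(A \cap U) = \dim(A \cap U) = d$. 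As $\pi(A \cap U)$ is a definable subset of $F^d$ of dimension $d$, it has a nonempty interior in $F^d$ by Definition \ref{def:dim}. Pulling back an open box $V \subseteq \pi(A \cap U)$ through $(\pi|_{M \cap U})^{-1}$ produces a nonempty subset of $A \cap U$ that is open in $M \cap U$, hence open in $M$, so $A$ has a nonempty interior in $M$.

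The main obstacle is exactly this converse implication: $\dim A$ is defined through coordinate projections of the ambient $F^n$, whereas ``interior in $M$'' is intrinsic to the curved set $M$, and the two notions must be reconciled. The device that resolves it is the chart from Lemma \ref{lem:regular}, which flattens $M$ locally onto an open box in $F^d$ and reduces the claim to the elementary fact that a definable subset of $F^d$ of full dimension $d$ has nonempty interior; placing that chart at a point furnished by Corollary \ref{cor:dim2} is what guarantees the dimension of $A$ survives the restriction to the chart. Finally, the additional equality is immediate: once $\dim A = d$, the converse shows $\myint_M(A)$ is nonempty, and being a nonempty definable subset of $M$ open in $M$ it is a definable $\mathcal C^r$ submanifold of dimension $d$ by Lemma \ref{lem:open_mfd}, so $\dim(\myint_M(A)) = d$ by Lemma \ref{lem:dim_mfd}.
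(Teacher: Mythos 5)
Your proposal is correct and follows essentially the same route as the paper: the forward direction via Lemmas \ref{lem:open_mfd} and \ref{lem:dim_mfd}, and the converse by locating a good point with Corollary \ref{cor:dim2}, flattening $M$ locally via Lemma \ref{lem:regular} and Corollary \ref{cor:regular2}, and pulling back an open box from the full-dimensional projection $\pi(A\cap U)$. The only cosmetic difference is that you apply Corollary \ref{cor:dim1}(1) in both directions along the graph homeomorphism where the paper invokes Corollary \ref{cor:dim1}(2) with singleton fibers; the substance is identical.
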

\begin{proof}
	The `only if' part follows from Lemma \ref{lem:open_mfd} and Lemma \ref{lem:dim_mfd}.
	
	We prove the `if' part.
	Let $F$ be the universe of the structure and $F^n$ be the ambient space of $M$.
	We can take a point $x \in A$ so that the equality $\dim A \cap B=d$ holds for any open box $B$ containing the point $x$ by Corollary \ref{cor:dim2}.
	By Lemma \ref{lem:regular} and Corollary \ref{cor:regular2}, if we take an open box $B$ appropriately, the intersection $B \cap M$ is the graph of a definable $\mathcal C^r$ map $\varphi:\pi(B) \to F^{n-d}$ after permuting the coordinates if necessary.
	Here, $\pi$ denotes the coordinate projection onto the first $d$ coordinates.
	By Corollary \ref{cor:dim1}(2), $\pi(A \cap B)$ has a nonempty interior because $\dim A \cap B=d$ and $A \cap B$ is the graph of a definable map defined on $\pi(A \cap B)$.
	Take a nonempty open box $V$ contained in $\pi(A \cap B)$.
	Set $U=B \cap \pi^{-1}(V)$.
	It is obvious that $U \cap M = U \cap A$ because $U \cap M \cap \pi^{-1}(x)$ is a singleton for each $x \in V$.
	It means that $A$ has a nonempty interior in $M$.
	
	The `in addition' part immediately follows from the equivalence we have just proven.
\end{proof}

\subsection{Definition of partition degree}\label{subsec:def_partition_degree}

We are now ready to define partition degree.
\begin{definition}
	Let $r$ be a positive integer.
	Consider a definably complete expansion of an ordered field.
	Let $F$ be its universe.
	The \textit{partition degree}, denoted by $\mypdeg(X)$, of a nonempty definable subset $X$ of $F^n$ of dimension $d$ is the minimum nonnegative integer $m$ such that $X$ is partitioned into $m+2$ definable subsets $X_{-1},X_0, \ldots, X_m$ satisfying the following conditions:
	\begin{enumerate}
		\item[(1)] $\dim X_{-1}<d$:
		\item[(2)] the definable set $X_i$ is a nonempty definable $\mathcal C^r$ submanifold of dimension $d$ and it is open in $\bigcup_{j=-1}^i X_j$ for each $0 \leq i \leq m$.
	\end{enumerate}
	Note that we do not require that $X_{-1}$ is nonempty.
	The sequence $X_{-1},X_0, \ldots, X_m$ of definable subsets of $X$ satisfying the above conditions is called an \textit{$r$-partition sequence}.
	The partition degree $\mypdeg(X)$ is independent of the choice of $r$ and finite when the structure is d-minimal.
	We prove these facts in this section.
\end{definition}

We first begin to show that $\mypdeg(X)$ is independent of the choice of $r$.
We need several lemmas.
\begin{lemma}\label{lem:mfd_cr}
	Let $r$ be a positive integer.
	Consider a d-minimal expansion of an ordered field $\mathcal F=(F,<,+,\cdot,0,1,\ldots)$.
	Let $M$ be a definable $\mathcal C^r$ submanifold of $F^n$ and $f:M \to F$ be a definable function.
	There exists a definable open subset $U$ in $M$ such that $\dim M \setminus U<\dim M$ and the restriction of $f$ to $U$ is of class $\mathcal C^r$.
\end{lemma}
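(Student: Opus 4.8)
The plan is to localize the problem to open boxes on which $M$ is a $\mathcal C^r$ graph, transport $f$ across such a graph chart to a genuine function on an open subset of $F^d$, and there invoke the generic smoothing Lemma \ref{lem:dim3}; the globalization will then be handled by the fact that $F^n$ admits only finitely many coordinate projections onto $F^d$. Set $d=\dim M$. By Corollary \ref{cor:regular2} every point of $M$ is $r$-regular, so by Lemma \ref{lem:regular} each $x \in M$ admits one of the finitely many coordinate projections $\pi_1,\ldots,\pi_k\colon F^n \to F^d$ and an open box $C \ni x$ for which $M \cap C$ is the permuted graph over $\pi_j$ of a definable $\mathcal C^r$ map; in particular $\pi_j|_{M \cap C}$ is a definable $\mathcal C^r$ diffeomorphism onto the open set $\pi_j(C) \subseteq F^d$.

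First I would define the ``good locus''. For each $j$ let $G_j$ be the set of $x \in M$ admitting an open box $C \ni x$ such that $M \cap C$ is a $\mathcal C^r$ graph over $\pi_j$ and the transported function $f \circ (\pi_j|_{M \cap C})^{-1}$ is of class $\mathcal C^r$ on $\pi_j(C)$, and put $U=\bigcup_{j=1}^k G_j$. Since ``being a $\mathcal C^r$ graph over $\pi_j$'' and ``being a $\mathcal C^r$ function on a given open set'' are first-order conditions in an expansion of an ordered field (they are expressed through difference quotients, limits, and continuity), each $G_j$ is definable, and each is visibly open in $M$; hence $U$ is a definable open subset of $M$. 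The transported condition is exactly the paper's definition of $f$ being of class $\mathcal C^r$ read through a graph chart, and this notion is independent of the chosen chart because transition maps between graph charts are $\mathcal C^r$ diffeomorphisms. Consequently, using Lemma \ref{lem:open_mfd}, $U$ is a definable $\mathcal C^r$ submanifold of dimension $d$ and $f|_U$ is of class $\mathcal C^r$.

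It then remains to prove $\dim(M \setminus U)<d$, which I would argue by contradiction. If $\dim(M \setminus U)=d$, then by Corollary \ref{cor:mfd_same_dim} the definable set $M \setminus U$ has nonempty interior in $M$, so I may pick $x_0 \in \myint_M(M \setminus U)$ together with a projection $\pi_j$ and an open box $C \ni x_0$ as above, shrunk so that $M \cap C \subseteq M \setminus U$. The transported function $g=f \circ (\pi_j|_{M \cap C})^{-1}$ is a definable function on the open set $\pi_j(C) \subseteq F^d$, so Lemma \ref{lem:dim3} yields a definable closed set $D \subseteq \pi_j(C)$ with empty interior such that $g$ is of class $\mathcal C^r$ on the open dense set $\pi_j(C) \setminus D$. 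Choosing any $t \in \pi_j(C) \setminus D$ and a small box $C' \subseteq C$ with $\pi_j(C') \subseteq \pi_j(C) \setminus D$ and containing the point $(\pi_j|_{M \cap C})^{-1}(t)$, I find that this point lies in $G_j \subseteq U$ while also lying in $M \cap C \subseteq M \setminus U$, a contradiction.

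The analytic content is carried entirely by Lemma \ref{lem:dim3}; the step I expect to demand the most care is the bookkeeping that makes $U$ genuinely definable and chart-independent, that is, verifying that ``$f$ is $\mathcal C^r$ near $x$ in $M$'' can be phrased uniformly through the finitely many graph charts and does not depend on which admissible projection $\pi_j$ is used near $x$. Everything else is a routine transfer between $M \cap C$ and $\pi_j(C)$ via the $\mathcal C^r$ diffeomorphism $\pi_j|_{M \cap C}$.
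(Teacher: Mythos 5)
Your proof is correct and follows essentially the same route as the paper: transport $f$ through the graph charts provided by $r$-regularity, apply Lemma \ref{lem:dim3} to the transported function, and conclude that the locus of non-$\mathcal C^r$ points has dimension smaller than $\dim M$. The only cosmetic difference is that the paper works directly with the bad set $Z$ of points where $f$ fails to be $\mathcal C^r$ (bounding $\dim Z$ via Corollary \ref{cor:dim2} and then taking $U=M\setminus\mycl(Z)$), whereas you build the good open set $U$ first and obtain the dimension bound by contradiction via Corollary \ref{cor:mfd_same_dim}.
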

\begin{proof}
	Set $d=\dim M$.
	Let $Z$ be the set of points at which $f$ is not of class $\mathcal C^r$.
	For any point $x \in M$, there exist a definable open neighborhood $U_x$ of $x$ in $M$ and a definable open subset $V_x$ of $F^d$ and a definable $\mathcal C^r$ diffeomorphism $\varphi_x:U_x \to V_x$.
	Apply Lemma \ref{lem:dim3} to the definable map $f \circ \varphi_x^{-1}:V_x \to F$.
	The set $\varphi_x(Z \cap U_x)$ has an empty interior.
	It means that $\dim \varphi_x(Z \cap U_x) <d$.
	We have $\dim Z \cap U_x < d$ by Corollary \ref{cor:dim1}(1).
	Since $x$ is an arbitrary point in $M$, we get $\dim Z<d$ by Corollary \ref{cor:dim2}.
	We have $\dim \mycl_{F^n}(Z)=\dim Z<d$ by Lemma \ref{lem:dim4}.
	Set $U=M \setminus \mycl_{F^n}(Z)$.
	Then $U$ is open, $f$ is of class $\mathcal C^r$ on $U$ and $\dim M \setminus U < d$.
\end{proof}

\begin{corollary}\label{cor:mfd_bdry}
Let $r$ be a positive integer.
Consider a d-minimal expansion of an ordered field $\mathcal F=(F,<,+,\cdot,0,1,\ldots)$.
Let $M$ be a definable $\mathcal C^r$ submanifold of $F^n$ and $U$ be a definable open subset of $M$.
Then the frontier $\partial_M(U)$ of $U$ in $M$ is of dimension smaller than $\dim M$. 
\end{corollary}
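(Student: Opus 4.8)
The plan is to deduce the statement from Lemma \ref{lem:mfd_cr} applied to the characteristic function of $U$. Write $d=\dim M$, and recall that, since $U$ is open in $M$, we have $\myint_M(U)=U$, so the frontier is $\partial_M(U)=\mycl_M(U)\setminus U$. I would introduce the definable function $f\colon M\to F$ given by $f(x)=1$ for $x\in U$ and $f(x)=0$ for $x\in M\setminus U$. By Lemma \ref{lem:mfd_cr} there is a definable open subset $V$ of $M$ with $\dim(M\setminus V)<d$ such that $f|_V$ is of class $\mathcal C^r$; in particular $f$ is continuous on $V$.

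The key claim is that $\partial_M(U)\cap V=\emptyset$. To see this, fix $x\in V$. Since $f$ takes only the values $0$ and $1$ and is continuous at $x$, the preimage under $f|_V$ of a small interval around $f(x)$ is a definable open neighborhood $W$ of $x$ in $V$ on which $f$ is constant, equal to $f(x)$; because $V$ is open in $M$, the set $W$ is also a neighborhood of $x$ in $M$. If $f(x)=1$, then $W\subseteq U$, so $x\in\myint_M(U)=U$ and hence $x\notin\partial_M(U)$. If $f(x)=0$, then $W\cap U=\emptyset$, so $x\notin\mycl_M(U)$ and again $x\notin\partial_M(U)$. This establishes the claim.

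It then follows that $\partial_M(U)\subseteq M\setminus V$, and monotonicity of dimension (a consequence of Proposition \ref{prop:dim}(2), since $M\setminus V=(M\setminus V)\cup\partial_M(U)$) gives $\dim\partial_M(U)\leq\dim(M\setminus V)<d=\dim M$, as desired. The only point that needs care is the passage from continuity of $f$ on $V$ to a conclusion about the frontier taken in all of $M$; this is exactly what the openness of $V$ in $M$ secures, because neighborhoods computed in $V$ remain neighborhoods in $M$. I do not expect any genuine obstacle beyond this bookkeeping, as the $\{0,1\}$-valued continuity argument is the entire mechanism and Lemma \ref{lem:mfd_cr} supplies the dimension drop for free.
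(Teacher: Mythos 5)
Your proof is correct and follows essentially the same route as the paper: both arguments apply Lemma \ref{lem:mfd_cr} to the $\{0,1\}$-valued characteristic function of $U$ and observe that the frontier $\partial_M(U)$ is contained in the locus where that function fails to be continuous (hence fails to be $\mathcal C^r$), which lies in a set of dimension smaller than $\dim M$. The paper merely states the identification of $\partial_M(U)$ with the non-$\mathcal C^r$ locus without the neighborhood bookkeeping you spell out, but the mechanism is identical.
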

\begin{proof}
	Consider the definable function $f: M \to F$ which is zero on $U$ and one outside of $U$.
	The frontier $\partial_M(U)$ coincides with the set of points at which $f$ is not of class $\mathcal C^r$.
	The corollary follows from Lemma \ref{lem:mfd_cr}. 
\end{proof}

\begin{lemma}\label{lem:mfd_rr}
Let $r$ and $r'$ be positive integers.
Consider a d-minimal expansion of an ordered field $\mathcal F=(F,<,+,\cdot,0,1,\ldots)$.
Let $M$ be a definable $\mathcal C^r$ submanifold of $F^n$.
There exists a definable closed subset $Z$ of $M$ of dimension smaller than $\dim M$ such that $M \setminus Z$ is a definable $\mathcal C^{r'}$ submanifold of $F^n$.
\end{lemma}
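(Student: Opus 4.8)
The plan is to take $Z$ to be the set of points of $M$ that fail to be $r'$-regular in $M$, namely $Z = M \setminus \myReg_{r'}(M)$, and to verify it has all the required properties. Set $d = \dim M$. By Corollary \ref{cor:regular1} applied with the exponent $r'$, the set $\myReg_{r'}(M)$ is definable and open in $M$, so $Z$ is definable and closed in $M$, and moreover $\dim \myReg_{r'}(M) = d$. Granting the dimension bound $\dim Z < d$ for the moment, it remains to check that $M \setminus Z = \myReg_{r'}(M)$ is a definable $\mathcal C^{r'}$ submanifold: since $\myReg_{r'}(M)$ is open in $M$ and has the same dimension $d$, Lemma \ref{lem:open_regular} shows that every point of $\myReg_{r'}(M)$ is $r'$-regular in $\myReg_{r'}(M)$ itself, whence $\myReg_{r'}(M)$ is a $\mathcal C^{r'}$ submanifold by Corollary \ref{cor:regular2}. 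Thus the entire content of the lemma is the inequality $\dim Z < d$.

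To prove $\dim Z < d$ I would follow the scheme of the proof of Lemma \ref{lem:mfd_cr}, reducing the global bound to a local computation by means of Corollary \ref{cor:dim2}. Assume toward a contradiction that $\dim Z = d$. Then Corollary \ref{cor:dim2} supplies a point $x \in Z$ with $\dim(Z \cap B) = d$ for every open box $B$ containing $x$. Since $x \in M$ and $M$ is a $\mathcal C^r$ submanifold, Lemma \ref{lem:regular} provides a coordinate projection $\pi : F^n \to F^d$ and an open box $U \ni x$ such that, after permuting coordinates so that $\pi$ is the projection onto the first $d$ coordinates, $M \cap U$ is the graph of a definable $\mathcal C^r$ map $\varphi : \pi(U) \to F^{n-d}$.

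The next step is to upgrade the smoothness of $\varphi$ away from a small set. Applying Lemma \ref{lem:dim3} with exponent $r'$ to each component of $\varphi$ and taking the union of the finitely many resulting closed sets, I obtain a definable closed subset $D$ of the open box $\pi(U)$ with empty interior such that $\varphi$ is of class $\mathcal C^{r'}$ on $\pi(U) \setminus D$; since $D \subseteq F^d$ has empty interior, $\dim D < d$. The graph $G$ of $\varphi|_{\pi(U) \setminus D}$ is then a nonempty definable $\mathcal C^{r'}$ submanifold of dimension $d$, and it is open in $M$ because $\pi(U) \setminus D$ is open and $M \cap U$ is open in $M$. Hence by Corollary \ref{cor:regular2} and Lemma \ref{lem:open_regular} every point of $G$ is $r'$-regular in $M$, so $G \cap Z = \emptyset$. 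Consequently $Z \cap U$ is contained in the graph of $\varphi$ over $D$, which is the image of $D$ under a definable map and therefore has dimension $\le \dim D < d$ by Corollary \ref{cor:dim1}(1). Choosing an open box $B \ni x$ with $B \subseteq U$ yields $\dim(Z \cap B) \le \dim(Z \cap U) < d$, contradicting the choice of $x$. This forces $\dim Z < d$ and completes the argument.

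The only genuine obstacle I anticipate is the globalization, that is, passing from the chart-wise estimate ``$Z$ is of dimension $< d$ near each point'' to the uniform conclusion $\dim Z < d$; this is exactly the role played by Corollary \ref{cor:dim2}, and everything else is the routine translation between $r'$-regularity and a $\mathcal C^{r'}$ graph via Lemma \ref{lem:regular} together with the smoothing provided by Lemma \ref{lem:dim3}. Note that no case distinction on the relative sizes of $r$ and $r'$ is needed: when $r' \le r$ the set $D$ is simply empty and $Z = \emptyset$, while the case $r' > r$ is the one for which the smoothing lemma does real work.
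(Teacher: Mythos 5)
Your proof is correct and follows essentially the same route as the paper's: both reduce the bound $\dim Z < \dim M$ to a local statement via Corollary \ref{cor:dim2}, pass to a $\mathcal C^r$ graph chart via Lemma \ref{lem:regular}, and use Lemma \ref{lem:dim3} to trap the non-$r'$-regular points inside the image of a definable set of dimension $<\dim M$. The only cosmetic differences are that you argue by contradiction where the paper argues directly, and that you observe $M \setminus \myReg_{r'}(M)$ is already closed in $M$ by Corollary \ref{cor:regular1}, whereas the paper takes a (redundant) closure.
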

\begin{proof}
	Let $Z'$ be the set of points in $M$ which is not $r'$-regular in $M$.
	We want to show that $\dim Z'<\dim M$.
	Once it is proved, we get the lemma.
	Indeed, set $Z=\mycl(Z') \cap M$. 
	We have $\dim Z = \dim Z' < \dim M$ by Lemma \ref{lem:dim4}.
	The difference $M \setminus Z$ is a definable $\mathcal C^{r'}$ submanifold of $F^n$ by Corollary \ref{cor:regular2}.
	
	Take an arbitrary point $x \in M$.
	We have only to prove that there exists an open box $U$ containing the point $x$ such that $\dim Z' \cap U < d:=\dim M$ by Corollary \ref{cor:dim2}.
	There exists an open box $U$ containing the point $x$ such that $M \cap U$ is the permuted graph of a definable $\mathcal C^r$ map $\varphi$ defined on $\pi(U)$ under a coordinate projection $\pi$ by  Lemma \ref{lem:regular} and Corollary \ref{cor:regular2}.
	We may assume that $\pi$ is the coordinate projection onto the first $d$ coordinates by permuting the coordinates if necessary.
	Let $f:\pi(U) \to X \cap U$ be the definable $\mathcal C^r$ map defined by $f(x)=(x,\varphi(x))$.
	There exists a definable closed subset $D$ of $U$ such that $\dim D < d$ and $f$ is of class $\mathcal C^{r'}$ on $U \setminus D$ by Lemma \ref{lem:dim3}.
	It is obvious that any point in $U \cap M \setminus f(D)$ is $r'$-regular in $M$ by Lemma \ref{lem:regular}.
	This implies the inclusion $Z' \cap U \subseteq f(D)$.
	On the other hand, we have $\dim f(D) \leq \dim D < d$ by Corollary \ref{cor:dim1}(1).
	We have proven that $\dim Z' \cap U <d$ as desired.
\end{proof}

\begin{proposition}\label{prop:independence}
Let $r$ and $r'$ be positive integers.
Consider a d-minimal expansion of an ordered field $\mathcal F=(F,<,+,\cdot,0,1,\ldots)$.
Let $X$ be a definable subset of $F^n$.
If $X$ has an $r$-partition sequence of length $l$, it also has an $r'$-partition sequence of length $l$.
In particular, the partition degree $\mypdeg(X)$ is independent of the choice of $r$. 
\end{proposition}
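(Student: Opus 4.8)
The plan is to start from an $r$-partition sequence $X_{-1}, X_0, \ldots, X_l$ of $X$ and perform surgery that converts each top-dimensional piece into a $\mathcal C^{r'}$ submanifold while keeping the number of pieces equal to $l$ and preserving the nested openness structure. Set $d=\dim X$. First I would apply Lemma \ref{lem:mfd_rr} to each piece $X_i$ with $0\le i\le l$: since $X_i$ is a definable $\mathcal C^r$ submanifold of dimension $d$, there is a definable closed subset $Z_i\subseteq X_i$ with $\dim Z_i<d$ such that $X_i\setminus Z_i$ is a definable $\mathcal C^{r'}$ submanifold. The crucial idea is not to remove each $Z_i$ locally but to remove a \emph{single} ambient-closed set all at once: put $C=\bigcup_{i=0}^l \mycl_{F^n}(Z_i)$. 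By Lemma \ref{lem:dim4} each $\mycl_{F^n}(Z_i)$ has dimension $<d$, so $C$ is a definable closed subset of $F^n$ with $\dim C<d$, and $Z_i\subseteq C$ for every $i$.

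Next I would define the new sequence by $X_i'=X_i\setminus C$ for $0\le i\le l$ and $X_{-1}'=X\setminus\bigcup_{i=0}^l X_i'$, which is a partition of $X$ into $l+2$ definable pieces. Condition (1) holds because $X_{-1}'=X_{-1}\cup\bigcup_{i=0}^l (X_i\cap C)$ has dimension $<d$. For condition (2) observe that $Z_i\subseteq C$ gives $X_i'=(X_i\setminus Z_i)\cap(F^n\setminus C)$, an open subset of the $\mathcal C^{r'}$ submanifold $X_i\setminus Z_i$, hence itself a definable $\mathcal C^{r'}$ submanifold of dimension $d$ by Lemma \ref{lem:open_mfd}; it is nonempty of dimension $d$ since $\dim(X_i\cap C)<d$ forces $\dim(X_i\setminus C)=d$ by Proposition \ref{prop:dim}(2).

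The main obstacle, and the reason for introducing the common set $C$ rather than peeling off each $Z_i$ separately, is the openness requirement that $X_i'$ be open in $\bigcup_{j=-1}^i X_j'$. Writing $Y_i=\bigcup_{j=-1}^i X_j$, a direct computation gives $\bigcup_{j=-1}^i X_j'=Y_i\cup\bigcup_{k>i}(X_k\cap C)$, so that every point of the new union lying beyond the original $Y_i$ is contained in $C$. Since $X_i$ is open in $Y_i$, there is an ambient open set $W$ with $X_i=W\cap Y_i$; then $W\setminus C$ is ambient open and, being disjoint from $C$, meets $\bigcup_{j=-1}^i X_j'$ exactly in $(W\cap Y_i)\setminus C=X_i\setminus C=X_i'$. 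This shows $X_i'$ is open in $\bigcup_{j=-1}^i X_j'$ and completes the verification that $X_{-1}',X_0',\ldots,X_l'$ is an $r'$-partition sequence of length $l$.

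Finally, the ``in particular'' clause follows formally: applying this conversion with the roles of $r$ and $r'$ interchanged shows that $X$ admits an $r$-partition sequence of length $l$ if and only if it admits an $r'$-partition sequence of length $l$, so the minimal achievable length is the same, and hence $\mypdeg(X)$ does not depend on $r$.
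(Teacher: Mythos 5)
Your proposal is correct and follows essentially the same route as the paper's proof: apply Lemma \ref{lem:mfd_rr} to each top-dimensional piece, gather all the exceptional sets into a single closed set of dimension $<d$ (the paper uses $\mycl_X(M_{-1}\cup\bigcup_i Z_i)$ where you use $\bigcup_i\mycl_{F^n}(Z_i)$, which serves the same purpose), and remove it from every piece so that closedness of the removed set preserves the nested openness conditions. Your verification of the openness condition is in fact spelled out in more detail than in the paper, and your argument handles both $r\le r'$ and $r>r'$ uniformly, whereas the paper dispatches $r\ge r'$ as a trivial separate case.
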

\begin{proof}
	The `in particular' part is obvious once the former part of the proposition is shown.
	We concentrate on the former part.
	
	If $r \geq r'$, the proposition is obvious because an $r$-partition sequence is always an $r'$-partition sequence.
	Consider the case in which $r<r'$.
	Set $d=\dim X$.
	Let $M_{-1}, M_0,\ldots, M_m$ be an $r$-partition sequence of $X$.
	Apply Lemma \ref{lem:mfd_rr} to $M_i$ for each $0 \leq i \leq m$, then there exists a definable closed subset $Z_i$ of dimension $<d$ and $N'_i:=M_i \setminus Z_i$ is a definable $\mathcal C^{r'}$ submanifold.
	Set $N_{-1}:=\mycl_X(M_{-1} \cup \bigcup_{i=0}^m Z_i)$, which is of dimension $<d$ by Lemma \ref{lem:dim4}.
	Set $N_i = N'_i \setminus N_{-1}$ for $0 \leq i \leq m$.
	The sequence  $N_{-1}, N_0,\ldots, N_m$ is an $r'$-partition sequence of $X$.
	We have proven the former part of the proposition.
\end{proof}

We have proven that the partition degree $\mypdeg(X)$ is independent of the choice of $r$. 
We use this fact without notice throughout this paper.

We next prove that $\mypdeg(X)$ is finite.
We need some preparation before we prove it.
We recall the definition of Cantor-Bendixson rank.
\begin{definition}[\cite{FM}]\label{def:lpt}
	We denote the set of isolated points in $S$ by $\myIso(S)$ for any topological space $S$.
	We set $\myLpt(S):=S \setminus \myIso(S)$.
	In other word, a point $x \in S$ belongs to $\myLpt(S)$ if and only if $x \in \mycl_S(S \setminus \{x\})$.
	
	Let $X$ be a nonempty closed subset of a topological space $S$.
	We set $X[ 0 ]=X$ and, for any $m>0$, we set $X [ m ] = \myLpt(X [ m-1 ])$.
	We say that $\myrank(X)=m$ if $X [ m ]=\emptyset$ and $X[ m-1 ] \neq \emptyset$.
	We say that $\myrank X = \infty$ when $X [ m ] \neq \emptyset$ for every natural number $m$.
	We set $\myrank(Y):=\myrank(\mycl_S(Y))$ when $Y$ is a nonempty subset of $S$ which is not necessarily closed.
\end{definition}

%The following lemma guarantees that a subset of $F$ definable in a d-minimal structure and having an empty interior always have a finite rank.
%
%\begin{lemma}\label{lem:very_basic}
%	Let $\mathcal F=(F,<,\ldots)$ be an expansion of a dense linear order without endpoints.
%	For a definable closed subset $A$ of $F$ with empty interior, $\myrank(A)=k$ if and only if $k$ is the least number of discrete sets whose union is $A$.
%\end{lemma}
%\begin{proof}
%	See \cite[1.3]{FM}.
%	The lemma is proven in \cite[1.3]{FM} when the underlying space is the set of reals.
%	We can prove the lemma in the same manner as it.
%	We omit the details.
%\end{proof}

We next introduce the extended rank function $\myedim$ for d-minimal structures, which was first introduced in \cite{Fujita}.

\begin{definition}[Extended rank]\label{def:extended_dim_dmin}
	Consider a d-minimal expansion of an ordered field $\mathcal F=(F,<,+,\cdot,0,1,\ldots)$.
	Let $\Pi(n,d)$ be the set of coordinate projections of $F^n$ onto $F^d$.
	Recall that $F^0$ is a singleton.
	We consider that $\Pi(n,0)$ is a singleton whose element is a trivial map onto $F^0$.
	Since $\Pi(n,d)$ is a finite set, we can define a linear order on it. 
	We denote it by $<_{\Pi(n,d)}$.
	Let $\mathcal E_n$ be the set of triples $(d,\pi,r)$ such that $d$ is a nonnegative integer not larger than $n$, $\pi \in \Pi(n,d)$ and $r$ is a positive integer.
	The linear order $<_{\mathcal E_n}$ on $\mathcal E_n$ is the lexicographic order.
	We abbreviate the subscript $\mathcal E_n$ of $<_{\mathcal E_n}$ in the rest of the paper, but it will not confuse readers.
	Let $X$ be a nonempty bounded definable subset of $F^n$.
	The triple $(d,\pi,r)$ is the \textit{extended rank} of $X$ and denoted by  $\myedim_n(X)$ if it is an element of $\mathcal E_n$ satisfying the following conditions:
	\begin{itemize}
		\item $d = \dim X$;
		\item the projection $\pi$ is a largest element in $\Pi(n,d)$ such that $\pi(X)$ has a nonempty interior;
		\item  $r$ is a largest positive integer such that there exists a definable open subset $U$ of $F^d$ contained in $\pi(X)$ such that the set $\pi^{-1}(x) \cap X$ is of dimension zero and the equality $\myrank(\pi^{-1}(x) \cap X) = r$ holds  for each $x \in U$.
	\end{itemize}
	Note that such a positive integer $r$ exists by Lemma \ref{lem:dmin_char} and Proposition \ref{prop:dim}(2).
	We set $\myedim_n(\emptyset)=-\infty$ and define that $-\infty$ is smaller than any element in $\mathcal E_n$.
	
	Let us consider the case in which $X$ is an unbounded definable subset of $F^n$.
	Let $\varphi:F \to (-1,1)$ be the definable homeomorphism given by $\varphi(x)=\frac{x}{\sqrt{1+x^2}}$.
	We define $\varphi_n:F^n \to (-1,1)^n$ by $\varphi_n(x_1,\dots, x_n)=(\varphi(x_1),\ldots, \varphi(x_n))$.
	We set $\myedim_n(X)=\myedim_n(\varphi_n(X))$.
	
	Note that, when $X$ is a bounded definable subset of $F^n$, the equality $\myedim_n(X)=\myedim_n(\varphi_n(X))$ holds by \cite[Lemma 7.15]{Fujita}.
\end{definition}

\begin{lemma}\label{lem:formula_rank}
	Consider a d-minimal expansion of an ordered field $\mathcal F=(F,<,+,\cdot,0,1,\ldots)$.
	Let $A$ and $B$ be definable subsets of $F^n$.
	The equality $\myedim_n(A \cup B) = \max\{\myedim_n(A),\myedim_n(B)\}$ holds.
\end{lemma}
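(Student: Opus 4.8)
The plan is to reduce to the bounded case and then determine the three coordinates of $\myedim_n(A\cup B)$ one at a time. If $A$ or $B$ is empty the identity is trivial, so assume both are nonempty. Since $\varphi_n$ is a definable homeomorphism and $\varphi_n(A\cup B)=\varphi_n(A)\cup\varphi_n(B)$, and since $\myedim_n$ is unchanged by $\varphi_n$ (by definition for unbounded sets and by \cite[Lemma 7.15]{Fujita} for bounded ones), it suffices to prove the equality for bounded $A,B$. Write $\myedim_n(A)=(d_A,\pi_A,r_A)$ and $\myedim_n(B)=(d_B,\pi_B,r_B)$; by symmetry I may assume $\myedim_n(A)\ge\myedim_n(B)$ and will show $\myedim_n(A\cup B)=\myedim_n(A)$.

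The first two coordinates are immediate. By Proposition \ref{prop:dim}(2), $\dim(A\cup B)=\max\{d_A,d_B\}=d_A=:d$. Since $\pi(A\cup B)=\pi(A)\cup\pi(B)$ for every $\pi$, the `in particular' clause of Proposition \ref{prop:dim}(2) shows that $\pi\in\Pi(n,d)$ maps $A\cup B$ to a set with nonempty interior if and only if it does so for $A$ or for $B$; the largest such $\pi$ is therefore the larger of $\pi_A,\pi_B$, which is $\pi_A$, because $\myedim_n(A)\ge\myedim_n(B)$ forces $\pi_A$ to be at least as large as $\pi_B$ in the fixed order on $\Pi(n,d)$. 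Hence the projection coordinate of $\myedim_n(A\cup B)$ is $\pi_A=:\pi$, and it remains to identify the rank coordinate $r$.

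A pointwise formula for Cantor--Bendixson rank drives the last step: for definable $S,T$ one has $\myrank(S\cup T)=\max\{\myrank S,\myrank T\}$. Because $\mycl(S\cup T)=\mycl S\cup\mycl T$, it is enough to show $\myLpt(C\cup D)=\myLpt(C)\cup\myLpt(D)$ for closed $C,D$ and to iterate. In the Hausdorff space $F^n$ a point with $x\in\mycl(C\setminus\{x\})$ already lies in the closed set $C$, and with this both inclusions are routine; since $\myLpt(C)$ is again closed, induction gives $(C\cup D)[m]=C[m]\cup D[m]$ for all $m$, whence the rank formula.

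For the rank coordinate I would split according to whether $\pi(B)$ has nonempty interior. If it does not --- this covers $d_B<d$ as well as $d_B=d$ with $\pi_A$ strictly larger than $\pi_B$, since $\pi_B$ is the largest projection for which $\pi_B(B)$ has nonempty interior --- then $\dim\pi(B)<d$, so $\mycl(\pi(B))$ has empty interior by Lemma \ref{lem:dim4}; intersecting any witnessing open box with its complement annihilates every $B$-fibre, so the $(A\cup B)$-fibres and $A$-fibres agree there, and both inequalities $r\ge r_A$ and $r\le r_A$ follow at once. If $\pi(B)$ has nonempty interior, then necessarily $d_B=d$, $\pi_B=\pi_A$, and $r_A\ge r_B$. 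Using that the fibre-rank functions $x\mapsto\myrank(\pi^{-1}(x)\cap A)$ and $x\mapsto\myrank(\pi^{-1}(x)\cap B)$ are definable and, on the locus of $0$-dimensional fibres, take finitely many values (the boundedness that makes the $r$-coordinate well defined in Definition \ref{def:extended_dim_dmin}), I argue as follows. For $r\ge r_A$, begin with an open box on which the $A$-fibres have dimension $0$ and rank $r_A$, and delete the closures of the two sets where $\dim(\pi^{-1}(x)\cap B)\ge 1$ (small by Lemma \ref{lem:dim1}) and where $\myrank(\pi^{-1}(x)\cap B)>r_B$ (small by maximality of $r_B$); on what remains the union formula gives rank $\max\{r_A,\myrank(\pi^{-1}(x)\cap B)\}=r_A$. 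For $r\le r_A$, on any witnessing open set the union formula yields $\max\{\myrank(\pi^{-1}(x)\cap A),\myrank(\pi^{-1}(x)\cap B)\}=r$ at each point, so that set is the union of the two definable sets where the $A$- resp. $B$-fibre rank equals $r$; one of them has nonempty interior by Proposition \ref{prop:dim}(2), forcing $r\le r_A$ or $r\le r_B\le r_A$. I expect the case $\pi_A=\pi_B$ to be the main obstacle, precisely because it leans on the definability and finiteness of the fibre-rank function; I would invoke \cite{Fujita} for this rather than re-proving the uniform bound on Cantor--Bendixson ranks of $0$-dimensional definable fibres.
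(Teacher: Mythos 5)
Your argument is essentially correct, but the comparison with the paper is necessarily one-sided: the paper does not prove this lemma at all, it simply cites \cite[Proposition 7.19]{Fujita}, so your proposal is a genuine self-contained reconstruction rather than a variant of an in-paper argument. The reduction to bounded sets via $\varphi_n$, the treatment of the first two coordinates, and the case split on whether $\pi_A(B)$ has nonempty interior are all sound, and the key engine --- the pointwise formula $\myrank(S\cup T)=\max\{\myrank(S),\myrank(T)\}$ obtained from $\myLpt(C\cup D)=\myLpt(C)\cup\myLpt(D)$ for closed $C,D$ --- is correct (it is closedness, not Hausdorffness, that makes the inclusion $\subseteq$ work, since $\mycl(C\setminus\{x\})\subseteq C$, and closedness is preserved under $\myLpt$ so the identity iterates). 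Two small points. First, when $d_B<d_A$ the phrase ``$\pi_A$ is at least as large as $\pi_B$ in the fixed order on $\Pi(n,d)$'' does not parse, since $\pi_B\in\Pi(n,d_B)$; but the intended conclusion is still right because $\pi(B)$ then has empty interior for every $\pi\in\Pi(n,d)$, and your rank-step case (i) already absorbs this situation. Second, deleting the closure of $\{x\;|\;\dim(\pi_A^{-1}(x)\cap B)=0,\ \myrank(\pi_A^{-1}(x)\cap B)>r_B\}$ requires both the uniform definability of the fibre Cantor--Bendixson rank and a uniform finite bound on it over the locus of zero-dimensional fibres; you flag this and defer to \cite{Fujita}, which is exactly the black box the paper itself invokes (see the remark at the end of Definition \ref{def:extended_dim_dmin} and the proof of Proposition \ref{prop:uniform_bound_eRank}, both of which extract such a bound from Lemma \ref{lem:dmin_char}). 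With those two points granted, your proof is complete, and it has the advantage over the paper's citation of making visible exactly which properties of $\myrank$ the additivity of $\myedim_n$ depends on.
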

\begin{proof}
	See \cite[Proposition 7.19]{Fujita}.
\end{proof}

\begin{theorem}\label{thm:finite_pdeg}
Consider a d-minimal expansion of an ordered field $\mathcal F=(F,<,+,\cdot,0,1,\ldots)$.
The partition degree $\mypdeg(X)$ is finite for every nonempty definable set $X$.
\end{theorem}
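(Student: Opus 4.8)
The plan is to argue by well-founded induction on the extended rank $\myedim_n(X)$. Since $\mypdeg$ does not depend on $r$ by Proposition~\ref{prop:independence}, I fix one positive integer $r$ once and for all. The order $<$ on $\mathcal{E}_n$ is the lexicographic order on triples $(d,\pi,r')$ whose first two entries range over the finite sets $\{0,1,\ldots,n\}$ and $\Pi(n,d)$ and whose third entry ranges over the positive integers; an infinite strictly descending chain would have to be eventually constant in its first two entries and then strictly decreasing in the third, which is impossible, so $<$ is well-founded, and adjoining $-\infty$ as a least element preserves this. Replacing $X$ by $\varphi_n(X)$, I may also assume that $X$ is bounded: $\varphi_n$ is a definable $\mathcal{C}^r$ diffeomorphism of $F^n$ onto $(-1,1)^n$, so it carries $r$-regular points to $r$-regular points and $r$-partition sequences to $r$-partition sequences, and it preserves $\myedim_n$ by the last remark of Definition~\ref{def:extended_dim_dmin}.

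Set $d:=\dim X$ and $M:=\myReg_r(X)$. By Corollary~\ref{cor:regular1} and Corollary~\ref{cor:regular2}, $M$ is a nonempty definable $\mathcal{C}^r$ submanifold of dimension $d$ that is open in $X$. If $X=M$, then $X$ is itself a $\mathcal{C}^r$ submanifold and $\mypdeg(X)=0$. The core of the proof is the single inequality
\[
\myedim_n(X\setminus M) < \myedim_n(X).
\]
Granting it, the induction closes at once. When $\dim(X\setminus M)<d$, the pair $X\setminus M,\,M$ is an $r$-partition sequence and $\mypdeg(X)=0$. When $\dim(X\setminus M)=d$, the induction hypothesis gives a shortest $r$-partition sequence $Y_{-1},Y_0,\ldots,Y_l$ of $X\setminus M$ (the ambient dimension is still $d$, so the submanifold pieces $Y_0,\ldots,Y_l$ have the correct dimension $d$); appending $M$ yields the sequence $Y_{-1},Y_0,\ldots,Y_l,M$ for $X$. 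Its lower openness conditions are inherited unchanged, since they refer only to unions that do not contain $M$, and the new top piece $M$ is open in $Y_{-1}\cup\cdots\cup Y_l\cup M=X$ because $\myReg_r(X)$ is open in $X$. Hence $\mypdeg(X)\le \mypdeg(X\setminus M)+1<\infty$.

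It remains to establish the displayed inequality, which is the step I expect to be the main obstacle. By Lemma~\ref{lem:formula_rank} applied to $X=M\cup(X\setminus M)$ we already have $\myedim_n(X\setminus M)\le\myedim_n(X)$, so only equality must be excluded. Write $\myedim_n(X)=(d,\pi,r_0)$. Equality would produce a definable open $U\subseteq F^d$ contained in $\pi(X\setminus M)$ on which $\pi^{-1}(x)\cap(X\setminus M)$ is zero-dimensional of Cantor--Bendixson rank $r_0$. Since fibers of $X\setminus M$ are contained in those of $X$, the fiber $\pi^{-1}(x)\cap X$ has rank $\ge r_0$ on $U$, while by maximality of $r_0$ it exceeds $r_0$ only on a subset of empty interior, that is, of dimension $<d$. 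The plan is to show in addition that, for every $x$ outside a definable subset of $U$ of dimension $<d$, every isolated point of $\pi^{-1}(x)\cap X$ is $r$-regular in $X$, hence lies in $M$. Removing these two exceptional sets leaves a dense subset of $U$ on which simultaneously $\myrank(\pi^{-1}(x)\cap X)=r_0$ and $\pi^{-1}(x)\cap(X\setminus M)\subseteq\myLpt(\pi^{-1}(x)\cap X)$; for any such $x$ the rank of the latter drops to at most $r_0-1$, contradicting the value $r_0$ forced on all of $U$.

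The heart of the matter, and the principal difficulty, is the regularity assertion just used: that, off a set of base dimension $<d$, the fiberwise-isolated points of $X$ over the \emph{maximal} projection $\pi$ are genuinely $r$-regular. The maximality of $\pi$ is what forces $\pi|_X$ to behave, generically, as a local $\mathcal{C}^r$ diffeomorphism near such points, so that $X$ is locally a $\mathcal{C}^r$ graph over $\pi$ there; I would prove this by selecting definable isolated-point functions, straightening them with Lemma~\ref{lem:dim2} and Lemma~\ref{lem:dim3}, and discarding the lower-dimensional locus where distinct sheets of $X$ meet (controlled through Lemma~\ref{lem:regular}, Corollary~\ref{cor:dim2} and Lemma~\ref{lem:dim4}). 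The model case $d=0$ already displays the mechanism transparently: there $M=\myReg_r(X)=\myIso(X)$, so $X\setminus M=\myLpt(X)$ and the Cantor--Bendixson rank drops by exactly one, which is the phenomenon the general argument must reproduce fiberwise.
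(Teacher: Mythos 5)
Your proposal is correct and follows essentially the same route as the paper: induction on the extended rank $\myedim_n(X)$, where the rank strictly decreases after removing the top regular stratum because fiberwise-isolated points of $X$ over the maximal projection $\pi$ are, off a base set of dimension $<d$, genuinely $r$-regular, so the fiberwise Cantor--Bendixson rank drops to at most $r_0-1$. The only differences are organizational --- you peel off all of $\myReg_r(X)$ at once and argue by contradiction, while the paper performs explicit successive reductions via its Claim and peels off the locus where $X$ is locally a $\mathcal C^p$ graph over $\pi$ --- and the step you flag as the principal difficulty is precisely the paper's central technical argument, carried out there by definably selecting an isolated-point function $\psi$ and a fiberwise isolation radius $\rho$ and making them continuous and of class $\mathcal C^p$ generically via Lemma \ref{lem:definable_selection}, Lemma \ref{lem:dim2} and Lemma \ref{lem:dim3}.
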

\begin{proof}
	We fix a positive integer $p$.
	We prove the theorem by induction on $(d,\pi,r)=\myedim_n(X)$.
	Let $F^n$ be the ambient space of $X$.
	We first consider the case in which $(d,\pi,r)$ is the smallest element.
	We have $d=0$, $r=1$ and $\pi$ is the unique map from $F^n$ onto the singleton $F^0$.
	The definable set $X$ is discrete by \cite[Proposition 7.19]{Fujita} and it is closed by \cite[Lemma 7.16]{Fujita}.
	In particular, every point $X$ is $p$-regular in $X$.
	It means $X$ is a definable $\mathcal C^p$ submanifold of $F^n$ by Lemma \ref{cor:regular2}.
	We have $\mypdeg(X)=0$ in this case.
	
	We next consider the other case.
	We may assume that $\pi$ is the projection onto the first $d$ coordinates by permuting the coordinates if necessary.
	We reduce to simpler cases step by step.
	We use the following claim in the reductions:
	\medskip
	
	\textbf{Claim.} Let $X$ be a nonempty definable set.
	Set $(d,\pi,r)=\myedim_n(X)$.
	Let $Z$ be a definable subset of $F^d$ such that $\myedim_n(X \cap \pi^{-1}(Z))<\myedim_n(X)$.
	Set $U= F^d \setminus \mycl(Z)$.
	The theorem holds for $X$ if the theorem holds for $X \cap \pi^{-1}(U)$.  
	\begin{proof}[Proof of Claim]
		Note that $X \setminus \pi^{-1}(U)=X \cap \pi^{-1}(\mycl(Z))$.
		We show that $\myedim_n(X \setminus \pi^{-1}(U))<\myedim_n(X)$.
		Assume for contradiction that $\myedim_n(X \setminus \pi^{-1}(U))=\myedim_n(X)$.
		There exists a nonempty definable open subset $V$ of $F^d$ contained in $\mycl(Z)$ such that $X \cap \pi^{-1}(x)$ is of dimension zero and of rank $r$ for each $x \in V$.
		At least one of $\partial(Z \cap V)$ and $Z \cap V$ has a nonempty interior by Proposition \ref{prop:dim}(2).
		The set $\partial(Z \cap V)$ has an empty interior by the definition of the frontier.
		The definable set $Z \cap V$ has a nonempty interior, but it contradicts the assumption that $\myedim_n(X \cap \pi^{-1}(Z))<\myedim_n(X)$.
		We have proven $\myedim_n(X \setminus \pi^{-1}(U))<\myedim_n(X)$.
		
		We first consider the case in which $\dim (X \setminus \pi^{-1}(U))<\dim X$.
		There exists a $p$-partition sequence $N_{-1},N_0,\ldots, N_m$ of $X \cap \pi^{-1}(U)$ of finite length by the assumption.
		The sequence $N_{-1} \cup  (X \setminus \pi^{-1}(U)), N_0,\ldots, N_m$ is a $p$-partition chain of $X$ of finite length.
		
		We next consider the case in which $\dim (X \setminus \pi^{-1}(U))=\dim X$.
		There exists a $p$-partition sequence $N_{-1},N_0,\ldots, N_m$ of $X \cap \pi^{-1}(U)$ of finite length by the assumption.
		We can construct a $p$-partition sequence $M_{-1},M_0,\ldots, M_m$ of $X \setminus \pi^{-1}(U)$ of finite length by the induction hypothesis.
		We set $M'_i= M_i \setminus \mycl(N_{-1})$ and $N'_j=N_j \setminus \mycl(N_{-1})$ for each $0 \leq i \leq m$ and $0 \leq j \leq l$.
		The definable sets $M'_i$ and $N'_j$ are definable $\mathcal C^p$ submanifolds of $F^n$ by Lemma \ref{lem:open_mfd}.
		We have $\dim (M_{-1} \cup (\mycl(N_{-1}) \cap X)) < \dim X$ by Proposition \ref{prop:dim} and Lemma \ref{lem:dim4}.
		Since $X \cap \pi^{-1}(U)$ is open in $X$, the sequence $M_{-1} \cup (\mycl(N_{-1}) \cap X), M'_{0}, \ldots, M'_{l}, N'_{0}, \ldots, N'_{m}$ is a $p$-partition sequence of $X$.
	\end{proof}
	
	Set $U'_1= \{x \in \pi(X)\;|\; \dim(\pi^{-1}(x) \cap X)=0 \text{ and }\myrank(\pi^{-1}(x) \cap X)=r\}$.
	It is obvious that $\myedim_n(X \setminus \pi^{-1}(\myint(U'_1)))< \myedim_n(X)$ by the definition of extended rank.
	By Claim, we may assume that $\pi^{-1}(x) \cap X$ is of dimension $\leq 0$ and 	$\myrank(\pi^{-1}(x) \cap X)=r$ for each $x \in \pi(X)$ by replacing $X$ with $X \cap \pi^{-1}(\myint(U'_1))$.

	We set  $Y=\bigcup_{x \in \pi(X)} \myIso(\pi^{-1}(x) \cap X)$.
	We next reduce to the case in which $Y$ is open in $X$.
	We consider two separate cases.
	When $\pi(X \setminus Y)$ has an empty interior, it is obvious that $\myedim_n(X \setminus Y)<\myedim_n(X)$.
	Using Claim, we may assume that $Y=X$ by replacing $X$ with $X \setminus \pi^{-1}(\mycl(\pi(X \setminus Y)))$.  
	The definable set $Y$ is open in $X$ in this case.
	We consider the case in which $\pi(X \setminus Y)$ has a nonempty interior.
	Apply Lemma \ref{lem:dim1} to $X \setminus Y$.
	There exists a definable dense open subset $U_2$ of $F^d$ such that 
	$\pi^{-1}(x) \cap (X \setminus Y)$ is of dimension $\leq 0$ and $\mycl(X \setminus Y) \cap \pi^{-1}(x) = \mycl((X \setminus Y) \cap \pi^{-1}(x)) $ for each $x \in U_2$. 
	For any point $x \in F^d$, we have $\mycl(\pi^{-1}(x) \cap (X \setminus Y)) \cap Y=\emptyset$ because any point $Y \cap \pi^{-1}(x)$ is isolated in $X \cap \pi^{-1}(x)$ by the definition of $Y$.
	This implies that, for each $x \in U_2$, $\pi^{-1}(x) \cap \mycl(X \setminus Y) \cap Y=\emptyset$.
	This implies that $Y \cap \pi^{-1}(U_2)$ is open in $X \cap \pi^{-1}(U_2)$.
	Since $\pi(X \setminus \pi^{-1}(U_2))$ has an empty interior, we have $\myedim_n(X \setminus \pi^{-1}(U_2))< \myedim_n(X)$.
	We can reduce to the case in which $Y$ is open in $X$ by replacing $X$ with $X \cap \pi^{-1}(U_2)$ using Claim.
	
	Let $M$ be the set of points $y$ in $X$ such that there exists an open box $B$ in $F^n$ containing the point $y$ and the intersection $X \cap B$ is the graph of a definable $\mathcal C^p$ map defined on $\pi(B)$.
	For any $x \in F^d$, any point in $M \cap \pi^{-1}(x)$ is an isolated point in $X \cap \pi^{-1}(x)$.
	It means that $M$ is contained in $Y$.
	We show that $\pi(Y \setminus M)$ has an empty interior.
	Assume for contradiction that $\pi(Y \setminus M)$ contains a nonempty open box $D$.
	By Lemma \ref{lem:definable_selection}, there exists a definable map $\psi: D \to F^{n-d}$ whose graph is contained in $Y \setminus M$.
	Since $Y \cap \pi^{-1}(x)$ is discrete for each $x \in D$, we can construct a definable function $\rho: D \to F$ such that $\rho(x)>0$ and $(\{x\} \times B_{n-d}(\psi(x), \rho(x)) )\cap Y = \{(x,\psi(x))\}$ for each $x \in D$ by Lemma \ref{lem:definable_selection}.
	Here, $B_{n-d}(\psi(x), \rho(x)) =\{z \in F^{n-d}\;|\; |z-\psi(x)|<\rho(x)\}$.
	We may assume that $\psi$ and $\rho$ are of class $\mathcal C^p$ by Lemma \ref{lem:dim3} by shrinking $D$ if necessary.
	However, it contradicts the definition of $M$ because, for each $x \in D$, there exists an open box $B$ containing the point $(x,\psi(x))$ such that the intersection $B \cap X$ is the graph of the restriction $\psi|_{\pi(B)}$ of $\psi$ to $\pi(B)$.
	We have demonstrated that $\pi(Y \setminus M)$ has an empty interior.
	We can reduce to the case in which $Y=M$ by Claim by considering $X \setminus \pi^{-1}(\mycl(\pi(Y\setminus M)))$ in place of $X$.
	It is also obvious that $Y$ is open in $X$ after the replacement.
	
	The definable set $M$ is a definable $\mathcal C^p$ submanifold of $F^n$ and $Y=M$ is open in $X$.
	It is obvious that $\myrank((X \setminus Y) \cap \pi^{-1}(x))<r$ for every $x \in \pi(X)$ by the definition of $Y$.
	This implies the inequality $\myedim_n(X \setminus M)<(d,\pi,r)=\myedim_n(X)$ because $Y=M$ by the assumption.
	If $\dim X \setminus M<d=\dim X$, the sequence $X \setminus M, M$ of length two is a $p$-partition sequence of $X$.
	Otherwise, there exists a $p$-partition sequence of $X \setminus M$ of finite length, say, $N_{-1}, N_0, \ldots, N_m$ by the induction hypothesis.
	The sequence $N_{-1}, N_0, \ldots, N_m, M$ is a $p$-partition sequence of $X$.
	This implies that $\mypdeg(X)<\infty$.
\end{proof}

Theorem \ref{thm:finite_pdeg} guarantees that $\mypdeg(X)<\infty$ for every set $X$ definable in d-minimal expansions of ordered fields.
We use this fact without notice in this paper.
We can extend Theorem \ref{thm:finite_pdeg} to the parametrized case as follows:

\begin{theorem}\label{thm:finite_pdeg_parametrized}
	Consider a d-minimal expansion of an ordered field $\mathcal F=(F,<,+,\cdot,0,1,\ldots)$.
	Let $\pi:F^{m+n} \to F^m$ be the coordinate projection onto the first $m$ coordinate.
	Let $p$ be a positive integer.
	Let $X$ be a definable subset of $F^{m+n}$. % such that $\dim (X \cap \pi^{-1}(x))=d$ for every $x \in \pi(X)$.
	
	There exist finitely many definable subsets $M_{-1}, M_0, \ldots, M_q$ such that the sequence $M_{-1} \cap \pi^{-1}(x), \ldots, M_{q} \cap \pi^{-1}(x)$ is a $p$-regular sequence of $X \cap \pi^{-1}(x)$ for every $x \in \pi(X)$ after removing empty sets from the sequence other than $M_{-1} \cap \pi^{-1}(x)$.	
	
	In particular, the inequality $\mypdeg(X \cap \pi^{-1}(x)) \leq q$ holds for every $x \in \pi(X)$.
\end{theorem}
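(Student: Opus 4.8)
The plan is to upgrade the extended-rank induction of Theorem \ref{thm:finite_pdeg} to a family version, the decisive new ingredient being that the fiberwise extended rank takes only finitely many values. Writing $X_x = X \cap \pi^{-1}(x)$ and identifying $\pi^{-1}(x)$ with $F^n$, I would first show that $\{\myedim_n(X_x) : x \in \pi(X)\}$ is a finite subset of $\mathcal{E}_n$. Indeed $\dim X_x$ ranges over the finite set $\{0,\ldots,n\}$ and the associated projection over the finite set $\bigcup_d \Pi(n,d)$, so the only issue is the rank component; this is bounded uniformly in $x$ by Lemma \ref{lem:dmin_char}, applied to the coordinate projections of $F^{m+n}$ that cut out the relevant zero-dimensional slices, since a union of at most $N$ discrete sets has Cantor-Bendixson rank bounded in terms of $N$. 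Moreover each level set $\{x \in \pi(X) : \myedim_n(X_x) = e\}$ is definable, because the three clauses of Definition \ref{def:extended_dim_dmin} are first-order in $x$ once the rank is bounded (the condition $\myrank(\,\cdot\,) \ge k$ is the non-emptiness of the $k$-th iterated limit-point set, which is a definable operation). Partitioning $\pi(X)$ accordingly, it suffices to treat a family all of whose fibers share one extended rank $e$.

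For such a family I would argue by induction on $e$ in the lexicographic order on $\mathcal{E}_n$, mirroring the absolute proof step by step inside $F^{m+n}$ so that every set produced is definable with the parameter built in. In the least case each $X_x$ is discrete and closed, hence a definable $\mathcal{C}^p$ submanifold of dimension $0$, and one takes $M_{-1} = \emptyset$, $M_0 = X$. For the inductive step, the reductions packaged in the \textbf{Claim} of the proof of Theorem \ref{thm:finite_pdeg} --- discarding the part of $X$ lying over a lower-dimensional subset of the base of the extended-rank projection --- are themselves definable in $x$ and so go through uniformly. After these reductions I would set $M$ to be the definable set of $(x,y) \in X$ such that, inside $X_x$, the point $y$ admits a local graph presentation over the extended-rank projection by a $\mathcal{C}^p$ map. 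The fiberwise content of the absolute argument then shows that for every such $x$ the slice $M_x$ is a $\mathcal{C}^p$ submanifold of dimension $d$ open in $X_x$ and that $\myedim_n((X \setminus M)_x) < e$.

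The residual family $X \setminus M$ again has fibers of only finitely many extended ranks, each strictly below $e$; partitioning its base and invoking the induction hypothesis on each piece yields uniform families of bounded length, and prepending the single stratum $M$ gives a uniform family for the constant-rank family. Finally I would recombine the families produced over the finitely many top-level pieces $P_e$ of $\pi(X)$: padding each with empty sets to a common length $q$ and putting $M_i = \bigcup_e M_i^{(e)}$, where $M_i^{(e)} \subseteq \pi^{-1}(P_e)$. Because the $P_e$ are disjoint, over a fiber $x \in P_e$ one has $M_i \cap \pi^{-1}(x) = M_i^{(e)} \cap \pi^{-1}(x)$, so exactly one piece contributes and the other strata meet $\pi^{-1}(x)$ emptily; deleting those empty intersections (other than that of $M_{-1}$) recovers the $p$-partition sequence manufactured for $X_x$. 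This gives the displayed conclusion and, reading off the length, the bound $\mypdeg(X_x) \le q$.

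I expect the genuine difficulty to be uniformity bookkeeping rather than a new idea. One must verify that the definability and uniform boundedness of the rank component of $\myedim_n(X_x)$ really do follow from Lemma \ref{lem:dmin_char} and that the additivity of Lemma \ref{lem:formula_rank} behaves well under the recombination; and, more delicately, that every \emph{generic} excision in the absolute proof --- the dense open sets from Lemma \ref{lem:dim1}, the continuity and differentiability loci from Lemmas \ref{lem:dim2} and \ref{lem:dim3}, and the definable selections from Lemma \ref{lem:definable_selection} --- can be organised so that the exceptional locus is controlled for all fibers at once and the resulting strata are given by finitely many definable subsets of $F^{m+n}$. Checking that the single set $M$ simultaneously drops the extended rank of every fiber is the crux of transferring the absolute induction to the parametrized setting.
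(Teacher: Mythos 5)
Your proposal follows essentially the same route as the paper: a uniform bound on the fiberwise extended rank (the paper's Proposition \ref{prop:uniform_bound_eRank}), followed by a parametrized rerun of the induction of Theorem \ref{thm:finite_pdeg}, with the same intermediate Claim and the same definable set $M$ of fiberwise local $\mathcal C^p$-graph points; your preliminary partition of $\pi(X)$ into definable level sets of $\myedim_n(X_x)$ is a cosmetic variant of the paper's induction on the maximum fiberwise extended rank, and your recombination over a disjoint definable partition of the base is sound. Two small corrections to the bookkeeping you flag: the stratum $M$ must be \emph{appended} as the last (open) member of each fiberwise sequence, not prepended; and when you concatenate the inductively obtained family for $X \setminus M$ with $M$, the fibers $x$ with $\dim\bigl((X\setminus M)_x\bigr) < \dim X_x$ must have their entire residual slice absorbed into the $(-1)$-stratum rather than further stratified, since the middle members of a $p$-partition sequence of $X_x$ are required to have full dimension $\dim X_x$ --- this is exactly what the paper's auxiliary set $W=\{x \in \pi(X)\;|\; \dim (X_x \setminus M(x))<\dim X_x\}$ accomplishes, and your level-set partition of the base accommodates the same fix.
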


The proof of the above theorem is essentially the same as that of Theorem \ref{thm:finite_pdeg}, but it is more complicated. 
We first prove the following proposition:

\begin{proposition}\label{prop:uniform_bound_eRank}
	Consider a d-minimal expansion of an ordered field $\mathcal F=(F,<,+,\cdot,0,1,\ldots)$.
	Let $X$ be a nonempty definable subset of $F^{m+n}$ and $\pi:F^{m+n} \to F^m$ be the coordinate projection onto the first $m$ coordinates.
	There exists $(d,\rho,r) \in \mathcal E_n$ such that $\myedim_n(X_x) \leq (d,\rho,r)$ for every $x \in \pi(X)$ and $\myedim_n(X_x) = (d,\rho,r)$ for some $x \in \pi(X)$, where $X_x:=\{y \in F^n\;|\;(x,y) \in X\}$.
\end{proposition}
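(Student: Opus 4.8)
The plan is to prove that the image $I:=\{\myedim_n(X_x)\mid x\in\pi(X)\}$ is a \emph{finite} subset of $\mathcal E_n$. Since $<_{\mathcal E_n}$ is a linear order, a nonempty finite subset has a greatest element, and that element is exactly the triple $(d,\rho,r)$ asked for: it dominates every $\myedim_n(X_x)$ and is attained. As $X$ is nonempty, $I$ is nonempty, so only finiteness is at issue. The first two coordinates of an element of $\mathcal E_n$ already range over the finite set $\bigsqcup_{0\le e\le n}\{e\}\times\Pi(n,e)$, so $I$ is finite as soon as the third coordinate (a positive integer) is bounded independently of $x$. Thus everything reduces to a uniform upper bound on the ranks occurring in the triples $\myedim_n(X_x)$. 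Replacing $X$ by $(\mathrm{id}_{F^m}\times\varphi_n)(X)$, which is definable and does not change any $\myedim_n(X_x)$ by the definition of extended rank, I may assume that every fibre $X_x$ is bounded.

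Next I would isolate the relevant zero-dimensional fibres. The third coordinate of $\myedim_n(X_x)$ is the common value $\myrank(\rho^{-1}(y)\cap X_x)$, where $\rho$ is the projection occurring in $\myedim_n(X_x)$ and $y$ ranges over a definable open set on which this fibre is zero-dimensional. There are only finitely many pairs $(e,\rho)$ with $0\le e\le n$ and $\rho\in\Pi(n,e)$, so it suffices to produce, for each such pair, a bound on $\myrank(\rho^{-1}(y)\cap X_x)$ valid for all $(x,y)$ for which this set is zero-dimensional, and then take the maximum. Fixing $(e,\rho)$ and permuting the last $n$ coordinates so that $\rho\colon F^{e}\times F^{n-e}\to F^{e}$ is the projection onto the first factor, I consider the coordinate projection $q\colon F^{m+n}\to F^{m+e}$, $(x,y,z)\mapsto(x,y)$; its fibre over $(x,y)$ is precisely $\rho^{-1}(y)\cap X_x$.

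The decisive step is to turn the uniform finiteness of Lemma \ref{lem:dmin_char} into a uniform rank bound, and here I must be careful that $\myrank$ is computed through the closure. The closure of a zero-dimensional definable set can require more discrete pieces than the set itself, so I would apply Lemma \ref{lem:dmin_char} not to $X$ but to the fibrewise closure: let $\overline{X}$ be the definable set defined by $(x,y,z)\in\overline{X}$ if and only if $z\in\mycl(\rho^{-1}(y)\cap X_x)$, so that its $q$-fibre over $(x,y)$ is the closed set $C_{x,y}:=\mycl(\rho^{-1}(y)\cap X_x)$, which is still zero-dimensional by Lemma \ref{lem:dim4} whenever $\rho^{-1}(y)\cap X_x$ is. Applying Lemma \ref{lem:dmin_char} to $\overline{X}$ and $q$ yields one integer $N$ such that every zero-dimensional $C_{x,y}$ is a union of at most $N$ definable discrete sets. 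I would then invoke the elementary topological fact that a \emph{closed} set which is a union of $N$ discrete subspaces has Cantor--Bendixson rank at most $N$; this needs only that the whole union is closed (not the individual pieces) and is proved by induction on $N$, the point being that a point lying in a discrete piece is isolated in that piece and hence cannot be accumulated within it, so that $\myLpt(C_{x,y})$ is again closed and is a union of at most $N-1$ discrete sets. Consequently $\myrank(\rho^{-1}(y)\cap X_x)=\myrank(C_{x,y})\le N$ for all $(x,y)$ with $\rho^{-1}(y)\cap X_x$ zero-dimensional.

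Taking the maximum of these bounds over the finitely many pairs $(e,\rho)$ gives a single $R$ with $r\le R$ for every triple $(d,\rho,r)=\myedim_n(X_x)$. Hence $I$ is contained in the finite set of triples whose third coordinate is at most $R$, so $I$ is finite and its maximum is the desired $(d,\rho,r)$. I expect the main obstacle to be exactly the content of the third paragraph: recognizing that Lemma \ref{lem:dmin_char} must be fed the fibrewise closure $\overline{X}$ rather than $X$ itself, so that the purely topological rank bound becomes applicable, together with establishing that topological rank bound. The reductions and bookkeeping in the other steps are routine by comparison.
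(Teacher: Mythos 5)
Your proof is correct, and it rests on the same key ingredient as the paper's proof, namely Lemma \ref{lem:dmin_char}, but it is organized differently. The paper pins down the maximal pair $(d,\rho)$ directly --- $d$ is the maximum of the fibre dimensions and $\rho$ is the largest element of $\Pi(n,d)$ with $\myint\rho(X_x)\neq\emptyset$ for some $x$ --- and then applies Lemma \ref{lem:dmin_char} once, to the single family $\{(x,y)\;|\;y\in X_x\cap\rho^{-1}(Z_x)\}$, to bound the third coordinate only on the set $Y$ of parameters realizing $(d,\rho)$; parameters outside $Y$ are dominated for free. You instead bound the third coordinate uniformly over \emph{all} pairs $(e,\rho)$, conclude that the image of $x\mapsto\myedim_n(X_x)$ is a finite subset of the linearly ordered set $\mathcal E_n$, and take its maximum, which makes the attainment clause completely transparent (the paper's $r$ is only an upper bound and attainment is left implicit there). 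What your route buys, besides this, is precision at the one delicate point: since $\myrank$ of a non-closed set is by definition the rank of its closure, Lemma \ref{lem:dmin_char} really should be fed the fibrewise closure --- a single discrete set such as $\{1/n\;|\;n\geq 1\}$ already has closure of rank $2$ --- whereas the paper applies the lemma to the fibres themselves and passes to a rank bound without comment. The topological fact you invoke is true, but your one-line inductive step (that $\myLpt(C)$ is a union of at most $N-1$ discrete sets) is not obvious as stated; it is cleaner to prove $C[N]=\emptyset$ directly by a chain argument: a point of $C[N]$ forces a nested sequence of neighbourhoods containing points of $C[N-1],C[N-2],\ldots,C[0]$ lying in pairwise distinct discrete pieces, which exhibits $N+1$ of the $N$ pieces. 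With that lemma written out, your argument is complete.
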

\begin{proof}
	We may assume that $X_x$ is bounded for every $x \in \pi(X)$ by applying the definable map $\varphi_n$ defined in Definition \ref{def:extended_dim_dmin}.
	Set $d=\max\{\dim X_x\;|\; x \in \pi(X)\}$.
	Recall that a linear order in $\Pi(n,d)$ is given in Definition \ref{def:extended_dim_dmin}.
	Let $\rho $ be a largest element in $\Pi(n,d)$ such that $\rho(X_x)$ has a nonempty interior for some $x \in \pi(X)$.
	Set $Y=\{x \in \pi(X)\;|\; \myint \rho(X_x) \neq \emptyset\}$.
	By the definition of $Y$, we have $\myedim_n(X_x) < (d,\rho, r)$ for every $x \in \pi(X) \setminus Y$ and $r>0$.
	
	We next consider the case in which $x \in Y$.
	By the definition of $d$ and Corollary \ref{cor:dim1}(2), $Z_x := \{y \in \rho(X_x)\;|\; \dim (X_x \cap \rho^{-1}(y))=0\}$ has a nonempty interior for every $x \in Y$.
	%Apply Lemma \ref{lem:upperbound_rank} to the definable set $\{(x,y) \in Y\times F^n\;|\; y \in X_x \cap \rho^{-1}(Z_x)\}$.
	Apply Lemma \ref{lem:dmin_char} to the definable set $\{(x,y) \in Y\times F^n\;|\; y \in X_x \cap \rho^{-1}(Z_x)\}$.
	There exists a positive integer $r$ such that $\myrank (X_x \cap \rho^{-1}(y)) \leq r$ for every $y \in Z_x$ and $x \in Y$.  
	For each $x \in \pi(X)$, there exists $0<s \leq r$ such that $\{y \in Z_x\;|\; \myrank (X_x \cap \rho^{-1}(y)) =s\}$ has a nonempty interior by Proposition \ref{prop:dim}(2).
	The definable set $\{y \in \rho(X_x)\;|\; \myrank (X_x \cap \rho^{-1}(y)) =t\}$ is contained in $Z_x$ for each $t>0$.
	Therefore, we have $\myedim_n(X_x) \leq (d,\rho,r)$ for every $x \in Y$.
\end{proof}

We give a proof of Theorem \ref{thm:finite_pdeg_parametrized}.

\begin{proof}[Proof of Theorem \ref{thm:finite_pdeg_parametrized}]
	The `in particular' part is obvious once the former part of the theorem is proved.
	Set $X_x:=\{y \in F^n\;|\;(x,y) \in X\}$ for every $x \in \pi(X)$.
	We define $(M_{-1})_x, (M_0)_x$ and so on similarly.
	Note that the sequence $M_{-1} \cap \pi^{-1}(x), \ldots, M_{q} \cap \pi^{-1}(x)$ is a $p$-regular sequence of $X \cap \pi^{-1}(x)$ after removing empty sets other than $M_{-1} \cap \pi^{-1}(x)$ if and only if the sequence $(M_{-1})_x, \ldots, (M_{q})_x$ is a $p$-regular sequence of $X_x$ after removing empty sets other than $(M_{-1})_x$.
	We consider $X_x$ and $(M_i)_x$ rather than $X \cap \pi^{-1}(x)$ and $M_i \cap \pi^{-1}(x)$ for simplicity of notations.
	
	We prove the theorem by induction on $(d,\rho,r)=\max\{\myedim_n(X_x)\;|\; x \in \pi(X)\}$, which exists thanks to Proposition \ref{prop:uniform_bound_eRank}.
	We first consider the case in which $(d,\rho,r)$ is the smallest element in $\mathcal E_n$.
	We have $d=0$, $r=1$ and $\rho$ is the unique map from $F^n$ onto the singleton $F^0$.
	Fix a point $x \in \pi(X)$.
%	The definable set $X_x$ is discrete by \cite[Proposition 7.19]{Fujita} and it is closed by \cite[Lemma 7.16]{Fujita}.
%	In particular, every point $X_x$ is $p$-regular in $X_x$.
%	It means $X_x$ is a definable $\mathcal C^p$ submanifold of $F^n$.
	The definable set $X_x$ is a definable $\mathcal C^p$ submanifold of $F^n$ for the same reason as Theorem \ref{thm:finite_pdeg}.
	The sequence $\emptyset, X_x$ is a $p$-regular sequence of $X_x$.
	We have only to put $q=0$, $M_{-1}=\emptyset$ and $M_0=X$ in this case.
	
	We next consider the other case.
	We may assume that $\rho$ is the projection onto the first $d$ coordinates by permuting the coordinates if necessary.
	We reduce to simpler cases step by step.
	We use the following claim in the reductions:
	\medskip
	
	\textbf{Claim.} Let $X$ be a nonempty definable subset of $F^{m+n}$.
	Set $(d,\rho,r)=\max\{\myedim_n(X_x)\;|\; x \in \pi(X)\}$.
	Let $Z$ be a definable subset of $F^{m+d}$ such that $\myedim_n(X_x \cap \rho^{-1}(Z_x))<(d,\rho,r)$ for each $x \in \pi(X)$, where $Z_x = \{y \in F^d\;|\;(x,y) \in Z\}$.
	The theorem holds for $X$ if the theorem holds for $U:=\bigcup_{x \in \pi(X)} (\{x\} \times (X_x \cap \rho^{-1}(F^d \setminus \mycl(Z_x))))$
	\begin{proof}[Proof of Claim]
		The equality $(X \setminus U)_x=X_x \cap \rho^{-1}(\mycl(Z_x))$ holds for each $x \in \pi(X)$ by the definition of $U$.
		Fix an arbitrary $x \in \pi(X)$.
		We can prove the inequality $\myedim_n((X \setminus U)_x)<(d,\rho,r)$ in the same manner as Theorem \ref{thm:finite_pdeg}.
		We omit the details.
%		We show that $\myedim_n((X \setminus U)_x)<(d,\rho,r)$.
%		Assume for contradiction that $\myedim_n((X \setminus U)_x)=(d,\rho,r)$.
%		There exists a nonempty definable open subset $V$ of $F^d$ contained in $\mycl(Z_x)$ such that $X_x \cap \rho^{-1}(t)$ is of dimension zero and of rank $r$ for each $t \in V$.
%		At least one of $\partial(Z_x \cap V)$ and $Z_x \cap V$ has a nonempty interior by Proposition \ref{prop:dim}(2).
%		The set $\partial(Z_x \cap V)$ has an empty interior by the definition of the frontier.
%		The definable set $Z_x \cap V$ has a nonempty interior, but it contradicts the assumption that $\myedim_n(X_x \cap \rho^{-1}(Z_x))<(d,\rho,r)$.
%		We have proven $\myedim_n((X \setminus U)_x)<(d,\rho,r)$.
		
		There exist finitely many definable sets $L_{-1},L_0,\ldots, L_{q_1}$ such that the sequence $(L_{-1})_x, \ldots, (L_{q_1})_x$ is a $p$-regular sequence of $(X \setminus U)_x$ for each $x \in \pi(X)$ after removing empty sets other than $(L_{-1})_x$ by the induction hypothesis.
		There exist finitely many definable sets $M_{-1},M_0,\ldots, M_{q_2}$ such that the sequence $(M_{-1})_x, \ldots, (M_{q_2})_x$ is a $p$-regular sequence of $U_x$ for each $x \in \pi(X)$ after removing empty sets other than $(M_{-1})_x$ by the assumption.
		Set $W=\{x \in \pi(X)\;|\; \dim (X \setminus U)_x <d\}$.
		Put 
		\begin{align*}
			& N_{-1}=\left(\bigcup_{x \in W} \{x\} \times (X \setminus U)_x\right) \cup \left(\bigcup_{x \in \pi(X) \setminus W} \{x\} \times (L_{-1})_x\right)\\
			&\qquad  \cup \left( \bigcup_{x \in \pi(X)} \{x\} \times \mycl_{X_x}((M_{-1})_x)\right)\\
			&N_i = \bigcup_{x \in \pi(X) \setminus W} \{x\} \times  ((L_i)_x \setminus \mycl_{X_x}((M_{-1})_x))\\
			& N_{q_1+1+j} =\bigcup_{x \in \pi(X) } \{x\} \times  ((M_j)_x \setminus \mycl_{X_x}((M_{-1})_x))
		\end{align*}
		for each $0 \leq i \leq q_1$ and $0 \leq  j \leq q_2$.
		Since $U_x$ is open in $X_x$, the sequence $(N_{-1})_x, (N_{0})_x, \ldots, (N_{q_1+q_2+1})_x$ is a $p$-partition sequence of $X_x$ after removing empty sets other than $(N_{-1})_x$.
	\end{proof}
	
	Set $V_1(x)= \{t \in \rho(X_x)\;|\; \dim(\rho^{-1}(t) \cap X_x)=0 \text{ and }\myrank(\rho^{-1}(t) \cap X_x)=r\}$ and $U_1 = \bigcup_{x \in \pi(X)} \{x\} \times (X_x \cap \rho^{-1}(\myint(V_1(x))))$.
	It is obvious that $\myedim_n((X \setminus U_1)_x)< \myedim_n(X_x)$ for each $x \in \pi(X)$ by the definition of extended rank.
	By Claim, we may assume that $\rho^{-1}(t) \cap X_x$ is of dimension $\leq 0$ and 	$\myrank(\rho^{-1}(t) \cap X_x)=r$ for each $x \in \pi(X)$ and $t \in \rho(X_x)$ by replacing $X$ with $U_1$.
	
	We set  $Y(x)=\bigcup_{t \in \rho(X_x)} \myIso(\pi^{-1}(t) \cap X_x)$ for each $x \in \pi(X)$.
	We next reduce to the case in which $Y(x)$ is open in $X_x$ for each $x \in \pi(X)$.
	We construct a definable subset $Z_2$ of $F^{m+d}$ satisfying the following conditions, where $U_2(x) = X_x \setminus \rho^{-1}((Z_2)_x)$:
	\begin{enumerate}
		\item[(a)] $Y'(x):=\bigcup_{t \in \rho(U_2(x))} \myIso(\pi^{-1}(t) \cap U_2(x))$ is open in $U_2(x)$ and 
		\item[(b)] $\myedim_n(X_x \cap \rho^{-1}((Z_2)_x))<(d,\pi,r)$
	\end{enumerate}
	for every $x \in \pi(X)$.
	
	Set $W_2=\{x \in \pi(X)\;|\; \myint(\rho(X_x \setminus Y(x)))=\emptyset\}$.
	We put $Z_2(x)=\rho(X_x \setminus Y(x))$ when $x \in W_2$.
	When $x \in \pi(X) \setminus W_2$, we consider the definable set 
	\begin{align*}
	V_2(x)&=\myint(\{t \in F^d\;|\; \dim (X_x \cap \rho^{-1}(t)) \leq 0 \text{ and }\\
	 & \quad \mycl(X_x \setminus Y(x)) \cap \rho^{-1}(t) = \mycl((X_x \setminus Y(x)) \cap \rho^{-1}(t))\}).
	\end{align*}
	It is dense by Lemma \ref{lem:dim1}.
	We put $Z_2(x)=X_x \setminus \rho^{-1}(V_2(x))$ when $x \in \pi(X) \setminus W_2$.
	We set $Z_2=\bigcup_{x \in \pi(X)} \{x\} \times Z_2(x)$.
	We can prove that $Z_2$ fulfills conditions (a) and (b) in the same manner as the proof of Theorem \ref{thm:finite_pdeg}.
	We omit the details.
	We may assume that $Y(x)$ is open in $X_x$ for each $x \in \pi(X)$ by replacing $X$ with $\bigcup_{x \in \pi(X)} \{x\} \times U_2(x)$ using Claim.

	For each $x \in \pi(X)$, let $M(x)$ be the set of points $y$ in $X_x$ such that there exists an open box $B$ in $F^n$ containing the point $y$ and the intersection $X_x \cap B$ is the graph of a definable $\mathcal C^p$ map defined on $\rho(B)$.
	%For any $t \in F^d$, any point in $M(x) \cap \pi^{-1}(t)$ is an isolated point in $X_x \cap \pi^{-1}(t)$.
	We can prove that $M(x)$ is contained in $Y(x)$ and $\rho(Y(x) \setminus M(x))$ has an empty interior in the same manner as the proof of Theorem \ref{thm:finite_pdeg}.
	We omit the details.
	We can reduce to the case in which $Y(x)=M(x)$ by Claim by considering $\bigcup_{x \in \pi(X)} \{x\} \times (X_x \setminus \rho^{-1}(\mycl(\rho(Y(x)\setminus M(x)))))$ in place of $X$.
	It is also obvious that $Y(x)$ is open in $X_x$ after the replacement.
	
	The definable set $M(x)$ is a definable $\mathcal C^p$ submanifold of $F^n$ and $Y(x)=M(x)$ is open in $X_x$ for each $x \in \pi(X)$.
	It is obvious that $\myrank((X_x \setminus Y(x)) \cap \rho^{-1}(t))<r$ for every $t \in \rho(X_x)$ by the definition of $Y(x)$.
	This implies the inequality $\myedim_n(X_x \setminus M(x))<(d,\pi,r)$ because $Y(x)=M(x)$ by the assumption.
	There exists finitely many definable subsets $N'_{-1},N'_0,\ldots, N'_q$ such that 
	$(N'_{-1})_x,\ldots, (N'_q)_x$ is a $p$-regular sequence of $X_x \setminus M(x)$ for every $x \in \pi(X)$ after removing empty sets from the sequence other than $(N'_{-1})_x$.
	Set 
	\begin{align*}
		&W= \{x \in \pi(X)\;|\; \dim (X_x \setminus M(x))<\dim X_x\};\\
		&M=\bigcup_{x \in \pi(X)} \{x\} \times M(x);\\
		&N_{-1} = \bigcup_{x \in W}\{x\} \times (X_x \setminus M(x)) \cup \bigcup_{x \in \pi(X) \setminus W} \{x\} \times (N'_{-1})_x;\\
		&N_i= \bigcup_{x \in \pi(X) \setminus W} \{x\} \times (N'_i)_x \ (1 \leq i \leq q).
	\end{align*}
	The sequence $N_{-1}, N_0, \ldots, N_m, M$ is what we are looking for.
\end{proof}

As a corollary of Theorem \ref{thm:finite_pdeg_parametrized}, we give another characterization of d-minimal expansions of ordered fields.
\begin{corollary}
	Consider a definably complete expansion of an ordered field $\mathcal F=(F,<,+,\cdot,0,1,\ldots)$.
	The following are equivalent:
	\begin{enumerate}
		\item[(1)] $\mathcal F$ is d-minimal.
		\item[(2)]  Let $p$, $m$ and $n$ be positive integers.
		Let $\pi:F^{m+n} \to F^m$ be the coordinate projection onto the first $m$ coordinate.
		Let $X$ be a definable subset of $F^{m+n}$. 
		There exist finitely many definable subsets $M_{-1}, M_0, \ldots, M_q$ such that the sequence $M_{-1} \cap \pi^{-1}(x), \ldots, M_{q} \cap \pi^{-1}(x)$ is a $p$-regular sequence of $X \cap \pi^{-1}(x)$ for every $x \in \pi(X)$ after removing empty sets from the sequence other than $M_{-1} \cap \pi^{-1}(x)$.	
	\end{enumerate}
\end{corollary}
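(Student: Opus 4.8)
The plan is to prove the two implications separately, with essentially all of the content concentrated in $(2) \Rightarrow (1)$.

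For $(1) \Rightarrow (2)$ I would simply invoke Theorem \ref{thm:finite_pdeg_parametrized}: condition (2) is word-for-word its conclusion, read for each triple of positive integers $p$, $m$, $n$. So this direction needs no new argument beyond observing that the theorem is available for every $p$, $m$ and $n$ simultaneously.

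For $(2) \Rightarrow (1)$, since $\mathcal F$ is already assumed definably complete, I only need to verify the remaining clause in the definition of d-minimality: every definable subset of $F$ is the union of an open set and a finite number of discrete sets, with that number bounded independently of the parameters. The idea is to specialize condition (2) to $n=1$. First I would record two elementary facts about the ambient line, both obtained by unwinding the definition of a definable $\mathcal C^p$ submanifold: a one-dimensional definable $\mathcal C^p$ submanifold of $F$ is open in $F$ (the chart equation $\varphi(X\cap U)=V\cap(F^1\times\{\overline 0_0\})$ forces $X\cap U=U$), and a zero-dimensional definable $\mathcal C^p$ submanifold is discrete (a chart onto an open subset of the singleton $F^0$ forces every point to be isolated).

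Now, writing the definable family under consideration as $X\subseteq F^{m+1}$ with $\pi:F^{m+1}\to F^m$ (a parameter-free set being handled with a dummy coordinate so that $m\geq 1$), I would apply condition (2) with $n=1$ to obtain $M_{-1},M_0,\ldots,M_q$. Fix $x\in\pi(X)$ and set $d=\dim X_x\in\{0,1\}$. The pieces $(M_0)_x,\ldots,(M_q)_x$ are all definable $\mathcal C^p$ submanifolds of dimension $d$: if $d=1$ each is open, so $\bigcup_{i\geq 0}(M_i)_x$ is open in $F$; if $d=0$ each is discrete and $(M_{-1})_x$ is empty. The only obstruction to concluding is the remainder $(M_{-1})_x$, which merely has dimension $<d$, hence $\leq 0$, and is not a priori a finite union of discrete sets. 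I expect this to be the main point. To clear it I would apply condition (2) a second time, now to $M_{-1}$ itself: every fiber of $M_{-1}$ has dimension $\leq 0$, so the resulting $p$-regular sequence has an empty bottom piece and its remaining members are zero-dimensional submanifolds, i.e. discrete. Thus $(M_{-1})_x$ is a union of at most $q'+1$ discrete sets with $q'$ independent of $x$.

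Combining the cases, $X_x$ is in every instance the union of an open set and at most $N:=\max\{q+1,\,q'+1\}$ discrete sets, where $N$ depends only on the formula defining the family and not on $x$. This is exactly the uniform finiteness demanded by the definition of d-minimality, so $\mathcal F$ is d-minimal. The only technical care needed is the verification that applying (2) to the lower-dimensional set $M_{-1}$ genuinely yields discrete pieces, which is precisely where the two elementary submanifold facts enter.
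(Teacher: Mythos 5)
Your proof is correct, and both implications land where they should: $(1)\Rightarrow(2)$ is indeed exactly Theorem \ref{thm:finite_pdeg_parametrized}, and your argument for $(2)\Rightarrow(1)$ is sound. Your route for $(2)\Rightarrow(1)$ does, however, differ from the paper's in how the open part of each fiber and the residual set $M_{-1}$ are handled. The paper strips off the fiberwise interior first: it applies condition (2) not to $X$ but to the auxiliary family $Y=\{(x,y)\in X\;|\;y\notin\myint_F(X_x)\}$. Every fiber $Y_x$ then has empty interior in $F$, hence dimension $\leq 0$, so the bottom piece $(M_{-1})_x$ is automatically empty and a single application of (2) already produces only zero-dimensional, hence discrete, pieces; thus $X_x=\myint_F(X_x)\cup(M_0)_x\cup\cdots\cup(M_q)_x$ is an open set together with at most $q+1$ discrete sets, with no case split on $\dim X_x$. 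You instead apply (2) directly to $X$, read the one-dimensional pieces as open subsets of $F$ (this is the extra elementary fact your route needs and the paper's does not, and your verification of it from the chart equation is correct), and then dispose of the zero-dimensional residual $(M_{-1})_x$ by a second application of (2), which is legitimate and yields the uniform bound $\max\{q+1,q'+1\}$. The paper's version is slightly more economical (one application of (2), no dichotomy on the fiber dimension), while yours avoids introducing the auxiliary set $Y$; there is no gap in either argument.
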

\begin{proof}
	The implication $(1) \Rightarrow (2)$ is Theorem \ref{thm:finite_pdeg_parametrized}.
	
	We consider the opposite implication $(2) \Rightarrow (1)$.
	Consider the case in which $n=1$.
	Let $X$ be a definable subset of $F^{m+1}$ and set $Y=\{(x,y) \in F^m \times F\;|\; (x,y) \in X, y \notin \myint_F(X_x)\}$, where $X_x=\{y \in F\;|\; (x,y) \in X\}$ as usual.
	There exist finitely many definable subsets $M_{-1}, M_0, \ldots, M_q$ such that the sequence $(M_{-1})_x, \ldots, (M_{q})_x $ is a $1$-regular sequence of $Y _x$ for every $x \in \pi(X)$ after removing empty sets from the sequence other than $(M_{-1})_x$.	
	
	We show that $X_x$ is a union of open set and at most $(q+1)$ many discrete sets for every $x \in \pi(X)$.
	This means that $\mathcal F$ is d-minimal.
	Fix an arbitrary point $x \in \pi(X)$.
	By the definition of $Y$, the difference $(X_x \setminus Y_x)$ is an open set.
	We have $\dim (M_{-1})_x <0$, which means that $(M_{-1})_x$ is an empty set.
	The definable set $(M_i)_x$ is either an empty set or a definable $\mathcal C^1$ submanifold of dimension zero.
	It is obvious that $(M_i)_x$ is discrete in the latter case by the definition of definable $\mathcal C^1$ submanifolds.
	We have shown that $X_x$ is a union of open set and at most $(q+1)$ many discrete sets because $X_x=(X_x \setminus Y_x) \cup  (M_0)_x \cup \cdots \cup (M_{q})_x$. 
\end{proof}

\subsection{Basic properties of partition degree}\label{subsec:basic_partition}
We introduce several basic properties of partition degree in this subsection.
We first introduce the following basic inequality:
\begin{lemma}\label{lem:partition_included}
	Consider a d-minimal expansion of an ordered field $\mathcal F=(F,<,+,\cdot,0,1,\ldots)$.
	Let $X \subseteq Y$ be definable sets of the same dimension.
	Then the inequality $\mypdeg(X) \leq \mypdeg(Y)$ holds.
\end{lemma}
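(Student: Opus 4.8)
The plan is to transport a minimal $r$-partition sequence of $Y$ to $X$ by intersecting it with $X$, repairing each piece inside the ambient manifold it lives in, and then sweeping all the low-dimensional debris into a single closed bottom set, exactly in the spirit of the proof of Proposition \ref{prop:independence}. Write $d=\dim X=\dim Y$, fix $r$, put $m=\mypdeg(Y)$, and let $Y_{-1},Y_0,\dots,Y_m$ be an $r$-partition sequence of $Y$ realizing this minimum. Setting $X_i=X\cap Y_i$, the inclusion $X\subseteq Y$ makes these sets a partition of $X$, and since $Y_i$ is open in $Y^{\le i}:=\bigcup_{j=-1}^i Y_j$, the intersection $X_i$ is open in $X^{\le i}:=X\cap Y^{\le i}=\bigcup_{j=-1}^i X_j$. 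The piece $X_{-1}$ has dimension $<d$, which is what I want of a bottom layer; the trouble is that for $i\ge 0$ the set $X_i$ is only a subset of the manifold $Y_i$ and need not be a manifold itself.

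The key step is therefore to repair each $X_i$ ($0\le i\le m$) inside $Y_i$. If $\dim X_i<d$ I put $G_i=\emptyset$ and treat $X_i$ as debris. If $\dim X_i=d$ I set $G_i=\myint_{Y_i}(X_i)$, which is open in $Y_i$ and hence a definable $\mathcal C^r$ submanifold of dimension $d$ by Lemma \ref{lem:open_mfd}, and I discard $B_i:=X_i\setminus G_i$. The crucial claim is that $\dim B_i<d$, and this is precisely where Corollary \ref{cor:mfd_same_dim} enters: if $\dim B_i=d$, then the definable subset $B_i$ of the manifold $Y_i$ would have nonempty interior in $Y_i$, producing a set that is open in $Y_i$, contained in $X_i$, and yet disjoint from $\myint_{Y_i}(X_i)$, which is absurd. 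Note also that $G_i$, being open in $Y_i$ and contained in $X_i$, is open in $X_i$ and hence open in $X^{\le i}$.

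Finally I assemble the sequence as in Proposition \ref{prop:independence}. Writing $B_{-1}:=X_{-1}$, I set $N_{-1}=\mycl_X\!\big(\bigcup_{i=-1}^m B_i\big)$, a closed subset of $X$ of dimension $<d$ by Proposition \ref{prop:dim}(2) and Lemma \ref{lem:dim4}, and $N_i=G_i\cap(X\setminus N_{-1})$ for $0\le i\le m$. Each nonempty $N_i$ is open in $G_i$, hence a definable $\mathcal C^r$ submanifold of dimension $d$ again by Lemma \ref{lem:open_mfd}. Since $X_{-1}$ and every $B_i$ lie in $N_{-1}$, one computes $\bigcup_{j\le i}N_j=X^{\le i}\setminus N_{-1}$, and combining the openness of $G_i$ in $X^{\le i}$ with the openness of $X\setminus N_{-1}$ in $X$ shows that $N_i$ is open in $N_{-1}\cup\bigcup_{j\le i}N_j$. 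Thus $N_{-1},N_0,\dots,N_m$ is an $r$-partition sequence of $X$ after deleting the empty $N_i$, and its length is at most $m$, whence $\mypdeg(X)\le m=\mypdeg(Y)$. I expect the only genuinely delicate point to be the dimension drop $\dim B_i<d$; everything after it is the bookkeeping of openness already rehearsed in Proposition \ref{prop:independence}.
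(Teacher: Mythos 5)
Your proof is correct and follows essentially the same route as the paper: intersect a minimal $r$-partition sequence of $Y$ with $X$, replace each piece by its interior inside the corresponding manifold $Y_i$, show the discarded part has dimension $<d$, and absorb all debris into a closed bottom layer. The only cosmetic difference is that you justify the dimension drop $\dim B_i<d$ via Corollary \ref{cor:mfd_same_dim} (a full-dimensional subset of a manifold has nonempty interior), whereas the paper cites Corollary \ref{cor:mfd_bdry}; your justification is if anything the more direct one.
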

\begin{proof}
	Set $d=\dim X=\dim Y$ and $m=\mypdeg(Y)$.
	Let $r$ be a positive integer.
	Let $N_{-1}, N_0,\ldots, N_m$ be an $r$-partition sequence of $Y$.
	Set $M'_i=\myint_{N_i}(N_i \cap X)$ and $Z_i=(N_i \cap X) \setminus M'_i$ for $0 \leq i \leq m$.
	We have $\dim Z_i<d$ by Corollary \ref{cor:mfd_bdry}.
	Set $M_{-1}=\mycl_X(N_{-1} \cup \bigcup_{i=0}^m Z_i)$.
	It is of dimension smaller than $d$ by Proposition \ref{prop:dim}(2) and Lemma \ref{lem:dim4}.
	For $0 \leq i \leq m$, the definable set $M_i:=M'_i \setminus M_{-1}$ is a definable $\mathcal C^r$ submanifold of dimension $d$ by Lemma \ref{lem:open_mfd} when $M_i$ is not empty.
%	Set $I=\{0 \leq i \leq m\;|\; M_i \neq \emptyset\}$ and let $\iota:\{j \in \mathbb Z\;|\; 0 \leq j \leq l\} \to I$ be a unique increasing bijection, where $l+1$ is the cardinality of the set $I$.  
	The sequence $M_{-1}, M_0, \ldots, M_m$ is an $r$-partition sequence of $X$ after removing empty sets other than $M_{-1}$ from the sequence.
	It means that $\mypdeg(X) \leq \mypdeg(Y)$.
\end{proof}

We also need the following lemma:
\begin{lemma}\label{lem:small_gap}
	Consider a d-minimal expansion of an ordered field $\mathcal F=(F,<,+,\cdot,0,1,\ldots)$.
	Let $X$ and $Y$ be definable subsets of $F^n$ of the same dimension.
	If $\dim ((X \setminus Y) \cup (Y \setminus X)) < \dim X$, the equality $\mypdeg(X)=\mypdeg(Y)$ holds.
\end{lemma}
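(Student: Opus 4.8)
The plan is to funnel both $X$ and $Y$ through their intersection $Z := X \cap Y$ and show $\mypdeg(X) = \mypdeg(Z) = \mypdeg(Y)$. First I would set $d := \dim X = \dim Y$ and record that $\dim Z = d$: since $X \setminus Z = X \setminus Y$ has dimension $< d$ by hypothesis, the formula $\dim X = \max\{\dim Z, \dim(X \setminus Z)\}$ of Proposition \ref{prop:dim}(2) forces $\dim Z = d$. As $Z \subseteq X$ and $Z \subseteq Y$ are inclusions of sets of equal dimension, Lemma \ref{lem:partition_included} already delivers $\mypdeg(Z) \leq \mypdeg(X)$ and $\mypdeg(Z) \leq \mypdeg(Y)$. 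It therefore suffices to establish the reverse inequalities $\mypdeg(X) \leq \mypdeg(Z)$ and, symmetrically, $\mypdeg(Y) \leq \mypdeg(Z)$.

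The heart of the matter is an \emph{absorption step}: if $X'$ is definable with $\dim X' = d$ and $W$ is definable with $\dim W < d$, then $\mypdeg(X' \cup W) \leq \mypdeg(X')$. Taking $X' = Z$ and $W = X \setminus Y$ (so $X' \cup W = X$) yields $\mypdeg(X) \leq \mypdeg(Z)$, and $W = Y \setminus X$ handles $Y$. To prove the absorption step I would fix a positive integer $r$, take an $r$-partition sequence $N_{-1}, N_0, \ldots, N_m$ of $X'$ of minimal length $m = \mypdeg(X')$, and set $M_{-1} := \mycl_{X' \cup W}(N_{-1} \cup W)$ together with $M_i := N_i \setminus M_{-1}$ for $0 \leq i \leq m$. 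Since $\dim(N_{-1} \cup W) < d$, Lemma \ref{lem:dim4} gives $\dim M_{-1} < d$, so condition (1) holds; and because $M_{-1}$ is closed in $X' \cup W$, each $M_i = N_i \cap \big((X' \cup W) \setminus M_{-1}\big)$ is open in the $\mathcal C^r$ submanifold $N_i$, hence is itself a definable $\mathcal C^r$ submanifold of dimension $d$ by Lemma \ref{lem:open_mfd}. Discarding the empty $M_i$ with $i \geq 0$ then gives a partition sequence of $X' \cup W$ of length at most $m$.

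The one point requiring care, and the step I expect to be the main obstacle, is condition (2): that each $M_i$ is open in $\bigcup_{j=-1}^{i} M_j$. Here I would observe that, because $N_{-1} \cup W \subseteq M_{-1}$, one has $\bigcup_{j=-1}^{i} M_j = M_{-1} \cup A_i$ with $A_i := \bigcup_{j=-1}^{i} N_j$. Choosing an open $O \subseteq F^n$ with $N_i = O \cap A_i$ (possible as $N_i$ is open in $A_i$), the set $O \cap \big((X' \cup W) \setminus M_{-1}\big)$ is open in $X' \cup W$, and a direct set-theoretic computation shows its trace on $M_{-1} \cup A_i$ is exactly $N_i \setminus M_{-1} = M_i$; this is the required openness. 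The union of the $M_j$ recovers all of $X' \cup W$ and the pieces are pairwise disjoint, so the construction is a genuine $r$-partition sequence. Combining the two inequalities with their symmetric counterparts yields $\mypdeg(X) = \mypdeg(Z) = \mypdeg(Y)$, which is the claim.
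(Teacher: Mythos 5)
Your argument is correct, but it is organized differently from the paper's. The paper proves $\mypdeg(Y)\leq\mypdeg(X)$ directly and concludes by symmetry: starting from an $r$-partition sequence $(M_{-1},M_0,\ldots,M_m)$ of $X$, it sets $N_i=M_i\setminus\mycl\bigl((X\setminus Y)\cup(Y\setminus X)\bigr)$ for $i\geq 0$ and $N_{-1}=Y\cap\bigl(M_{-1}\cup\mycl((X\setminus Y)\cup(Y\setminus X))\bigr)$, and checks (with the openness verification left to the reader) that this is an $r$-partition sequence of $Y$; deleting the closure of the symmetric difference performs the passage to $Y$ and the absorption of the low-dimensional discrepancy in a single step. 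You instead factor the statement through $Z=X\cap Y$, obtaining $\mypdeg(Z)\leq\mypdeg(X),\mypdeg(Y)$ from Lemma \ref{lem:partition_included} and the reverse inequalities from an absorption principle ($\mypdeg(X'\cup W)\leq\mypdeg(X')$ when $\dim W<\dim X'$), whose proof is the same kind of manipulation as the paper's --- push the closure of a low-dimensional set into the $(-1)$-piece and restrict the manifold pieces accordingly. What your route buys is modularity: the absorption step is a reusable fact in its own right (together with Lemma \ref{lem:partition_included} it shows partition degree is unchanged by adjoining or deleting sets of smaller dimension), and your explicit verification of condition (2) --- exhibiting $O\cap\bigl((X'\cup W)\setminus M_{-1}\bigr)$ as an open set of $X'\cup W$ tracing out $M_i$ on $\bigcup_{j=-1}^{i}M_j$ --- supplies exactly the detail the paper's ``easy to check'' elides. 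The only cosmetic remark is that the discarding of empty $M_i$ is vacuous: since $\dim N_i=d$ and $\dim M_{-1}<d$, Proposition \ref{prop:dim}(2) forces $\dim M_i=d$, so no piece ever becomes empty.
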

\begin{proof}
	Set $d=\dim X=\dim Y$.
	Fix a positive integer $r$.
	By symmetry, we have only to show $\mypdeg(Y) \leq \mypdeg(X)$.
	Set $m= \mypdeg(X)$.
	There exists an $r$-partition sequence $(M_{-1}, M_0,\ldots, M_m)$ of $X$ by the definition of partition degree.
	Set $N_i = M_i \setminus \mycl((X \setminus Y) \cup (Y \setminus X) )$ for $0 \leq i \leq m$.
	We have $\dim (\mycl((X \setminus Y) \cup (Y \setminus X) ))<d$ by the assumption and Lemma \ref{lem:dim4}.
	It implies that $\dim N_i =d$ by Proposition \ref{prop:dim}(2).
	Set $N_{-1}:=Y \cap (M_{-1} \cup \mycl((X \setminus Y) \cup (Y \setminus X) ))$.
	We have $\dim N_{-1}<d$ by Proposition \ref{prop:dim}(2).
	It is easy to check that $(N_{-1},N_0,\ldots, N_m)$ is an $r$-partition sequence of $Y$.
	It means that $\mypdeg(Y) \leq m$.
\end{proof}

The following proposition gives a procedure to generate an $r$-partition sequence of a definable set of minimum length.
\begin{proposition}\label{prop:regular_seq}
	Consider a d-minimal expansion of an ordered field $\mathcal F=(F,<,+,\cdot,0,1,\ldots)$.
	Let $r$ be a positive integer and $X$ be a definable subset of $F^n$.
	We define $X\langle i \rangle$ as follows for each $i \geq -1$:
	\begin{itemize}
		\item $X\langle -1 \rangle = X$;
		\item $X\langle i \rangle = X \langle i-1 \rangle \setminus \myReg_r(X \langle i-1 \rangle ) $.
	\end{itemize}
	There exists a nonnegative integer $m$ such that $\dim X \langle m \rangle < \dim X$ and the equality $\mypdeg(X)=m$ holds.
	In addition, $(X\langle m \rangle, \myReg_r^m(X),\ldots, \myReg_r^0(X))$ is an $r$-partition sequence of $X$, where $\myReg_r^i(X):=\myReg_r(X\langle i-1 \rangle)$ for $0 \leq i \leq m$.
\end{proposition}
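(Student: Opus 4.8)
The plan is to run the greedy recursion and compare it stratum by stratum to an optimal $r$-partition sequence, organizing everything as an induction on the value $p:=\mypdeg(X)$, which is finite by Theorem \ref{thm:finite_pdeg} and so makes the induction well-founded. Set $d=\dim X$ (with $X$ nonempty). I would first record the telescoping identity $X\langle i-1\rangle=\myReg_r(X\langle i-1\rangle)\sqcup X\langle i\rangle$ and the structural facts I need about $\myReg_r$: by Corollary \ref{cor:regular1} the set $\myReg_r(X\langle i\rangle)$ is definable, open in $X\langle i\rangle$, and of dimension $\dim X\langle i\rangle$; moreover it is a definable $\mathcal C^r$ submanifold of that dimension (nonempty when $X\langle i\rangle$ is). The submanifold claim holds because every point of $\myReg_r(X\langle i\rangle)$ is $r$-regular in $X\langle i\rangle$, hence, being a point of an open subset of the same dimension, $r$-regular in $\myReg_r(X\langle i\rangle)$ by Lemma \ref{lem:open_regular}, so Corollary \ref{cor:regular2} applies.

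Next I would settle the base case as the equivalence $\mypdeg(X)=0\iff\dim X\langle 0\rangle<d$. For the forward direction, an $r$-partition sequence $X_{-1},X_0$ has $X_0$ open in $X=X_{-1}\cup X_0$ and equal to a $\mathcal C^r$ submanifold of dimension $d$; then Lemma \ref{lem:open_regular} together with Corollary \ref{cor:regular2} forces $X_0\subseteq\myReg_r(X)$, so $X\langle 0\rangle\subseteq X_{-1}$ has dimension $<d$. Conversely, if $\dim X\langle 0\rangle<d$ then $(X\langle 0\rangle,\myReg_r(X))$ is itself an $r$-partition sequence, so $\mypdeg(X)=0$. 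In particular this shows that the first index $m$ with $\dim X\langle m\rangle<d$ is $0$ exactly when $p=0$.

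The heart of the proof is the recursion $\mypdeg(X\langle 0\rangle)=\mypdeg(X)-1$ under the hypothesis $\dim X\langle 0\rangle=d$ (equivalently $p\ge 1$). For the inequality ``$\le$'' I take an optimal sequence $X_{-1},\dots,X_p$; exactly as in the base case its top stratum is open in $X$ and a $\mathcal C^r$ submanifold of dimension $d$, so $X_p\subseteq\myReg_r(X)$ and therefore $X\langle 0\rangle\subseteq Y:=X\setminus X_p$, while $X_{-1},\dots,X_{p-1}$ is an $r$-partition sequence witnessing $\mypdeg(Y)\le p-1$. Since $\dim X\langle 0\rangle=d=\dim Y$, the monotonicity Lemma \ref{lem:partition_included} yields $\mypdeg(X\langle 0\rangle)\le\mypdeg(Y)\le p-1$. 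For ``$\ge$'' I take any $r$-partition sequence of $X\langle 0\rangle$ and append $\myReg_r(X)$ as a new top stratum; it is a nonempty $\mathcal C^r$ submanifold of dimension $d$ open in $X=X\langle 0\rangle\sqcup\myReg_r(X)$, so the enlarged sequence is an $r$-partition sequence of $X$ and $\mypdeg(X)\le\mypdeg(X\langle 0\rangle)+1$.

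Finally I would assemble these pieces. From $X\langle i\rangle=(X\langle 0\rangle)\langle i-1\rangle$ the greedy process on $X$ is that on $X\langle 0\rangle$ shifted by one, so whenever $\dim X\langle 0\rangle=d$ the terminating index satisfies $m(X)=1+m(X\langle 0\rangle)$. Inducting on $p$ then gives simultaneously that $m$ exists (termination) and that $m=p=\mypdeg(X)$: the base case is the equivalence above, and the inductive step combines the recursion $\mypdeg(X\langle 0\rangle)=p-1$ with the induction hypothesis applied to $X\langle 0\rangle$. That the displayed sequence $(X\langle m\rangle,\myReg_r^m(X),\dots,\myReg_r^0(X))$ is an $r$-partition sequence I would check directly from the telescoping identity: the union of its terms from index $-1$ up to index $i$ equals $X\langle m-i-1\rangle$, each $\myReg_r^{m-i}(X)=\myReg_r(X\langle m-i-1\rangle)$ is a nonempty $\mathcal C^r$ submanifold of dimension $d$ open in that union (as $\dim X\langle m-i-1\rangle=d$ for $i<m$), and $\dim X\langle m\rangle<d$. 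The step I expect to be most delicate is the ``$\le$'' half of the recursion, since it relies on correctly absorbing the top stratum of an optimal sequence into $\myReg_r(X)$ and on the dimension matching $\dim X\langle 0\rangle=d=\dim Y$ that licenses Lemma \ref{lem:partition_included}; the finiteness provided by Theorem \ref{thm:finite_pdeg} is precisely what keeps the induction on $p$ well-founded.
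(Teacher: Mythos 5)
Your proposal is correct and follows essentially the same route as the paper: both arguments hinge on showing the top stratum of an optimal $r$-partition sequence is contained in $\myReg_r(X)$ (via Lemma \ref{lem:open_regular} and Corollary \ref{cor:regular2}), deducing $\mypdeg(X\langle 0\rangle)=\mypdeg(X)-1$ from Lemma \ref{lem:partition_included} together with the concatenation bound, and then inducting. The only cosmetic differences are that you phrase the reverse inequality directly rather than by contradiction and you are slightly more explicit about the dimension hypothesis needed to invoke Lemma \ref{lem:partition_included}.
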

\begin{proof}
	Let $M_{-1},M_0,\ldots, M_m$ be an $r$-partition sequence of minimum length.
	It is obvious that $M_m \subseteq \myReg_r(X)$ by the definition of the set $\myReg_r(X)$, Lemma \ref{lem:open_regular} and Corollary \ref{cor:regular2}.
	The set $\myReg_r(X)$ is definable and open by Corollary \ref{cor:regular1}.
	It is a definable $\mathcal C^r$ submanifold of $F^n$ by Corollary \ref{cor:regular2}.
	
	We prove the proposition by induction on $m$.
	When $m=0$, we have $X \langle 0 \rangle \subseteq M_{-1}$.
	It implies that $\dim X \langle 0 \rangle  < \dim X$.
	The sequence $(X \langle 0 \rangle, \myReg_r(X))$ is an $r$-partition sequence by Lemma \ref{lem:open_mfd}, Corollary \ref{cor:regular1} and Corollary \ref{cor:regular2}.
	The proposition holds in this case.
	
	We next consider the case in which $m>0$.
	We have $\mypdeg(X \setminus M_m) = m-1$ by the definition of partition degree.
	Since $X \setminus \myReg_r(X) \subseteq X \setminus M_m$, the inequality $\mypdeg (X \setminus \myReg_r(X)) \leq m-1$ holds by Lemma \ref{lem:partition_included}.
	If $\mypdeg (X \setminus \myReg_r(X)) < m-1$, we can construct an $r$-partition sequence of $X \setminus \myReg_r(X)$ of length smaller than $m+1$.
	We can construct an $r$-partition sequence of $X$ of length smaller than $m+2$ by concatenating the sequence of length smaller than $m+1$ with $\myReg_r(X)$.
	It contradicts the assumption that $\mypdeg(X)=m$.
	We have demonstrated that $\mypdeg (X \langle 0 \rangle) =m-1$.
	%Corollary \ref{cor:regular1} and Corollary \ref{cor:regular2} imply that $\myReg_r(X)$ is a definable $\mathcal C^r$ manifold and open in $X$.
	Set $Y=X \langle 0 \rangle = X \setminus \myReg_r(X)$.
	We obviously have $Y \langle m-1 \rangle = X \langle m \rangle$ and $\myReg_r^j(Y)=\myReg_r^{j+1}(X)$ for $0 \leq j \leq m-1$.
	Apply the induction hypothesis to $Y$, we get an $r$-partition sequence $(Y\langle m-1 \rangle, \myReg_r^{m-1}(Y), \ldots, \myReg_r^0(Y), \myReg_r(X))= (X\langle m \rangle, \myReg_r^m(X), \ldots, \myReg_r^0(X))$ of $X$ of minimum length.
\end{proof}

Dimension is preserved under definable bijection, but partition degree is not.
We can say more.
We can always find a definable set of partition degree zero definably bijective to a given nonempty definable set.
\begin{proposition}\label{prop:always_pdeg_zero}
	Consider a d-minimal expansion of an ordered field $\mathcal F=(F,<,+,\cdot,0,1,\ldots)$.
	Let $X$ be a nonempty definable subset of $F^n$.
	There exist a definable subset $Y$ of $F^{n+1}$ of $\mypdeg(Y)=0$ and a definable bijection $\varphi:X \to Y$.
\end{proposition}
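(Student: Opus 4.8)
The plan is to use the extra coordinate to pull the top-dimensional manifold pieces of $X$ apart into distinct parallel hyperplanes, so that the frontier interactions that force $\mypdeg(X)$ to be large simply disappear. Fix a positive integer $r$ and set $d=\dim X$ and $m=\mypdeg(X)$. By Proposition \ref{prop:regular_seq} the set $X$ admits an $r$-partition sequence, which I record as $P_{-1},P_0,\ldots,P_m$; thus $P_{-1},\ldots,P_m$ partition $X$, $\dim P_{-1}<d$, and each $P_i$ with $0\le i\le m$ is a definable $\mathcal C^r$ submanifold of $F^n$ of dimension $d$. Since $F$ is a dense linear order without endpoints it is infinite, so I can choose $m+2$ pairwise distinct elements $c_{-1},c_0,\ldots,c_m\in F$. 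Define $\varphi:X\to F^{n+1}$ by $\varphi(x)=(x,c_i)$ whenever $x\in P_i$. The map $\varphi$ is definable because the partition is definable, and it is injective because the first $n$ coordinates of $\varphi(x)$ recover $x$. Put $Y=\varphi(X)=\bigcup_{i=-1}^{m}(P_i\times\{c_i\})\subseteq F^{n+1}$; then $\varphi:X\to Y$ is the desired definable bijection, and it remains only to check $\mypdeg(Y)=0$.

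The heart of the argument is to show that every point of $Y_0:=\bigcup_{i=0}^{m}(P_i\times\{c_i\})$ is $r$-regular in $Y$. Fix $p=(x,c_i)$ with $x\in P_i$ and $0\le i\le m$. Since $P_i$ is a definable $\mathcal C^r$ submanifold of dimension $d$, Lemma \ref{lem:regular} together with Corollary \ref{cor:regular2} supplies a coordinate projection $\pi:F^n\to F^d$ and an open box $U\ni x$ in $F^n$ such that $P_i\cap U$ is the permuted graph under $\pi$ of a definable $\mathcal C^r$ map. Now choose an open interval $J\ni c_i$ excluding the finitely many other constants $c_j$; then the open box $U\times J$ meets $Y$ only in $(P_i\cap U)\times\{c_i\}$, because points of $P_j\times\{c_j\}$ with $j\ne i$ have last coordinate $c_j\notin J$. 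This intersection is the permuted graph, over the same $d$ coordinates, of the $\mathcal C^r$ map obtained by appending the constant component $c_i$, so Lemma \ref{lem:regular} shows that $p$ is $r$-regular in $Y$. In particular $Y_0\subseteq\myReg_r(Y)$, and the same computation shows $P_0\times\{c_0\}$ is a $\mathcal C^r$ submanifold of dimension $d$, whence $\dim Y=d$ by Lemma \ref{lem:dim_mfd} and Corollary \ref{cor:dim1}(1).

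It follows that $Y\setminus\myReg_r(Y)\subseteq Y\setminus Y_0=P_{-1}\times\{c_{-1}\}$, which has dimension at most $\dim P_{-1}<d=\dim Y$ by Corollary \ref{cor:dim1}(1). Taking $X_{-1}=Y\setminus\myReg_r(Y)$ and $X_0=\myReg_r(Y)$ — the latter a nonempty definable $\mathcal C^r$ submanifold of dimension $d$ that is open in $Y$ by Corollaries \ref{cor:regular1} and \ref{cor:regular2} — exhibits an $r$-partition sequence of $Y$ of length $0$, so $\mypdeg(Y)=0$, as required. The only delicate point is the regularity verification of the second paragraph: one must confirm that separating the pieces into the disjoint hyperplanes $\{x_{n+1}=c_i\}$ genuinely makes each top-dimensional piece locally isolated inside $Y$, so that the mutual frontier contacts among the $P_i$ in $X$ are no longer present. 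This is exactly what the choice of distinct constants $c_i$ guarantees, and it is the step that converts the multi-level partition of $X$ into a single-level one for $Y$.
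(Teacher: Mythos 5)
Your proposal is correct and takes essentially the same route as the paper: both separate the pieces of the partition sequence from Proposition \ref{prop:regular_seq} into distinct levels of the extra coordinate (the paper uses the integer labels $-1,0,\ldots,m$ where you use arbitrary distinct constants $c_{-1},\ldots,c_m$) and then observe that every top-dimensional piece becomes $r$-regular in the disjoint union. The only difference is that you spell out the local regularity verification via Lemma \ref{lem:regular} that the paper dismisses as obvious.
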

\begin{proof}
	Fix a positive integer $r$.
	Set $m=\mypdeg(X)$.
	Set $Y=X\langle m \rangle \times \{-1\} \cup \bigcup_{i=0}^m \myReg_r^i(X) \times \{i\}$.
	We can define a natural definable bijection between $X$ and $Y$.
	It is obvious that $\myReg_r(Y)=  \bigcup_{i=0}^m \myReg_r^i(X) \times \{i\}$.
	It means that $\mypdeg(Y)=0$.
\end{proof}

Partition degree is preserved under definable homeomorphism:
\begin{theorem}\label{thm:homeo_inv}
	Consider a d-minimal expansion of an ordered field $\mathcal F=(F,<,+,\cdot,0,1,\ldots)$.
	Let $X$ and $Y$ be definable sets which are definably homeomorphic to each other.
	Then the equality $\mypdeg(X)=\mypdeg(Y)$ holds.
	In addition, $\varphi(\myReg_r(X)) \setminus \myReg_r(Y)$ is of dimension smaller than $\dim Y$, where $r$ is a positive integer and $\varphi:X \to Y$  is a definable homeomorphism.
\end{theorem}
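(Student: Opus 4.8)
The plan is to establish the ``in addition'' part first, since it is the crux, and then to derive the equality $\mypdeg(X)=\mypdeg(Y)$ from it by an induction built on the characterization of partition degree in Proposition \ref{prop:regular_seq}. Throughout write $d=\dim X$; note $d=\dim Y$ because $\varphi$ and $\varphi^{-1}$ are definable maps and $\dim$ cannot increase under such maps (Corollary \ref{cor:dim1}(1)). Set $M=\myReg_r(X)$ and $N=\varphi(M)$. Since $M$ is open in $X$ (Corollary \ref{cor:regular1}) and $\varphi$ is a homeomorphism, $N$ is open in $Y$, and $\dim N=\dim M=d$. Consequently, by Lemma \ref{lem:open_regular}, a point of $N$ is $r$-regular in $Y$ exactly when it is $r$-regular in $N$, so $\varphi(\myReg_r(X))\setminus\myReg_r(Y)=N\setminus\myReg_r(N)$. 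The ``in addition'' claim therefore amounts to $\dim\bigl(N\setminus\myReg_r(N)\bigr)<d$.

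To prove this I would argue by contradiction, using that $N$ is the homeomorphic image of the $\mathcal C^r$ submanifold $M$. Suppose $\dim(N\setminus\myReg_r(N))=d$ and let $\psi=(\varphi|_M)^{-1}:N\to M$. Then $\psi(N\setminus\myReg_r(N))$ is a definable subset of $M$ of dimension $d$, so by Corollary \ref{cor:mfd_same_dim} it has nonempty interior $O$ in $M$ with $\dim O=d$. Pulling back, $N_1:=\psi^{-1}(O)$ is open in $N$, has dimension $d$, and is contained in $N\setminus\myReg_r(N)$. But $\myReg_r(N_1)$ has dimension $d$ by Corollary \ref{cor:regular1}, hence is nonempty, and since $N_1$ is open in $N$ of the same dimension, any point of $\myReg_r(N_1)$ is $r$-regular in $N$ by Lemma \ref{lem:open_regular}; this contradicts $N_1\subseteq N\setminus\myReg_r(N)$. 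Thus the ``in addition'' part holds. Applying it also to $\varphi^{-1}$ and transporting by the dimension-preserving map $\varphi$ gives the symmetric statement $\dim(\myReg_r(Y)\setminus\varphi(\myReg_r(X)))<d$.

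For the equality of partition degrees I would induct on $m:=\mypdeg(X)$. Put $X\langle 0\rangle=X\setminus\myReg_r(X)$ and $Y\langle 0\rangle=Y\setminus\myReg_r(Y)$. A direct computation shows that the symmetric difference of $\varphi(X\langle 0\rangle)=Y\setminus\varphi(\myReg_r(X))$ and $Y\langle 0\rangle$ equals $(\varphi(\myReg_r(X))\setminus\myReg_r(Y))\cup(\myReg_r(Y)\setminus\varphi(\myReg_r(X)))$, which has dimension $<d$ by the previous paragraph and Proposition \ref{prop:dim}(2). If $m=0$ then $\dim X\langle 0\rangle<d$ by Proposition \ref{prop:regular_seq}, so $\dim\varphi(X\langle 0\rangle)<d$, and the smallness of the symmetric difference forces $\dim Y\langle 0\rangle<d$; hence $\mypdeg(Y)=0$ by Proposition \ref{prop:regular_seq}. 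If $m\geq 1$ then $\dim X\langle 0\rangle=d$ and $\mypdeg(X\langle 0\rangle)=m-1$ by Proposition \ref{prop:regular_seq}; the restriction of $\varphi$ is a definable homeomorphism $X\langle 0\rangle\to\varphi(X\langle 0\rangle)$, so $\mypdeg(\varphi(X\langle 0\rangle))=m-1$ by the induction hypothesis. Since $\dim\varphi(X\langle 0\rangle)=d=\dim Y\langle 0\rangle$ and their symmetric difference has dimension $<d$, Lemma \ref{lem:small_gap} yields $\mypdeg(Y\langle 0\rangle)=m-1$, whence $\mypdeg(Y)=\mypdeg(Y\langle 0\rangle)+1=m$ again by Proposition \ref{prop:regular_seq}. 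This completes the induction.

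The main obstacle is the ``in addition'' part; everything else is bookkeeping around Proposition \ref{prop:regular_seq} and Lemma \ref{lem:small_gap}. The essential point there is that although $\varphi$ need not respect the $\mathcal C^r$ structure, the image $N$ still inherits enough tameness: being homeomorphic to a $\mathcal C^r$ submanifold lets Corollary \ref{cor:mfd_same_dim} produce, inside any $d$-dimensional piece of $N$, a relatively open subset, on which Corollary \ref{cor:regular1} guarantees regular points. I would take care to verify that $\mypdeg(X\langle 0\rangle)=m-1$ (so that the induction strictly descends) and that all the symmetric-difference dimension estimates are genuinely $<d$ rather than merely $\leq d$.
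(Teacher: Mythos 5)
Your proposal is correct and follows essentially the same route as the paper: the same contradiction argument for the ``in addition'' part (pulling the offending set back into the $\mathcal C^r$ submanifold $\myReg_r(X)$, extracting an open piece via Corollary \ref{cor:mfd_same_dim}, and locating regular points of its image with Corollary \ref{cor:regular1} and Lemma \ref{lem:open_regular}), followed by the same induction on $\mypdeg(X)$ built on Proposition \ref{prop:regular_seq} and Lemma \ref{lem:small_gap}. The only difference is organizational: the paper first shrinks $X$ and $Y$ to force $\varphi(\myReg_r(X))=\myReg_r(Y)$ exactly before inducting, whereas you carry the small symmetric difference into the inductive step and dispose of it there with Lemma \ref{lem:small_gap}; both work.
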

\begin{proof}
	Note that $\dim X=\dim Y$ by Corollary \ref{cor:dim1}(1).
	Set $d=\dim X$.
	Let $F^m$ and $F^n$ be the ambient spaces of $X$ and $Y$, respectively.
	Fix a positive integer $r$.
	Let $\varphi:X \to Y$ be a definable homeomorphism and $\psi:Y \to X$ be its inverse.
	
	We first show the `in addition' part of the theorem; that is,  $\varphi(\myReg_r(X)) \setminus \myReg_r(Y)$ is of dimension smaller than $d$.
	Set $Z=\varphi(\myReg_r(X)) \setminus \myReg_r(Y)$ and assume for contradiction that $\dim Z=d$.
	We have $Z \cap \myReg_r(Y)=\emptyset$.
	Since $\varphi$ is a definable bijection, we have $\dim \varphi^{-1}(Z)=d$ by Corollary \ref{cor:dim1}(1).
	Since $\myReg_r(X)$ is a definable $\mathcal C^r$ submanifold of dimension $d$ by Corollary \ref{cor:regular2}, the definable set $\varphi^{-1}(Z)$ has a nonempty interior in $\myReg_r(X)$ by Corollary \ref{cor:mfd_same_dim}.
	Let $U$ be a nonempty definable open subset of $\myReg_r(X)$ contained in $\varphi^{-1}(Z)$.
	The definable set $U$ is open in $X$ because $\myReg_r(X)$ is open in $X$ by Corollary \ref{cor:regular1}.
	Set $V=\varphi(U)$.
	It is open in $Y$ because $\varphi$ is a homeomorphism and we have $\dim V=d$ by Corollary \ref{cor:dim1}(1).
	Consider the definable set $\myReg_r(V)$.
	Any point in $\myReg_r(V)$ is regular in $Y$ by Lemma \ref{lem:open_regular} because $V$ is open in $Y$ and $\dim V=d$.
	It means that $\myReg_r(V) \subseteq \myReg_r(Y)$.
	However,  $\myReg_r(V)$ is contained in $Z$ and $Z$ has an empty intersection with $\myReg_r(Y)$, which is absurd.
	We have proven that $\dim Z<d$.
	The `in addition' part has been proved.
	
	Set $Z'=\psi(\myReg_r(Y)) \setminus \myReg_r(X)$.
	By symmetry, we have $\dim Z'<d$.
	By Lemma \ref{lem:dim4}, we have $\dim \mycl_X(Z)<d$ and $\dim \mycl_Y(Z') <d$.
	Set $X'=X \setminus (\mycl_X(Z) \cup \psi(\mycl_Y(Z')))$ and  $Y'=Y \setminus (\mycl_Y(Z') \cup \varphi(\mycl_X(Z)))$.
	We have $\dim X'=\dim Y'=d$ by Proposition \ref{prop:dim}(2) and Corollary \ref{cor:dim1}.
	The restriction of $\varphi$ to $X'$ gives a definable homeomorphism onto its image $Y'$.
	We also have $\varphi(\myReg_r(X'))=\myReg_r(Y')$ because the equalities $\myReg_r(X') = \myReg_r(X) \setminus  (\mycl_X(Z) \cup \psi(\mycl_Y(Z'))$ and $\myReg_r(Y')=\myReg_r(Y) \setminus (\mycl_Y(Z') \cup \varphi(\mycl_X(Z)))$ hold by Lemma \ref{lem:open_regular}.
	We also have $\mypdeg(X)=\mypdeg(X')$ and $\mypdeg(Y)=\mypdeg(Y')$ by  Lemma \ref{lem:small_gap}.
	Therefore, we may assume that $\varphi(\myReg_r(X))=\myReg_r(Y)$ by replacing $X$ and $Y$ with $X'$ and $Y'$, respectively.
	
	We prove the theorem by induction on $m=\mypdeg(X)$.
	When $m=0$, we have $\dim X \setminus \myReg_r(X)<d$ by Proposition \ref{prop:regular_seq}.
	Because $\varphi$ is a bijection and $\varphi(\myReg_r(X))=\myReg_r(Y)$, we get $\dim Y \setminus \myReg_r(Y)<d$ by Corollary \ref{cor:dim1}(1).
	This implies that $\mypdeg(Y)=0$ by Proposition \ref{prop:regular_seq}.
	
	When $m>0$, we have $\mypdeg (X \setminus \myReg_r(X)) = \mypdeg(X)-1$ and $\mypdeg (Y \setminus \myReg_r(Y)) = \mypdeg(Y)-1$ by Proposition \ref{prop:regular_seq}.
	We have $\mypdeg (X \setminus \myReg_r(X)) =\mypdeg (Y \setminus \myReg_r(Y)) $ by the induction hypothesis because there exists a definable homeomorphism between $X \setminus \myReg_r(X)$ and $Y \setminus \myReg_r(Y)$ by the assumption.
	We obtain $\mypdeg(Y)=\mypdeg(X)$.
\end{proof}

The following simple formula is not used in this paper, but it is worth mentioning.
\begin{theorem}\label{thm:times_partition_degree}
Consider a d-minimal expansion of an ordered field.
Let $X$ and $Y$ be definable sets.
Then the equality $\mypdeg(X \times Y)=\mypdeg(X)+\mypdeg(Y)$ holds.
\end{theorem}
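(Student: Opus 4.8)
The plan is to pin down the entire $r$-regular sequence of $X\times Y$ furnished by Proposition \ref{prop:regular_seq} in terms of the sequences of the two factors. Fix $r$ and write $a=\mypdeg(X)$, $b=\mypdeg(Y)$, $d=\dim X$, $e=\dim Y$; recall $\dim(X\times Y)=d+e$ by Corollary \ref{cor:dim1}(2). Put $S_i=\myReg_r^{i}(X)$ for $0\le i\le a$ and $T_j=\myReg_r^{j}(Y)$ for $0\le j\le b$, so that up to a definable set of dimension $<d$ one has $X=\bigsqcup_i S_i$, and up to dimension $<e$ one has $Y=\bigsqcup_j T_j$, each stratum being a definable $\mathcal C^r$ submanifold of full dimension. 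The heart of the argument is the claim, proved by induction on $k$, that
\[
\myReg_r^{k}(X\times Y)=\bigsqcup_{i+j=k} S_i\times T_j
\]
up to a definable set of dimension $<d+e$. Granting this, $(X\times Y)\langle m\rangle$ coincides up to lower dimension with $\bigsqcup_{i+j\ge m+1}S_i\times T_j$, which has dimension $d+e$ exactly when $m\le a+b-1$; hence the least $m$ with $\dim (X\times Y)\langle m\rangle<d+e$ is $a+b$, and Proposition \ref{prop:regular_seq} gives $\mypdeg(X\times Y)=a+b$.

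For the base case $k=0$, Corollary \ref{cor:product_reg} gives $\myReg_r(X\times Y)=\myReg_r(X)\times\myReg_r(Y)=S_0\times T_0$. For the inductive step I would set $W\langle k-1\rangle:=(X\times Y)\langle k-1\rangle$, which by the inductive hypothesis equals $\bigsqcup_{i+j\ge k}S_i\times T_j$ up to lower dimension and has dimension $d+e$. The inclusion $\myReg_r(W\langle k-1\rangle)\subseteq\bigsqcup_{i+j=k}S_i\times T_j$ (up to lower dimension) is the crux, and I would obtain it from the fiber criterion Corollary \ref{cor:product_reg2} applied to \emph{both} coordinate projections: for $(x,y)\in\myReg_r(W\langle k-1\rangle)$ with $x\in S_i$ and $y\in T_j$, the point $y$ is $r$-regular in the fiber $(W\langle k-1\rangle)_x$ and $x$ is $r$-regular in the fiber $(W\langle k-1\rangle)^y$. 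Up to dimension $<e$ the first fiber is $Y\langle k-1-i\rangle$, with regular locus $T_{k-i}$, and up to dimension $<d$ the second is $X\langle k-1-j\rangle$, with regular locus $S_{k-j}$; a short case check shows these two regularity conditions hold simultaneously only when $i+j=k$. The reverse inclusion is the easier \emph{sufficiency} direction: at a point of $S_i\times T_j$ with $i+j=k$, since $S_i$ is open in $X\langle i-1\rangle$ the set $W\langle k-1\rangle$ is, in a suitable neighborhood, the product of an open subset of the $\mathcal C^r$ submanifold $S_i$ with a regular chart of $T_j$, whence $r$-regularity follows from Corollary \ref{cor:product_reg} (or Corollary \ref{cor:regular2}).

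The technical difficulties I expect both concern the two ``up to lower dimension'' phrases. First, the remainders $X\langle a\rangle$ and $Y\langle b\rangle$, of dimension $<d$ and $<e$, pollute the clean product picture; they are absorbed by repeated use of Lemma \ref{lem:small_gap}, since they only alter $X\times Y$ and the sets $W\langle k-1\rangle$ by definable sets of dimension $<d+e$. Second, and more seriously, the fiber criterion is applied to $W\langle k-1\rangle$, whereas the inductive hypothesis only identifies $W\langle k-1\rangle$ with $\bigsqcup_{i+j\ge k}S_i\times T_j$ up to lower dimension, and a lower-dimensional modification of a set can change individual fibers drastically. I would resolve this by passing to a generic base point: the fiber-dimension formula (Proposition \ref{prop:dim}(4) together with Corollary \ref{cor:dim1}(2)) confines to a set of $x$ of dimension $<d$ those base points over which $(W\langle k-1\rangle)_x$ and $Y\langle k-1-i\rangle$ differ by a set of dimension $\ge e$, and over the remaining $x$ the two fibers have the same regular locus up to dimension $<e$ by the fiber-wise form of Lemma \ref{lem:small_gap}.

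The main obstacle is therefore organizing this genericity bookkeeping so that the ``corner'' points, where both coordinates lie in non-top strata, are seen to contribute no full-dimensional regular set. The lever that makes this work is exactly Corollary \ref{cor:product_reg2}: it forces a regular point of $W\langle k-1\rangle$ to lie on a single antidiagonal $i+j=k$, which is precisely the combinatorial statement behind the additivity $\mypdeg(X\times Y)=\mypdeg(X)+\mypdeg(Y)$.
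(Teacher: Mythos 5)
Your overall strategy coincides with the paper's: both arguments identify the regular strata of $X\times Y$ with unions of products of strata by induction on $k$, using Corollary \ref{cor:product_reg} for the base case, openness of $S_i\times T_j$ in $(X\times Y)\langle k-1\rangle$ together with Lemma \ref{lem:open_regular} for the inclusion $\bigcup_{i+j=k}S_i\times T_j\subseteq\myReg_r^k(X\times Y)$, and Corollary \ref{cor:product_reg2} for the reverse inclusion. The difference is that you only assert the identity up to a definable set of dimension $<d+e$, and this weakening creates exactly the gap you flag as the main obstacle: in the inductive step you must apply the fiber criterion to the actual set $(X\times Y)\langle k-1\rangle$, which you know only up to a lower-dimensional modification, and such a modification can change individual fibers drastically. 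Your proposed repair --- passing to generic base points and invoking a ``fiber-wise form of Lemma \ref{lem:small_gap}'' --- is not carried out: Lemma \ref{lem:small_gap} is a statement about partition degree, not about regular loci, and the auxiliary fact you actually need (that two definable sets of the same dimension differing in a set of smaller dimension have regular loci agreeing up to smaller dimension) is asserted but never proved. It is true --- it follows from Lemma \ref{lem:open_regular} and Lemma \ref{lem:dim4}, since the regular loci can only differ inside the closure of the symmetric difference --- but as written the inductive step does not close.

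The gap disappears if you strengthen the induction hypothesis to the \emph{exact} equality $\myReg_r^k(X\times Y)=\bigcup_{i+j=k}S_i\times T_j$, which is what the paper proves. The point you miss is that the remainders $X\langle a\rangle\times Y$ and $X\times Y\langle b\rangle$ never meet any $\myReg_r^k(X\times Y)$ at all: applying Corollary \ref{cor:product_reg2} to \emph{both} coordinate projections (as you already propose) shows that a regular point $(x,y)$ of $(X\times Y)\langle k-1\rangle$ must have $x\in S_i$ and $y\in T_j$ for genuine strata with $i+j=k$, because the fibers of the set $X\times Y\setminus\bigcup_{i+j<k}S_i\times T_j$ are \emph{exactly} sets of the form $Y\langle m\rangle$ (respectively $X\langle m\rangle$), with no error term, and their regular loci are exactly the $T_{k-i}$ (respectively $S_{k-j}$). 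With the exact identity in hand there is no genericity bookkeeping at all, and the ``pollution'' by the remainders is harmless: they are simply absorbed into the final low-dimensional piece $(X\times Y)\langle a+b\rangle$, whence Proposition \ref{prop:regular_seq} yields $\mypdeg(X\times Y)=a+b$ as you indicate.
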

\begin{proof}
	Set $Z=X \times Y$, $d=\mypdeg(X)$ and $e=\mypdeg(Y)$.
	Fix a positive integer $r$.
	We set $X_i := \myReg_r^i(X)$, $Y_j:= \myReg_r^j(Y)$ and $Z_k:=\myReg_r^k(Z)$ using the notations in Proposition \ref{prop:regular_seq} for $0 \leq i \leq d$, $0 \leq j \leq e$ and $0 \leq k \leq d+e$.
	Set $X'=X \setminus (\bigcup_{0 \leq i \leq d} X_i)$ and $Y'=Y \setminus (\bigcup_{0 \leq j \leq e} Y_j)$.
	We prove the equality 
	\begin{equation}\label{eq:zxy}
	Z_k= \bigcup_{0 \leq i \leq d, 0 \leq j \leq e, i+j=k}X_i \times Y_j
	\end{equation}
	for each $0 \leq k \leq d+e$. 
	This equality implies the theorem as follows:
	Assume that equality (\ref{eq:zxy}) holds.
	Set 
	$Z':=Z \setminus (\bigcup_{0 \leq k\leq d+e} Z_k) = (X' \times Y) \cup (X \times Y')$.
	We get $\dim Z' < \dim X + \dim Y$ by Proposition \ref{prop:dim} and Corollary \ref{cor:dim1}(2).
	We also have $\dim Z_k = \dim X + \dim Y$ for each $0 \leq k \leq d+e$ by Proposition \ref{prop:dim} and Corollary \ref{cor:dim1}(2).
	This implies that $\mypdeg(X \times Y)=d+e$ by Proposition \ref{prop:regular_seq}.
	The remaining task is to prove equality (\ref{eq:zxy}).
	
	Let us begin to prove equality (\ref{eq:zxy}) by induction on $k$.
	When $k=0$, the equality follows from Corollary \ref{cor:product_reg}.
	We consider the case in which $k>0$.
	Put $$Z'_k = \bigcup_{0 \leq i \leq d, 0 \leq j \leq e, i+j=k}X_i \times Y_j $$ for the simplicity of notations.
	We first demonstrate the inclusion $Z'_k\subseteq Z_k$.
	Set $$W_k=X \times Y \setminus \bigcup_{0 \leq i \leq d, 0 \leq j \leq e, i+j < k}X_i \times Y_j.$$
	The set $W_k$ coincides with $Z\langle k-1 \rangle$ in the notation of Proposition \ref{prop:regular_seq} by the induction hypothesis. 
	Fix an arbitrary $0 \leq i \leq d$ and $0 \leq j \leq e$ such that $i+j=k$.
	We show that $X_i \times Y_j$ is open in $W_k$.
	Since  $X_i$ is open in $X \setminus \bigcup_{0 \leq i' < i}X_{i'}$ by Proposition \ref{prop:regular_seq}, there exists an open set $U$ such that $U \cap (X \setminus \bigcup_{0 \leq i' < i}X_{i'}) = X_i$.
	There exists an open set $V$ such that $V \cap (Y \setminus \bigcup_{0 \leq j' < j}Y_{j'}) = Y_j$ for the same reason.
	It is easy to show that $(U \times V) \cap W_k = X_i \times Y_j$.
	This means that $X_i \times Y_j$ is open in $W_k$.
	
	It is obvious that $X_i \times Y_j$ is a definable $\mathcal C^r$ submanifold.
	Any point in $X_i \times Y_j$ is $r$-regular in $X_i \times Y_j$ by Corollary \ref{cor:regular2}.
	Since $X_i \times Y_j$ is open in $W_k$ and $\dim (X_i \times Y_j)=\dim W_k=\dim (X \times Y)$, any point in $X_i \times Y_j$ is $r$-regular in $W_k$ by Lemma \ref{lem:open_regular}.
	This means that $X_i \times Y_j \subseteq Z_k$.
	We have demonstrated the inclusion $Z'_k \subseteq Z_k$.
	
	We next prove the opposite inclusion.
	Take a point $(a,b) \in Z_k$.
	By Corollary \ref{cor:product_reg2}, the point $b$ is $r$-regular in $(W_k)_a:=\{y \in Y\;|\; (a,y) \in W_k\}$.
	The set $(W_k)_a$ is one of the forms $Y \langle i \rangle$ for $-1 \leq i \leq e$ in the notation of Proposition \ref{prop:regular_seq} by the definition of $W_k$.
	We get $(W_k)_a \neq Y \langle e \rangle$ by Corollary \ref{cor:product_reg2} because $\dim Y \langle e \rangle < \dim Y$.
	We get $b \in Y_j$ for some $0 \leq j \leq e$.
	By symmetry, we also get $a \in X_i$ for some $0 \leq i \leq d$.
	
	Fix $0 \leq i \leq d$ so that $a \in X_i$.
	We consider two separate cases.
	The first case is the case in which $e \leq k-i$.
	We have $(W_k)_a=Y \langle e \rangle$ in this case, which is absurd.
%	On the other hand, $(W_k)'_b := \{x \in X\;|\;(x,b) \in W_k\}$ contains $X_{i-1}$ and the point $a \in X_i$ is not an $r$-regular point in $(W_k)'_b$.
%	It contradicts Corollary \ref{cor:product_reg2}.
	%
	The remaining case is the case in which $e > k-i$.
	We have $(W_k)_a=Y \langle k - i \rangle$.
	Since $b$ is $r$-regular in $(W_k)_a$, we have $b \in Y_{k-i}$.
	It implies that $(a,b) \in X_i \times Y_{k-i} \subseteq Z'_k$.
	We have demonstrated the inclusion $Z_k \subseteq Z'_k$.
\end{proof}

The following proposition is also worth mentioning.
\begin{proposition}\label{prop:wide_cr}
	Let $r$ be a positive integer.
	Consider a d-minimal expansion of an ordered field whose universe is $F$.
	Let $X$ be a definable set and $f_1,\ldots, f_k:X \to F$ be definable functions.
	There exists a definable open subset $U$ of $X$ such that at least one of the inequalities $\dim X  \setminus U < \dim X$ and $\mypdeg X \setminus U < \mypdeg X$ holds, $U$ is a definable $\mathcal C^r$ submanifold and the function $f_i$ restricted to $U$ is of class $\mathcal C^r$ for each $1 \leq i \leq k$.
\end{proposition}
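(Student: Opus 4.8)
The plan is to build $U$ inside the regular part $\myReg_r(X)$ by cutting out the loci where the $f_i$ fail to be $\mathcal C^r$, and then to split into two cases according to whether $\mypdeg(X)$ is zero or positive. Throughout I assume $X$ is nonempty, since partition degree is defined only for nonempty sets. Set $d=\dim X$.

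First I would put $M=\myReg_r(X)$. By Corollary \ref{cor:regular1} and Corollary \ref{cor:regular2}, $M$ is a definable $\mathcal C^r$ submanifold of dimension $d$ which is open in $X$. For each $1\le i\le k$ I would apply Lemma \ref{lem:mfd_cr} to the restriction $f_i|_M$, obtaining a definable open subset $U_i$ of $M$ with $\dim(M\setminus U_i)<d$ on which $f_i$ is of class $\mathcal C^r$, and then set $U=\bigcap_{i=1}^k U_i$. By Proposition \ref{prop:dim}(2) we get $\dim(M\setminus U)<d$; the set $U$ is open in $M$, hence open in $X$ because $M$ is open in $X$; each $f_i|_U$ is of class $\mathcal C^r$; and $U$ is a definable $\mathcal C^r$ submanifold of dimension $d$ by Lemma \ref{lem:open_mfd}. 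This part is routine bookkeeping with the cited lemmas.

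The substance of the argument is to verify that removing $U$ strictly drops one of the two invariants. I would write $X\setminus U=(X\setminus M)\cup(M\setminus U)$ and recall that $X\setminus M=X\langle 0\rangle$ in the notation of Proposition \ref{prop:regular_seq}, while $\dim(M\setminus U)<d$. If $\mypdeg(X)=0$, then Proposition \ref{prop:regular_seq} gives $\dim(X\setminus M)<d$, so $\dim(X\setminus U)<d=\dim X$ and the first inequality holds. If $\mypdeg(X)=m>0$, then Proposition \ref{prop:regular_seq} yields $\mypdeg(X\setminus M)=m-1$; moreover $\myReg_r(X\setminus M)=\myReg_r^1(X)$ is one of the dimension-$d$ submanifolds appearing in the $r$-partition sequence of $X$, so $\dim(X\setminus M)=\dim\myReg_r(X\setminus M)=d$ by Corollary \ref{cor:regular1}. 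Since $U\subseteq M$, the symmetric difference of $X\setminus U$ and $X\setminus M$ is exactly $M\setminus U$, of dimension $<d$, and both sets have dimension $d$; hence Lemma \ref{lem:small_gap} gives $\mypdeg(X\setminus U)=\mypdeg(X\setminus M)=m-1<\mypdeg(X)$, so the second inequality holds. In either case at least one of the required inequalities is satisfied.

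The main obstacle is precisely the case $\mypdeg(X)>0$: one must be certain that shrinking $X\setminus M$ by the lower-dimensional $\mathcal C^r$-failure locus $M\setminus U$ does not change the partition degree. This is exactly what Lemma \ref{lem:small_gap} guarantees, but only after checking the dimension equality $\dim(X\setminus U)=\dim(X\setminus M)=d$, which in turn relies on reading off from the regular sequence of Proposition \ref{prop:regular_seq} that $\myReg_r^1(X)$ is genuinely $d$-dimensional. Everything else is a direct assembly of the preceding results.
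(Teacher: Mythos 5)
Your proposal is correct and follows essentially the same route as the paper: take $U$ inside $\myReg_r(X)$ by applying Lemma \ref{lem:mfd_cr} to each $f_i$ and intersecting, then observe that only a set of dimension $<\dim X$ has been removed from the regular part. The paper compresses the final step into a single citation of Proposition \ref{prop:regular_seq}; your case split on $\mypdeg(X)$ together with Lemma \ref{lem:small_gap} is just a careful unpacking of that same step, not a different argument.
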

\begin{proof}
	Recall that $\myReg_r(X)$ is a definable $\mathcal C^r$ submanifold by Corollary \ref{cor:regular2}.
	By Lemma \ref{lem:mfd_cr} and Proposition \ref{prop:dim}(2), there exists a definable open subset $U$ of $\myReg_r(X)$ such that the function $f_i$ restricted to $U$ is of class $\mathcal C^r$ for each $1 \leq i \leq k$ and $\dim \myReg_r(X) \setminus U < \dim \myReg_r(X)$.
	The definable set $U$ is a definable $\mathcal C^r$ submanifold by Lemma \ref{lem:open_mfd}.
	The definable set $U$ is open in $X$ and $\dim \myReg_r(X) \setminus U < \dim X$ because $\myReg_r(X)$ is open in $X$ and $\dim \myReg_r(X)=\dim X$ by Corollary \ref{cor:regular1}.
	At least one of the inequalities $\dim X  \setminus U < \dim X$ or $\mypdeg X \setminus U < \mypdeg X$ holds by Proposition \ref{prop:regular_seq}.
\end{proof}

\subsection{Hugeness and leanness}\label{subsec:huge_lean}
The notion of $d$-largeness is used in the study of definable groups by Wencel \cite{Wencel}, but this notion is not so useful for our purpose in d-minimal context.
We need alternative notions.
We introduce the notions of leanness, hugeness and largeness.
\begin{definition}
	Consider an expansion of a dense linear order without endpoints.
	Let $X$ and $Y$ be definable sets with $X \subseteq Y$.
	We say that $X$ is \textit{lean} in $Y$ if and only if $\dim(\myint_YX)<\dim Y$.
	We call that $X$ is \textit{huge} in $Y$ if $Y \setminus X$ is lean in $Y$.
	
	Let $r$ be a positive integer.
	Consider a definably complete expansion of an ordered field.
	Let $X$ and $Y$ be definable sets with $X \subseteq Y$.
	We say that $X$ is \textit{$r$-large} in $Y$ if $\myint_Y(X \cap \myReg_r(Y)) \neq \emptyset$.
\end{definition}

The following equivalence is useful:
\begin{proposition}\label{prop:lean_equiv}
	Consider a d-minimal expansion of an ordered field.
	Let $X$ and $Y$ be definable sets with $X \subseteq Y$.
	Let $r$ be a positive integer.
	The following are equivalent:
	\begin{enumerate}
		\item[(1)] The definable set $X$ is lean in $Y$;
		\item[(2)] The definable set $X$ is not $r$-large in $Y$;
		\item[(3)] $\dim (X \cap \myReg_r(Y)) < \dim Y$.
	\end{enumerate}
	In addition, conditions (1) through (3) hold when $\dim X<\dim Y$.
\end{proposition}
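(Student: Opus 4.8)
The plan is to set $d=\dim Y$ and extract from the excerpt the two facts about $\myReg_r(Y)$ that make everything routine: by Corollary \ref{cor:regular1} the set $\myReg_r(Y)$ is definable, open in $Y$, and of dimension $d$, and by Corollary \ref{cor:regular2} it is a definable $\mathcal C^r$ submanifold of dimension $d$. The openness yields the elementary but crucial identity $\myint_Y(A)=\myint_{\myReg_r(Y)}(A)$ for every definable $A\subseteq\myReg_r(Y)$, since for a set contained in the open subset $\myReg_r(Y)$ of $Y$ the interiors computed in $Y$ and in $\myReg_r(Y)$ coincide. I would record this identity at the outset, because both nontrivial equivalences pass through it.

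First I would prove $(2)\Leftrightarrow(3)$. Applying the identity with $A=X\cap\myReg_r(Y)$ gives $\myint_Y(X\cap\myReg_r(Y))=\myint_{\myReg_r(Y)}(X\cap\myReg_r(Y))$, so $X$ fails to be $r$-large in $Y$ exactly when $X\cap\myReg_r(Y)$ has empty interior in the submanifold $\myReg_r(Y)$. By Corollary \ref{cor:mfd_same_dim}, applied to $M=\myReg_r(Y)$ and $A=X\cap\myReg_r(Y)$, having nonempty interior in $\myReg_r(Y)$ is equivalent to $\dim A=d$; negating gives that $X$ is not $r$-large in $Y$ iff $\dim(X\cap\myReg_r(Y))<d$, which is precisely $(2)\Leftrightarrow(3)$.

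Next I would establish $(1)\Leftrightarrow(3)$ by contraposition in both directions. For $(3)\Rightarrow(1)$, suppose $\dim(\myint_Y X)=d$ and set $W=\myint_Y X$, an open subset of $Y$ with $\dim W=d=\dim Y$; by Lemma \ref{lem:open_regular} a point of $W$ is $r$-regular in $W$ iff it is $r$-regular in $Y$, so $\myReg_r(W)=W\cap\myReg_r(Y)$, which has dimension $d$ by Corollary \ref{cor:regular1}. Since $W\subseteq X$, this forces $\dim(X\cap\myReg_r(Y))=d$, i.e.\ $(3)$ fails. For $(1)\Rightarrow(3)$, suppose $\dim(X\cap\myReg_r(Y))=d$; the ``in addition'' clause of Corollary \ref{cor:mfd_same_dim} gives $\dim\myint_{\myReg_r(Y)}(X\cap\myReg_r(Y))=d$, and by the interior identity this set equals $\myint_Y(X\cap\myReg_r(Y))\subseteq\myint_Y X$, so $\dim(\myint_Y X)=d$ and $(1)$ fails.

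Finally the ``in addition'' clause is immediate: if $\dim X<\dim Y$ then $X\cap\myReg_r(Y)\subseteq X$ and monotonicity of dimension (from Proposition \ref{prop:dim}(2)) give $\dim(X\cap\myReg_r(Y))\le\dim X<\dim Y$, so $(3)$ holds. I do not anticipate a genuine obstacle; the one place deserving care is the interchange of interiors computed in $Y$ and in $\myReg_r(Y)$, which is exactly where openness of $\myReg_r(Y)$ in $Y$ is needed, together with the repeated translation, via Corollary \ref{cor:mfd_same_dim}, between ``nonempty interior in the submanifold'' and ``full dimension $d$''.
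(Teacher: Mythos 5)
Your proof is correct, and the two easy equivalences ($(1)\Rightarrow(2)$, $(2)\Leftrightarrow(3)$, and the ``in addition'' clause) are handled exactly as in the paper: openness of $\myReg_r(Y)$ in $Y$ gives $\myint_Y(A)=\myint_{\myReg_r(Y)}(A)$ for $A\subseteq\myReg_r(Y)$, and Corollary \ref{cor:mfd_same_dim} translates between nonempty interior in the submanifold $\myReg_r(Y)$ and full dimension. Where you genuinely diverge is the implication $(3)\Rightarrow(1)$, which is the substantive part. The paper proves it by contradiction using the full partition-sequence machinery: it takes the $r$-partition sequence $(Y\langle m\rangle,\myReg_r^m(Y),\ldots,\myReg_r^0(Y))$ from Proposition \ref{prop:regular_seq}, locates the minimal index $i_0\geq 1$ with $\dim(\myint_YX\cap\myReg_r^{i_0}(Y))=\dim Y$, carves out an open piece $W$ of $\myint_YX$ inside $\myReg_r^{i_0}(Y)$ avoiding the closures of the earlier strata, and then uses Corollary \ref{cor:dim2} and Lemma \ref{lem:open_regular} to produce a point that is simultaneously $r$-regular and not $r$-regular in $Y$. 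Your contrapositive is much shorter: from $\dim(\myint_YX)=\dim Y$ you set $W=\myint_YX$, observe via Lemma \ref{lem:open_regular} that $\myReg_r(W)=W\cap\myReg_r(Y)$, and conclude $\dim(X\cap\myReg_r(Y))\geq\dim\myReg_r(W)=\dim W=\dim Y$ by Corollary \ref{cor:regular1}. This is a legitimate and cleaner argument; it buys brevity and avoids partition degree entirely, at the cost of leaning on the (already established, nontrivial) d-minimal fact that $\dim\myReg_r(W)=\dim W$ for an arbitrary definable set $W$, which is essentially the same input the paper's longer argument unwinds by hand through the strata. There is no circularity, since both Lemma \ref{lem:open_regular} and Corollary \ref{cor:regular1} precede this proposition and do not depend on it.
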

\begin{proof}
	Note that $\myReg_r(Y)$ is open in $Y$ by Corollary \ref{cor:regular1}.
	Therefore, the equality $\myint_{\myReg_r(Y)}(X \cap \myReg_r(Y)) = \myint_{Y}(X \cap \myReg_r(Y))$ holds.
	The set $\myReg_r(Y)$ is a definable $\mathcal C^r$ submanifold of dimension $d$ by Corollary \ref{cor:regular2}.
	We use these facts without notice in the proof.
	
	We first consider the case in which $\dim X<\dim Y$.
	Conditions (1) through (3) hold in this case. 
	In fact, we have $\dim (\myint_YX) \leq \dim X<\dim Y$.
	We also have $\dim (X \cap \myReg_r(Y))  \leq \dim X < \dim Y$.
	It is equivalent to the condition that $\myint_Y(X \cap \myReg_r(Y)) = \myint_{\myReg_r(Y)}(X \cap \myReg_r(Y)) = \emptyset$ by Corollary \ref{cor:mfd_same_dim}.
	It implies that $X$ is not $r$-large in $Y$.

	We concentrate on the case in which $\dim X=\dim Y$ in the rest of the proof.
	Set $d=\dim Y$.
	We prove that condition (1) implies condition (2).
	We have $\dim \myint_{\myReg_r(Y)} (X \cap \myReg_r(Y))  = \dim \myint_Y  (X \cap \myReg_r(Y)) \leq \dim (\myint_YX) <d$ by the assumption.
	It implies that $\myint_Y  (X \cap \myReg_r(Y)) = \myint_{\myReg_r(Y)} (X \cap \myReg_r(Y))  = \emptyset$ by Corollary \ref{cor:mfd_same_dim}.
	
	We next show $(2) \Rightarrow (3)$.
	We have $\myint_{\myReg_r(Y)}(X \cap \myReg_r(Y)) =\myint_Y(X \cap \myReg_r(Y))=\emptyset$.
	It means $\dim (X \cap \myReg_r(Y))<d$ by Corollary \ref{cor:mfd_same_dim}.
	
	We finally prove $(3) \Rightarrow (1)$.
	Set $m=\mypdeg(Y)$ and $V=\myint_YX$.
	The sequence $$(Y\langle m \rangle, \myReg_r^m(Y), \ldots, \myReg_r^0(Y))$$ is an $r$-partition sequence by Proposition \ref{prop:regular_seq}.
	Assume for contradiction that $\dim V= d$.
	It is obvious that $\dim (V \cap Y\langle m \rangle)<d$ and $\dim V \cap \myReg_r^0(Y) \leq \dim (X \cap \myReg_r(Y)) < d$.
	There exists $1 \leq i \leq m$ such that $\dim (V \cap \myReg_r^i(Y))=d$ by Proposition \ref{prop:dim}(2) and the definition of $r$-partition sequences.
	Set $i_0:= \min\{1 \leq i \leq m\;|\; \dim (V \cap \myReg_r^i(Y))=d\}$.
	The set $$W:= V \cap \myReg_r^{i_0}(Y) \setminus \left(\bigcup_{j=0}^{i_0-1} \mycl (V \cap \myReg_r^j(Y))\right)$$
	is of dimension $d$ by Lemma \ref{lem:dim4} and Proposition \ref{prop:dim}(2).
	We prove that the set $W$ is open in $V$.
	The set $W$ is contained in $Y \langle i_0 \rangle = Y \setminus \left(\bigcup_{j=0}^{i_0-1} \myReg_r^j(Y)\right)$.
	The set $\myReg_r^{i_0}(Y)$ is open in $Y \langle i_0 \rangle$ by Corollary \ref{cor:regular1}.
	The intersection $V \cap \myReg_r^{i_0}(Y)$ is open in $V \cap Y\langle i_0 \rangle$.
	On the other hand, the difference $V':=V \setminus \left(\bigcup_{j=0}^{i_0-1} \mycl (V \cap \myReg_r^j(Y))\right)$ is open in $V$ and it is contained in $V \cap Y\langle i_0 \rangle$.
	The set $W$ is open in $V$ because it is the intersection of $V \cap \myReg_r^{i_0}(Y)$ with $V'$.
	
	There exists $x \in W$ such that $\dim W \cap B=d$ for any open box $B$ containing the point $x$ by  Corollary \ref{cor:dim2}.
	Since $W$ is an open subset of $V=\myint_YX$, we have $Y \cap B= X \cap B \subseteq W$ if we take a sufficiently small open box $B$ containing the point $x$.
	By the construction of $W$ and $B$, the intersection of $B$ with $\myReg_r^j(Y)$ is empty for each $0 \leq j < i_0$.
	It implies that $Y \cap B= Y \langle i_0 \rangle \cap B$.
	Since $\myReg_r(Y\langle i_0 \rangle) = \myReg_r^{i_0}(Y)$, the point $x$ is $r$-regular in $Y\langle i_0 \rangle$.
	The equality $Y \cap B= Y \langle i_0 \rangle \cap B$ implies that the point $x$ is $r$-regular in $Y$ by Lemma \ref{lem:open_regular}.
	This contradicts the definition of $\myReg_r^{i_0}(Y)$.
\end{proof}

\begin{corollary}\label{cor:lean_equiv}
	Consider a d-minimal expansion of an ordered field.
	Let $X \subseteq Y$ be definable sets.
	Assume that $\mypdeg(Y)=0$.
	The definable set $X$ is lean in $Y$ if and only if $\dim X<\dim Y$.
\end{corollary}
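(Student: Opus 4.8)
The plan is to read off both implications from Proposition \ref{prop:lean_equiv} and Proposition \ref{prop:regular_seq}; the hypothesis $\mypdeg(Y)=0$ is precisely what guarantees that, up to a set of dimension $<\dim Y$, the set $Y$ coincides with the single definable $\mathcal C^r$ submanifold $\myReg_r(Y)$, which collapses leanness to a genuine drop in dimension.

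The forward implication is immediate. If $\dim X<\dim Y$, then the ``in addition'' clause of Proposition \ref{prop:lean_equiv} asserts that conditions (1)--(3) hold; in particular condition (1) says that $X$ is lean in $Y$. No use of the hypothesis $\mypdeg(Y)=0$ is needed here.

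For the converse I would fix an arbitrary positive integer $r$ and assume that $X$ is lean in $Y$. First I would invoke the equivalence (1)$\Leftrightarrow$(3) of Proposition \ref{prop:lean_equiv} to rephrase leanness as $\dim(X\cap\myReg_r(Y))<\dim Y$. Next, since $\mypdeg(Y)=0$, Proposition \ref{prop:regular_seq} applied to $Y$ produces $m=0$ together with the inequality $\dim Y\langle 0\rangle<\dim Y$; because $Y\langle 0\rangle=Y\setminus\myReg_r(Y)$ by the definition of $Y\langle\,\cdot\,\rangle$, this yields $\dim(Y\setminus\myReg_r(Y))<\dim Y$.

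Finally I would write $X=(X\cap\myReg_r(Y))\cup(X\setminus\myReg_r(Y))$. The first piece has dimension $<\dim Y$ by the rephrased leanness, and the second piece is contained in $Y\setminus\myReg_r(Y)$, hence also has dimension $<\dim Y$ by monotonicity of $\dim$ under inclusion (a consequence of Proposition \ref{prop:dim}(2)). Applying Proposition \ref{prop:dim}(2) to this union then gives $\dim X<\dim Y$, as required. There is no genuine obstacle here: the only point deserving attention is the translation of leanness into the regular-set condition and the observation that the hypothesis $\mypdeg(Y)=0$ forces the non-regular part $Y\setminus\myReg_r(Y)$ to be automatically negligible, so that leanness can only arise from a true dimension deficit rather than from the interaction of several $\mathcal C^r$ strata.
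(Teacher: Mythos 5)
Your proof is correct and follows essentially the same route as the paper: both arguments decompose $X$ as $(X\cap\myReg_r(Y))\cup(X\setminus\myReg_r(Y))$, use Proposition \ref{prop:regular_seq} with $\mypdeg(Y)=0$ to see that $\dim(Y\setminus\myReg_r(Y))<\dim Y$, and then apply the equivalence of leanness with $\dim(X\cap\myReg_r(Y))<\dim Y$ from Proposition \ref{prop:lean_equiv}. The only cosmetic difference is that you treat the two implications separately while the paper runs the equivalences in a single chain.
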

\begin{proof}
	Fix a positive integer $r$.
	We have $ \dim X= \max\{\dim (X \cap \myReg_r(Y)),\dim(X \setminus \myReg_r(Y))\} \leq \max\{\dim(X \cap \myReg_r(Y)),\dim(Y \setminus \myReg_r(Y))\}$ by Proposition \ref{prop:dim}(2).
	On the other hand, we have $\dim Y \setminus \myReg_r(Y)<\dim Y$ by Proposition \ref{prop:regular_seq}.
	This implies that $\dim X<\dim Y$ if and only if $\dim (X \cap \myReg_r(Y))<\dim Y$.
	This implies the corollary by Proposition \ref{prop:lean_equiv}.
\end{proof}

\begin{remark}
	A definable subset $X$ of $Y$ is called \textit{$d$-large} in \cite{Wencel} if $\dim (Y \setminus X) < \dim Y$.
	Corollary \ref{cor:lean_equiv} implies that the notion of hugeness coincides with the notion of $d$-largeness when the structure is a definably complete locally o-minimal expansion of an ordered field.
	In fact, we always have $\mypdeg(X)=0$ for each nonempty definable set $X$ in this setting.
	We can decompose $X$ into finitely many definable $\mathcal C^r$ submanifolds $C_1, \ldots, C_m$ so that the frontier of $C_i$ is the union of members in a subfamily of $\{C_1, \ldots, C_m\}$ for each $1 \leq i \leq m$.
	See \cite[Theorem 2.11]{FK}.
	See also \cite[Theorem 5.6]{Fornasiero0} and \cite[Proposition 2.11]{FKK}.
	Let $M_0$ be the union of all $C_i$ with $\dim C_i=\dim X$ and let $M_{-1}$ be the union of all $C_i$ with $\dim C_i<\dim X$.
	The sequence $M_{-1},M_0$ is an $r$-partition sequence, which implies that $\mypdeg(X)=0$.
\end{remark}

We prove basic properties of hugeness.

\begin{lemma}\label{lean:product}
Consider a d-minimal expansion of an ordered field.
Let $X_i$ and $Y_i$ be definable sets with $X_i \subseteq Y_i$ for $i=1,2$.
If $X_1$ and $X_2$ are huge in $Y_1$ and $Y_2$, respectively, the Cartesian product $X_1 \times X_2$ is huge in $Y_1 \times Y_2$.
\end{lemma}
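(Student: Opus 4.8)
The plan is to avoid handling interiors directly and instead to pass through the regular locus, exploiting the characterization of leanness in Proposition \ref{prop:lean_equiv}. Fix a positive integer $r$ and write $R_i = \myReg_r(Y_i)$ and $d_i = \dim Y_i$; recall $\dim R_i = d_i$ by Corollary \ref{cor:regular1}. By definition, $X_1 \times X_2$ being huge in $Y_1 \times Y_2$ means that $(Y_1 \times Y_2) \setminus (X_1 \times X_2)$ is lean in $Y_1 \times Y_2$, so by the equivalence $(1)\Leftrightarrow(3)$ of Proposition \ref{prop:lean_equiv} applied to this complement it suffices to prove
\begin{equation*}
\dim\left( \left((Y_1 \times Y_2) \setminus (X_1 \times X_2)\right) \cap \myReg_r(Y_1 \times Y_2) \right) < \dim(Y_1 \times Y_2).
\end{equation*}
The same equivalence applied to the hypotheses translates ``$X_i$ is huge in $Y_i$'' into $\dim((Y_i \setminus X_i) \cap R_i) < d_i$ for $i = 1, 2$.

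Next I would rewrite the set appearing on the left. Using $\myReg_r(Y_1 \times Y_2) = R_1 \times R_2$ from Corollary \ref{cor:product_reg} together with the set identity $(Y_1 \times Y_2) \setminus (X_1 \times X_2) = ((Y_1 \setminus X_1) \times Y_2) \cup (Y_1 \times (Y_2 \setminus X_2))$, intersection with $R_1 \times R_2$ produces
\begin{equation*}
\left(\left((Y_1 \setminus X_1) \cap R_1\right) \times R_2\right) \cup \left(R_1 \times \left((Y_2 \setminus X_2) \cap R_2\right)\right).
\end{equation*}
Writing $A = (Y_1 \setminus X_1) \cap R_1$ and $B = (Y_2 \setminus X_2) \cap R_2$, the translated hypotheses read $\dim A < d_1$ and $\dim B < d_2$.

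Finally I would bound the dimension of this union. By additivity of dimension on Cartesian products -- which follows from the equidimensional fiber formula Corollary \ref{cor:dim1}(2) applied to the projection of a product onto its first factor -- one gets $\dim(A \times R_2) = \dim A + d_2 < d_1 + d_2$ and $\dim(R_1 \times B) = d_1 + \dim B < d_1 + d_2$, with empty factors handled trivially by the $-\infty$ convention. Since $\dim(Y_1 \times Y_2) = d_1 + d_2$ by the same formula, Proposition \ref{prop:dim}(2) shows the dimension of the union is the maximum of the two summands, hence strictly below $d_1 + d_2$. This is exactly the desired inequality, so $X_1 \times X_2$ is huge in $Y_1 \times Y_2$. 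The only point needing care is the bookkeeping of the product-of-dimensions identity and its legitimate invocation through Corollary \ref{cor:dim1}(2); beyond that there is no genuine obstacle, since passing to the regular locus linearizes the otherwise awkward interaction between interiors and Cartesian products.
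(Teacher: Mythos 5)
Your proof is correct and follows essentially the same route as the paper's: both reduce hugeness to the dimension criterion of Proposition \ref{prop:lean_equiv}, use Corollary \ref{cor:product_reg} to identify $\myReg_r(Y_1 \times Y_2)$ with $\myReg_r(Y_1) \times \myReg_r(Y_2)$, decompose the complement of $X_1 \times X_2$ into the same two product pieces, and bound the dimension via Proposition \ref{prop:dim}(2) and Corollary \ref{cor:dim1}(2). Your write-up is, if anything, slightly more careful about the empty-set convention and about stating the target inequality relative to $\dim(Y_1\times Y_2)$ rather than $\dim(X_1\times X_2)$.
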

\begin{proof}
	Let $r$ be a positive integer.
	We have $\myReg_r(Y_1 \times Y_2) \setminus (X_1 \times X_2) = (\myReg_r(Y_1) \times (\myReg_r(Y_2) \setminus X_2)) \cup ((\myReg_r(Y_1) \setminus X_1) \times \myReg_r(X_2))$ by Corollary \ref{cor:product_reg}.
	We have $\dim \myReg_r(Y_1 \times Y_2) \setminus (X_1 \times X_2) <\dim X_1 + \dim X_2 = \dim (X_1 \times X_2)$ by the assumption, Proposition \ref{prop:lean_equiv}, Proposition \ref{prop:dim}(2) and Corollary \ref{cor:dim1}(2).
	This inequality means that $X_1 \times X_2$ is huge in $Y_1 \times Y_2$ by Proposition \ref{prop:lean_equiv}.
\end{proof}

\begin{lemma}\label{lem:lean_homeo}
	Consider a d-minimal expansion of an ordered field.
	Let $\varphi:X \to Y$ be a definable homeomorphism.
	A definable subset $S$ of $X$ is lean in $X$ if and only if $\varphi(S)$ is lean in $Y$.
	Equivalently, $S$ is huge in $X$ if and only if $\varphi(S)$ is huge in $Y$.
\end{lemma}
\begin{proof}
	We obviously have $\varphi(\myint_XS)=\myint_Y\varphi(S)$ because $\varphi$ is a homeomorphism.
	We get $\dim (\myint_XS) < \dim X$ if and only if $\dim (\myint_Y\varphi(S))<\dim Y$ by Corollary \ref{cor:dim1}(1).
\end{proof}

\begin{lemma}\label{lem:lean_interior}
Consider a d-minimal expansion of an ordered field.
Let $X$ and $Y$ be definable sets with $X \subseteq Y$.
The definable set $X$ is huge in $Y$ if and only if $\myint_{Y}(X)$ is huge in $Y$. 
\end{lemma}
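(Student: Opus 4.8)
The plan is to pass to the regular part of $Y$ and exploit the characterization of leanness in Proposition \ref{prop:lean_equiv}. First I would fix a positive integer $r$, set $d=\dim Y$ and $R=\myReg_r(Y)$. By Corollary \ref{cor:regular1} the set $R$ is open in $Y$ and $\dim R=d$, and by Corollary \ref{cor:regular2} it is a definable $\mathcal C^r$ submanifold of dimension $d$. Recall that $X$ is huge in $Y$ exactly when $Y\setminus X$ is lean in $Y$, which by Proposition \ref{prop:lean_equiv} is equivalent to $\dim((Y\setminus X)\cap R)<d$; likewise $\myint_Y(X)$ is huge in $Y$ if and only if $\dim((Y\setminus\myint_Y(X))\cap R)<d$. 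Thus the whole statement reduces to comparing these two dimensions.

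Next I would decompose $Y\setminus\myint_Y(X)=(Y\setminus X)\cup(X\setminus\myint_Y(X))$ and intersect with $R$, obtaining the equality $(Y\setminus\myint_Y(X))\cap R=((Y\setminus X)\cap R)\cup((X\setminus\myint_Y(X))\cap R)$. By Proposition \ref{prop:dim}(2) the dimension of the left-hand side equals the maximum of the dimensions of the two pieces on the right. Consequently, once I establish that the second piece $(X\setminus\myint_Y(X))\cap R$ has dimension $<d$, the two conditions $\dim((Y\setminus X)\cap R)<d$ and $\dim((Y\setminus\myint_Y(X))\cap R)<d$ become equivalent, which is precisely the asserted equivalence between $X$ being huge and $\myint_Y(X)$ being huge.

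The heart of the argument, and the step I expect to be the main obstacle, is proving $\dim((X\setminus\myint_Y(X))\cap R)<d$. Since $R$ is open in $Y$, a point of $R$ is interior to $X$ in $Y$ if and only if it is interior to $X\cap R$ in $R$; that is, $\myint_Y(X)\cap R=\myint_R(X\cap R)$. Writing $A=X\cap R$, this identifies the set in question with $A\setminus\myint_R(A)$. Now $\myint_R(A\setminus\myint_R(A))\subseteq A\setminus\myint_R(A)$, and by monotonicity of the interior also $\myint_R(A\setminus\myint_R(A))\subseteq\myint_R(A)$; since $A\setminus\myint_R(A)$ is disjoint from $\myint_R(A)$, the set $A\setminus\myint_R(A)$ has empty interior in the manifold $R$. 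By Corollary \ref{cor:mfd_same_dim}, a definable subset of the $\mathcal C^r$ submanifold $R$ of dimension $d$ has nonempty interior in $R$ exactly when its dimension is $d$; the contrapositive then yields $\dim(A\setminus\myint_R(A))<d$, as required. This completes both directions at once, although the direction ``$\myint_Y(X)$ huge $\Rightarrow$ $X$ huge'' is in any case immediate from the inclusion $Y\setminus X\subseteq Y\setminus\myint_Y(X)$.
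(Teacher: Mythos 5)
Your proof is correct and follows essentially the same strategy as the paper's: reduce everything to $\myReg_r(Y)$ via Proposition \ref{prop:lean_equiv} and use the manifold structure there (Corollary \ref{cor:mfd_same_dim}) to see that the relevant boundary set has dimension $<\dim Y$. The only real difference is cosmetic: the paper bounds $\myReg_r(Y)\cap\partial_Y\bigl(\myReg_r(Y)\setminus X\bigr)$ using Corollary \ref{cor:mfd_bdry}, whereas you bound $\bigl(X\cap\myReg_r(Y)\bigr)\setminus\myint_{\myReg_r(Y)}\bigl(X\cap\myReg_r(Y)\bigr)$ directly via the elementary fact that $A\setminus\myint(A)$ has empty interior, which has the small advantage of being an unconditional estimate that delivers both directions of the equivalence at once.
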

\begin{proof}
	The `if' part is obvious from the definition of hugeness.
	We prove the `only if' part.
	Let $r$ be a positive integer.
	Set $Z= \myReg_r(Y) \setminus X $.
	We have $\dim Z<\dim Y$ by Proposition \ref{prop:lean_equiv}.
	We have $\dim (\myReg_r(Y) \cap\partial_Y(Z))<\dim Y$.
	In fact, this follows from Corollary \ref{cor:mfd_same_dim} and Lemma \ref{lem:dim4} when $\myint_{ \myReg_r(Y)}(Z)=\emptyset$.
	In the other case, we get this inequality by Corollary \ref{cor:regular1} , Corollary \ref{cor:regular2} and Corollary \ref{cor:mfd_bdry}.
	We have $\myReg_r(Y) \setminus \myint_Y(X) \subseteq  Z \cup (\myReg_r(Y) \cap\partial_Y (Z))$.
	We prove this inclusion.
	Let $x \in \myReg_r(Y) \setminus \myint_Y(X)$.
	It is obvious $x \in Z$ when $x \notin X$.
	When $x \in X$, we obviously have $x \notin Z$.
	If the condition $x \notin \mycl_Y(Z)$ holds, then we have $x \in \myint_Y(X)=\myint_{ \myReg_r(Y)}(X)$ because $\myReg_r(Y)$ is open in $Y$ by Corollary \ref{cor:regular1}.
	This is a contradiction. 
	We have demonstrated that $x \in \partial _Y(Z)$.
	We get the inequality $\dim \myReg_r(Y) \setminus \myint_Y(X) <\dim Y$ by Proposition \ref{prop:dim}(2), which implies that $\myint_{Y}(X)$ is huge in $Y$ by Proposition \ref{prop:lean_equiv}. 
\end{proof}

\begin{lemma}\label{lem:large_equiv}
Let $r$ be a positive integer.
Consider a d-minimal expansion of an ordered field.
Let $X$ and $Y$ be definable subsets of $F^n$ with $X \subseteq Y$ and $\dim Y=d$.
The definable set $X$ is $r$-large in $Y$ if and only if there exist an open box $U$ in $F^n$, a coordinate projection $\pi:F^n \to F^d$ and a definable $\mathcal C^r$ map $\varphi: \pi(U) \to F^{n-d}$ such that $U \cap X = U \cap Y$ and $U \cap Y$ is a permutated graph of $\varphi$ under $\pi$. 
\end{lemma}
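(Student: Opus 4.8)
The plan is to prove both implications by exploiting Lemma~\ref{lem:regular}, which characterizes $r$-regularity of a point exactly in terms of the ambient set being, locally on an open box, a permuted graph of a definable $\mathcal C^r$ map. Recall that, by definition, $X$ is $r$-large in $Y$ precisely when $\myint_Y(X \cap \myReg_r(Y)) \neq \emptyset$, so the whole statement is a dictionary between ``$X$ fills up a chunk of the regular locus of $Y$'' and ``$Y$ looks like a single $\mathcal C^r$ graph on a box, with $X$ coinciding with $Y$ there.''

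For the ``if'' direction I would start from the data $U$, $\pi$, $\varphi$ in the statement. Since $U \cap Y$ is the permuted graph of the definable $\mathcal C^r$ map $\varphi$ defined on $\pi(U)$ under $\pi$, the ``if'' part of Lemma~\ref{lem:regular} (applied to $Y$) shows that every point of $U \cap Y$ is $r$-regular in $Y$; hence $U \cap Y \subseteq \myReg_r(Y)$. The hypothesis $U \cap X = U \cap Y$ gives $U \cap Y \subseteq X$, so $U \cap Y \subseteq X \cap \myReg_r(Y)$. As $U$ is open in $F^n$, the set $U \cap Y$ is open in $Y$, and it is nonempty because it is the graph of $\varphi$ over the nonempty set $\pi(U)$. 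Therefore $\emptyset \neq U \cap Y \subseteq \myint_Y(X \cap \myReg_r(Y))$, which is exactly $r$-largeness of $X$ in $Y$.

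For the ``only if'' direction I would pick a point $x \in \myint_Y(X \cap \myReg_r(Y))$. Because $x \in \myReg_r(Y)$, the ``only if'' part of Lemma~\ref{lem:regular} produces a coordinate projection $\pi$ and an open box $U_0 \ni x$ with $Y \cap U_0$ equal to the permuted graph of a definable $\mathcal C^r$ map $\varphi_0$ on $\pi(U_0)$; after permuting coordinates I may assume $\pi$ is the projection onto the first $d$ coordinates. Because $x$ lies in the $Y$-interior of $X \cap \myReg_r(Y)$, there is an open set $O \ni x$ in $F^n$ with $O \cap Y \subseteq X$, hence $O \cap X = O \cap Y$. It then remains to produce a single open box $U \subseteq U_0 \cap O$ containing $x$ on which $Y$ is still the \emph{full} graph of (the restriction of) $\varphi_0$ over $\pi(U)$; then $U \cap Y$ is the desired permuted graph of $\varphi := \varphi_0|_{\pi(U)}$, and $U \subseteq O$ forces $U \cap X = U \cap Y$, completing the proof.

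The main obstacle is exactly this last shrinking step: an arbitrary box inside $U_0 \cap O$ need not meet $Y$ in a graph over all of its base, since the $\mathcal C^r$ section may leave the vertical part of the box. The way I would handle it mirrors the end of the proof of Lemma~\ref{lem:regular}: writing $x = (a, \varphi_0(a))$ with $a = \pi(x)$, first choose a vertical open box $B_2 \ni \varphi_0(a)$ and a horizontal open box $B_1 \ni a$ so that $B_1 \times B_2 \subseteq U_0 \cap O$, and then use continuity of $\varphi_0$ to shrink $B_1$ until $\varphi_0(B_1) \subseteq B_2$. With $U = B_1 \times B_2$ one checks directly that $U \cap Y = \{(t, \varphi_0(t)) : t \in B_1\}$ is the graph over $\pi(U) = B_1$, so that restricting $\varphi_0$ to $B_1$ gives the required map $\varphi$.
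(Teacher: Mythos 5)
Your proposal is correct and follows essentially the same route as the paper: both directions are reduced to Lemma \ref{lem:regular} together with the openness of $\myReg_r(Y)$ in $Y$ (Corollary \ref{cor:regular1}). The only difference is that you spell out the final box-shrinking step (choosing $B_1\times B_2$ with $\varphi_0(B_1)\subseteq B_2$), which the paper compresses into ``by shrinking $U$ if necessary''; this is a welcome clarification but not a different argument.
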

\begin{proof}
	We first prove the `if' part.
	Since $U \cap Y$ is a permutated graph of a definable $\mathcal C^r$ map, it is contained in $\myReg_r(Y)$ by Lemma \ref{lem:regular}.
	The equality $U \cap X=U \cap Y$ implies that $\myint_{\myReg_r(Y)}(X \cap \myReg_r(Y)) \neq \emptyset$.
	Since $\myReg_r(Y)$ is open in $Y$ by Corollary \ref{cor:regular1}, we have $\myint_{Y}(X \cap \myReg_r(Y)) \neq \emptyset$.
	
	We next prove the `only if' part.
	Note that $\myint_{Y}(X \cap \myReg_r(Y)) = \myint_{\myReg_r(Y)}(X \cap \myReg_r(Y)) $ because $\myReg_r(Y)$ is open in $Y$.
	Take a point $x \in \myint_{\myReg_r(Y)}(X \cap \myReg_r(Y))$.
	We have $U \cap X=U \cap \myReg_r(Y)=U \cap Y$ for any sufficiently small open box $U$ in $F^n$ containing the point $x$ because $\myReg_r(Y)$ is open in $Y$ by Corollary \ref{cor:regular1} and $x$ is a point in the interior of $X$ in $\myReg_r(Y)$.
	By Lemma \ref{lem:regular}, there exist a coordinate projection $\pi:F^n \to F^d$ and a definable $\mathcal C^r$ map $\varphi: \pi(U) \to F^{n-d}$ such that $U \cap Y$ is a permutated graph of $\varphi$ under $\pi$ by shrinking $U$ if necessary because $x$ is $r$-regular in $Y$. 
\end{proof}

\begin{lemma}\label{lem:sum_large}
	Consider a d-minimal expansion of an ordered field.
	Let $Y$ be a definable set and $X_i$ be a definable subset of $Y$ for $1 \leq i \leq m$.
	If the union $\bigcup_{i=1}^m X_i$ is not lean in $Y$, the definable set $X_i$ is not lean in $Y$ for some $1 \leq i \leq m$.
	In addition, the union of finitely many definable subsets which are lean in $Y$ is lean in $Y$, and the intersection of finitely many definable subsets which are huge in $Y$ is huge in $Y$.
\end{lemma}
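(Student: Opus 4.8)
The plan is to reduce every claim to the dimension criterion for leanness supplied by Proposition \ref{prop:lean_equiv}, after which Proposition \ref{prop:dim}(2) disposes of the finite unions. First I would fix a positive integer $r$, set $d=\dim Y$ and write $R=\myReg_r(Y)$; by Proposition \ref{prop:lean_equiv} a definable $Z \subseteq Y$ is lean in $Y$ precisely when $\dim(Z \cap R)<d$. This converts the entire lemma into statements about the standard dimension function, which is the only place any content resides.

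For the main assertion I would argue through dimension. Assuming $\bigcup_{i=1}^m X_i$ is not lean in $Y$, the criterion forces $\dim\left(\left(\bigcup_{i=1}^m X_i\right)\cap R\right)=d$: the set lies in $Y$, so its dimension is at most $d$, and by leanness it cannot be strictly smaller. Using the elementary identity $\left(\bigcup_{i=1}^m X_i\right)\cap R=\bigcup_{i=1}^m (X_i \cap R)$ together with the maximum formula of Proposition \ref{prop:dim}(2), I would conclude that $\dim(X_i \cap R)=d$ for some $i$, so that this $X_i$ is not lean in $Y$ by Proposition \ref{prop:lean_equiv}.

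The two remaining statements then follow formally, with no new ideas. If all $X_i$ are lean in $Y$, the contrapositive of the first assertion shows that their union is lean in $Y$. For the intersection of huge sets I would invoke De Morgan: by hypothesis each $Y \setminus X_i$ is lean in $Y$, the set $Y \setminus \bigcap_{i=1}^m X_i=\bigcup_{i=1}^m (Y \setminus X_i)$ is a finite union of lean sets and hence lean in $Y$ by the previous assertion, and therefore $\bigcap_{i=1}^m X_i$ is huge in $Y$ by definition of hugeness.

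I do not expect a genuine obstacle here, since Proposition \ref{prop:lean_equiv} already carries all the analytic weight. The only point needing a moment of care is that passing to $R=\myReg_r(Y)$ commutes with finite unions and that the dimension of a finite union is the maximum of the dimensions of its members, which is exactly Proposition \ref{prop:dim}(2); everything else is bookkeeping with the definitions of leanness and hugeness.
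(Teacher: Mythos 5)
Your proof is correct, and it takes a slightly different route from the paper's. The paper argues directly with interiors: it writes $\dim\bigl(\bigcup_{i=1}^m \myint_Y(X_i)\bigr) = \dim\bigl(\myint_Y\bigl(\bigcup_{i=1}^m X_i\bigr)\bigr) = \dim Y$ and then applies Proposition \ref{prop:dim}(2) to the union of interiors. The delicate point there is that $\myint_Y$ does not commute with finite unions --- only the inclusion $\bigcup_i \myint_Y(X_i) \subseteq \myint_Y\bigl(\bigcup_i X_i\bigr)$ is automatic, so the nontrivial direction of that displayed dimension equality is essentially the content of the lemma itself. You sidestep this entirely by routing through condition (3) of Proposition \ref{prop:lean_equiv}: intersection with $\myReg_r(Y)$ genuinely commutes with finite unions, so Proposition \ref{prop:dim}(2) applies with no further justification. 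This makes your argument, if anything, tighter than the paper's one-line version; the trade-off is that it leans on the full strength of the equivalence $(1)\Leftrightarrow(3)$ in Proposition \ref{prop:lean_equiv}, which is itself proved by a nontrivial induction on the partition sequence, whereas the paper's intended argument stays at the level of the raw definition of leanness. Your handling of the ``in addition'' part (contrapositive plus De Morgan) matches what the paper dismisses as obvious and is complete.
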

\begin{proof}
	Since $\bigcup_{i=1}^m X_i$ is not lean in $Y$, we have 
	\begin{align*}
	\dim \left(\bigcup_{i=1}^m \myint_Y(X_i)\right) = \dim \left(\myint_Y\left(\bigcup_{i=1}^m X_i \right)\right) =\dim Y.
	\end{align*}
	We have $\dim \myint_Y(X_i)=\dim Y$ for some $1 \leq i \leq m$ by Proposition \ref{prop:dim}(2).
	It means that $X_i$ is not lean.
	The `in addition' part is obvious.
\end{proof}

\begin{lemma}\label{lem:huge_map}
	Let $r$ be a positive integer.
	Consider a d-minimal expansion of an ordered field.
	Let $X$ be a nonempty definable set and $f:X \to F^n$ be a definable map, where $F$ is the universe of the structure.
	Then, there exists a definable subset $U$ of $X$ such that $U$ is open and huge in $X$, $U$ is a definable $\mathcal C^r$ submanifold and the restriction of $f$ to $U$ is of class $\mathcal C^r$.
\end{lemma}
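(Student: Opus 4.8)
The plan is to start from the regular locus $\myReg_r(X)$, which by Corollary \ref{cor:regular1} and Corollary \ref{cor:regular2} is a definable $\mathcal C^r$ submanifold of dimension $d:=\dim X$ that is open in $X$. This set already meets the submanifold and openness requirements, so the two remaining tasks are to shrink it so that $f$ becomes of class $\mathcal C^r$ and to check that what remains is still huge in $X$. Here I interpret $f|_U$ being of class $\mathcal C^r$ as each of the $n$ scalar components of $f$ being of class $\mathcal C^r$ in the sense of the submanifold definition, since that is all Lemma \ref{lem:mfd_cr} addresses directly.

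For the smoothing, I would write $f=(f_1,\ldots,f_n)$ and apply Lemma \ref{lem:mfd_cr} repeatedly. Setting $M_0:=\myReg_r(X)$, Lemma \ref{lem:mfd_cr} applied to $f_1|_{M_0}$ yields a definable open subset $M_1$ of $M_0$ with $\dim(M_0\setminus M_1)<d$ on which $f_1$ is of class $\mathcal C^r$; by Lemma \ref{lem:open_mfd} the set $M_1$ is again a definable $\mathcal C^r$ submanifold of dimension $d$, so I can feed $f_2|_{M_1}$ into Lemma \ref{lem:mfd_cr}, and continue. After $n$ steps I obtain a definable open subset $U:=M_n$ of $\myReg_r(X)$ on which every $f_i$, hence $f$ itself, is of class $\mathcal C^r$. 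Since $\myReg_r(X)\setminus U\subseteq\bigcup_{i=1}^n(M_{i-1}\setminus M_i)$, Proposition \ref{prop:dim}(2) gives $\dim(\myReg_r(X)\setminus U)<d$. By Lemma \ref{lem:open_mfd}, $U$ is a definable $\mathcal C^r$ submanifold, and because $\myReg_r(X)$ is open in $X$, so is $U$.

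It remains to verify hugeness, which is where the notions of Subsection \ref{subsec:huge_lean} do the work. Since $U\subseteq\myReg_r(X)$, the intersection $(X\setminus U)\cap\myReg_r(X)$ equals $\myReg_r(X)\setminus U$, whose dimension is $<d=\dim X$ by the previous step. The implication $(3)\Rightarrow(1)$ of Proposition \ref{prop:lean_equiv} then shows that $X\setminus U$ is lean in $X$, that is, $U$ is huge in $X$, which finishes the argument. No genuinely hard step is expected: the construction is essentially bookkeeping of the regular-locus and $\mathcal C^r$-smoothing lemmas, and the only points demanding care are the coordinatewise application of Lemma \ref{lem:mfd_cr} to the $F^n$-valued map and the recognition, via Proposition \ref{prop:lean_equiv}, that hugeness of an open subset $U$ of $X$ reduces to the single dimension bound $\dim(\myReg_r(X)\setminus U)<\dim X$.
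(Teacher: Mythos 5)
Your proposal is correct and follows essentially the same route as the paper: shrink $\myReg_r(X)$ via Lemma \ref{lem:mfd_cr} to a definable open subset on which $f$ is of class $\mathcal C^r$, then deduce hugeness from Proposition \ref{prop:lean_equiv}. Your coordinatewise application of Lemma \ref{lem:mfd_cr} is a careful (and welcome) handling of the fact that that lemma is stated for scalar-valued functions, a detail the paper's one-line invocation leaves implicit.
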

\begin{proof}
	Note that $\myReg_r(X)$ is open in $X$ and it is a definable $\mathcal C^r$ submanifold by Corollary \ref{cor:regular1} and Corollary \ref{cor:regular2}.
	There exists a definable open subset $U$ of $\myReg_r(X)$ such that $\dim \myReg_r(X) \setminus U<\dim X$ and the restriction of $f$ to $U$ is of class $\mathcal C^r$ by Lemma \ref{lem:mfd_cr}.
	The definable set $U$ is a definable $\mathcal C^r$ submanifold by Lemma \ref{lem:open_mfd}.
	The definable set $U$ is obviously open in $X$ and huge in $X$ by Proposition \ref{prop:lean_equiv}.
\end{proof}

\begin{lemma}\label{lem:lean_imply_ineq}
Consider a d-minimal expansion of an ordered field.
Let $X$ and $Y$ be definable sets with $X \subseteq Y$.
If $\dim X=\dim Y$ and $X$ is lean in $Y$, then the inequality $\mypdeg X < \mypdeg Y$ holds.
\end{lemma}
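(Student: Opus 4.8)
The plan is to reduce the comparison of $\mypdeg X$ and $\mypdeg Y$ to a comparison between $X$ and the first residue $Y\langle 0\rangle=Y\setminus\myReg_r(Y)$ of the regularity process in Proposition \ref{prop:regular_seq}, exploiting that leanness forces $X$ to miss the top regular stratum of $Y$ up to a set of dimension smaller than $\dim Y$. Throughout I would fix a positive integer $r$ (legitimate since $\mypdeg$ is independent of $r$) and set $d=\dim X=\dim Y$.

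First I would observe that the hypotheses already force $\mypdeg Y\geq 1$. Indeed, if $\mypdeg Y=0$, then Corollary \ref{cor:lean_equiv} would say that $X$ is lean in $Y$ if and only if $\dim X<\dim Y$, contradicting the assumption that $\dim X=\dim Y$ while $X$ is lean in $Y$. Consequently $\dim Y\langle 0\rangle=d$ and $\mypdeg(Y\langle 0\rangle)=\mypdeg Y-1$ by Proposition \ref{prop:regular_seq}. Next, since $X$ is lean in $Y$, Proposition \ref{prop:lean_equiv} supplies the crucial dimension bound $\dim(X\cap\myReg_r(Y))<d$.

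The heart of the argument is then a swap. I would set $X'=X\setminus\myReg_r(Y)$. Because $X\setminus X'=X\cap\myReg_r(Y)$ has dimension $<d$ and $X'\setminus X=\emptyset$, Proposition \ref{prop:dim}(2) gives $\dim X'=d$, and Lemma \ref{lem:small_gap} yields $\mypdeg X=\mypdeg X'$. By construction $X'\subseteq Y\setminus\myReg_r(Y)=Y\langle 0\rangle$, and both sets have dimension $d$, so Lemma \ref{lem:partition_included} gives $\mypdeg X'\leq\mypdeg(Y\langle 0\rangle)$. Chaining these, $\mypdeg X=\mypdeg X'\leq\mypdeg(Y\langle 0\rangle)=\mypdeg Y-1<\mypdeg Y$, which is the desired strict inequality.

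I expect the only delicate points to be bookkeeping rather than genuine obstacles: establishing $\mypdeg Y\geq 1$ (so that $Y\langle 0\rangle$ retains dimension $d$ and the identity $\mypdeg(Y\langle 0\rangle)=\mypdeg Y-1$ is available), and verifying the hypotheses of Lemma \ref{lem:small_gap} for the replacement of $X$ by $X'$. The conceptual content is entirely carried by Proposition \ref{prop:lean_equiv}, which converts leanness into the bound $\dim(X\cap\myReg_r(Y))<d$ that lets $X$ be pushed, up to a lower-dimensional discrepancy, into $Y\langle 0\rangle$.
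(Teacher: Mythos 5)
Your proof is correct and follows essentially the same route as the paper's: both arguments rest on Proposition \ref{prop:lean_equiv} to get $\dim(X\cap\myReg_r(Y))<d$, Lemma \ref{lem:small_gap} to replace $X$ by $X\setminus\myReg_r(Y)$, Lemma \ref{lem:partition_included} to compare with $Y\setminus\myReg_r(Y)$, and Proposition \ref{prop:regular_seq} for $\mypdeg(Y\setminus\myReg_r(Y))=\mypdeg Y-1$. The only difference is presentational: you argue directly (with an explicit check that $\mypdeg Y\geq 1$), while the paper phrases the same computation as a contrapositive-plus-contradiction.
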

\begin{proof}
	Let $r$ be a positive integer.
	By considering the contraposition of the lemma together with Proposition \ref{prop:lean_equiv}, we have only to show that $\dim(X \cap \myReg_r(Y))=\dim Y$ if the equalities $\dim X=\dim Y$ and $\mypdeg(X)=\mypdeg(Y)$ holds.
	Set $d=\dim Y$ and $m=\mypdeg(Y)$.
	Assume for contradiction that $\dim(X \cap \myReg_r(Y))<d$.
	We have $\dim X \setminus (Y \setminus \myReg_r(Y))= \dim(X \cap \myReg_r(Y))<d$.
	Therefore, the equality $\mypdeg(Y)=\mypdeg(X) =\mypdeg (X \cap (Y \setminus \myReg_r(Y)))\leq \mypdeg(Y \setminus \myReg_r(Y))$ holds by  Lemma \ref{lem:small_gap} and Lemma \ref{lem:partition_included}.
	It is obvious that $\mypdeg(Y \setminus \myReg_r(Y))=m-1$ by Proposition \ref{prop:regular_seq}, which is impossible.
\end{proof}

\begin{lemma}\label{lem:lean_product}
	Consider a d-minimal expansion of an ordered field.
	Let $X$ and $Y$ be definable sets.
	Let $\pi_1:X \times Y \to X$ and $\pi_2:X \times Y \to Y$ be the projections.
	Let $Z$ be a definable subset of $X \times Y$ and set $W:=\{y \in Y\;|\; \pi_1(\pi_2^{-1}(y) \cap Z) \text{ is not lean in }X\}.$
	If $Z$ is lean in $X \times Y$, then $W$ is lean in $Y$.
\end{lemma}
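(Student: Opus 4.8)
The plan is to translate the whole statement into the dimension language of Proposition \ref{prop:lean_equiv} and then finish with a single fiber-dimension count. Fix a positive integer $r$ and write $d = \dim X$, $e = \dim Y$. First I would unwind the definition of $W$: since $\pi_2^{-1}(y) \cap Z = Z_y \times \{y\}$, where $Z_y := \{x \in X \mid (x,y) \in Z\}$, the set $\pi_1(\pi_2^{-1}(y) \cap Z)$ is exactly the fiber $Z_y \subseteq X$. Hence $W = \{y \in Y \mid Z_y \text{ is not lean in } X\}$. By the equivalence (1)$\Leftrightarrow$(3) of Proposition \ref{prop:lean_equiv} applied to $Z_y \subseteq X$, membership $y \in W$ is equivalent to $\dim(Z_y \cap \myReg_r(X)) = d$. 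The goal, again via Proposition \ref{prop:lean_equiv}, is to prove $\dim(W \cap \myReg_r(Y)) < e$.

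Next I would pass to the regular locus of the product. By Corollary \ref{cor:product_reg} we have $\myReg_r(X \times Y) = \myReg_r(X) \times \myReg_r(Y)$, so by Proposition \ref{prop:lean_equiv} the hypothesis that $Z$ is lean in $X \times Y$ is equivalent to $\dim(Z') < d + e$, where $Z' := Z \cap (\myReg_r(X) \times \myReg_r(Y))$. The key observation is that for $y \in \myReg_r(Y)$ the condition $(x,y) \in \myReg_r(X) \times \myReg_r(Y)$ reduces to $x \in \myReg_r(X)$, so the fiber $(Z')_y := \{x \mid (x,y) \in Z'\}$ equals $Z_y \cap \myReg_r(X)$. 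Combining this with the previous paragraph gives
\[
W \cap \myReg_r(Y) = \{y \in \myReg_r(Y) \mid \dim((Z')_y) = d\} =: W'.
\]
The set $W'$ is definable because $W$ is (leanness is a definable condition on the defining parameter of $Z_y$) and $\myReg_r(Y)$ is definable by Corollary \ref{cor:regular1}.

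Finally I would run the fiber-dimension argument. Set $T := Z' \cap \pi_2^{-1}(W')$. The restriction $\pi_2|_T : T \to W'$ is surjective, since each fiber $(Z')_y$ with $y \in W'$ has dimension $d \geq 0$ and is therefore nonempty, and all of its fibers have dimension exactly $d$ by construction of $W'$. Hence Corollary \ref{cor:dim1}(2) yields $\dim T = \dim W' + d$. Since $T \subseteq Z'$ we obtain $\dim W' + d \leq \dim Z' < d + e$, so $\dim W' < e$. As $W' = W \cap \myReg_r(Y)$, the implication (3)$\Rightarrow$(1) of Proposition \ref{prop:lean_equiv} shows that $W$ is lean in $Y$, which is what we want.

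The step I expect to be most delicate is the clean identification $W \cap \myReg_r(Y) = W'$: it rests on the equality $(Z')_y = Z_y \cap \myReg_r(X)$ for $y \in \myReg_r(Y)$, fed into the leanness–dimension dictionary of Proposition \ref{prop:lean_equiv} applied fiberwise. Once that identification is secured, the rest is a routine application of the equidimensional-fiber formula. One should also dispose of the trivial degenerate cases (e.g. $X$ or $Y$ empty, or $Z'$ empty, in which $\dim W' = -\infty < e$) separately, where the conclusion is immediate.
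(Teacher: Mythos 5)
Your proof is correct and is essentially the paper's argument run in the direct rather than the contrapositive direction: both rest on the same three ingredients, namely Proposition \ref{prop:lean_equiv} to convert leanness into dimension statements about intersections with regular loci, Corollary \ref{cor:product_reg} to identify $\myReg_r(X\times Y)$ with $\myReg_r(X)\times\myReg_r(Y)$, and Corollary \ref{cor:dim1}(2) applied to $\pi_2$ restricted to $Z\cap(\myReg_r(X)\times\myReg_r(Y))\cap\pi_2^{-1}(W)$. The fiber identification $(Z')_y=Z_y\cap\myReg_r(X)$ that you flag as the delicate point is indeed the crux, and it holds exactly as you state.
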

\begin{proof}
	We prove the contraposition.
	Fix a positive integer $r$.
	Assume that $W$ is not lean in $Y$.
	We have $\dim (W \cap \myReg_r(Y))=\dim Y$ by Proposition \ref{prop:lean_equiv}.
	The same proposition asserts the equality  $W=\{y \in Y\;|\; \dim (\pi_1(\pi_2^{-1}(y) \cap Z) \cap \myReg_r(X))=\dim X\}$.
	Apply Corollary \ref{cor:dim1}(2) to the definable set $\pi_2^{-1}(W) \cap (\myReg_r(X) \times \myReg_r(Y)) \cap Z$ and the restriction of $\pi_2$ to this set.
	We have $\dim (\pi_2^{-1}(W) \cap (\myReg_r(X) \times \myReg_r(Y)) \cap Z) = \dim X + \dim Y$.
	We get $\dim (\myReg_r(X \times Y) \cap Z)=\dim ((\myReg_r(X) \times \myReg_r(Y)) \cap Z) =\dim X+\dim Y$ by Corollary \ref{cor:product_reg}.
	It means that $W$ is not lean in $X \times Y$ by Proposition \ref{prop:lean_equiv}.
\end{proof}

We can generalize Lemma \ref{lem:lean_product} as follows:
We do not use the following proposition in this paper.
\begin{proposition}\label{prop:lean_product2}
	Consider a d-minimal expansion of an ordered field $\mathcal F=(F,<,+,\cdot,0,1,\ldots)$.
	Let $\pi:F^n \to F^m$ be a coordinate projection.
	Let $X$ and $Z$ be definable subsets of $F^n$ such that $Z \subseteq X$ and $\dim X=\dim Z$.
	Assume that $\dim (X \cap \pi^{-1}(x))$ is independent of $x \in \pi(X)$.
	Set $W:=\{x \in \pi(X)\;|\; \pi^{-1}(x) \cap Z \text{ is not lean in }\pi^{-1}(x) \cap X\}.$
	If $Z$ is lean in $X$, then $W$ is lean in $\pi(X)$.
\end{proposition}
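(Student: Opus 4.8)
The plan is to adapt the proof of Lemma \ref{lem:lean_product} to the fibered setting, with one extra ingredient comparing fiberwise regularity with regularity of the total space. I argue by contraposition: fixing a positive integer $r$ and assuming that $W$ is not lean in $\pi(X)$, I aim to show that $Z$ is not lean in $X$. Set $D=\dim X=\dim Z$ and $b=\dim \pi(X)$, and let $e$ be the common fiber dimension $\dim(X\cap\pi^{-1}(x))$; then $D=b+e$ by Corollary \ref{cor:dim1}(2). I introduce the fiberwise regular locus $R:=\{(x,y)\in X\;|\;y\in\myReg_r(X\cap\pi^{-1}(x))\}$, which is definable, and write $M:=\myReg_r(X)$. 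Applying Proposition \ref{prop:lean_equiv} inside each fiber $X\cap\pi^{-1}(x)$ (a subset of $F^{n-m}$) identifies $W$ with $\{x\in\pi(X)\;|\;\dim((Z\cap R)\cap\pi^{-1}(x))=e\}$, since $(Z\cap R)\cap\pi^{-1}(x)=(Z\cap\pi^{-1}(x))\cap\myReg_r(X\cap\pi^{-1}(x))$.

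First I would run the dimension count exactly as in Lemma \ref{lem:lean_product}. Since $W$ is not lean in $\pi(X)$, Proposition \ref{prop:lean_equiv} gives $\dim(W\cap\myReg_r(\pi(X)))=b$, so in particular $\dim W=b$; and over $W$ the set $Z\cap R$ has fibers of constant dimension $e$, so Corollary \ref{cor:dim1}(2) yields $\dim((Z\cap R)\cap\pi^{-1}(W))=b+e=D$, whence $\dim(Z\cap R)=D$. What remains is to transfer this from the fiberwise regular locus $R$ to the genuine regular locus $M$. The crucial claim is that $\dim(R\setminus M)<D$; granting it, $\dim((Z\cap R)\setminus M)<D$, so $\dim(Z\cap R\cap M)=D$ by Proposition \ref{prop:dim}(2), whence $\dim(Z\cap\myReg_r(X))=D$ and $Z$ is not lean in $X$ by Proposition \ref{prop:lean_equiv}, as desired.

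The main obstacle is the claim $\dim(R\setminus M)<D$, i.e. that a point regular in its own fiber fails to be regular in $X$ only on a set of dimension $<D$; this is exactly where the d-minimal generic-smoothness property must replace the missing continuity property of $\dim$. Here is how I would establish it. Using that there are only finitely many coordinate projections, I partition $R$ into definable pieces according to a fiber coordinate projection $\tau:F^{n-m}\to F^{e}$ witnessing fiberwise regularity as in Lemma \ref{lem:regular}; by Proposition \ref{prop:dim}(2) it suffices to bound each piece. Discarding a subset of dimension $<D$, I may also fix a base coordinate projection $\beta:F^{m}\to F^{b}$ with $\beta(\pi(X))$ of nonempty interior and work over $\myReg_r(\pi(X))$, where $\pi(X)$ is itself a permuted $\mathcal C^r$ graph over $\beta$ by Lemma \ref{lem:regular} and Corollary \ref{cor:regular2}. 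On such a piece the set $X$ is, near each of its points, the permuted graph over the $D=b+e$ coordinates selected by $(\beta\circ\pi,\tau)$ of a single definable map $\Psi$ defined on an open subset of $F^{D}$, obtained by composing the fiberwise graphing function with the $\mathcal C^r$ parametrization of $\pi(X)$ over $\beta$. Applying Lemma \ref{lem:dim3} to each component of $\Psi$, the locus in the open domain at which $\Psi$ fails to be of class $\mathcal C^r$ has empty interior, hence dimension $<D$; by Lemma \ref{lem:regular} every point of the piece lying over a point where $\Psi$ is $\mathcal C^r$ is $r$-regular in $X$, so the part of the piece inside $R\setminus M$ has dimension $<D$. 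Because the correspondence between the piece and the domain of $\Psi$ is a definable bijection, dimension is preserved by Corollary \ref{cor:dim1}(1), and summing over the finitely many pieces gives $\dim(R\setminus M)<D$. The delicate points are the simultaneous reduction of base and fiber to full-dimensional graph data and the verification that joint $\mathcal C^r$-smoothness of $\Psi$ is precisely what promotes fiberwise regularity to regularity in $X$.
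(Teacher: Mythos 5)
Your reduction of the proposition to the claim $\dim(R\setminus M)<D$, where $R$ is the fiberwise regular locus and $M=\myReg_r(X)$, is the gap: that claim is false. Work in a d-minimal expansion in which some definable set of dimension $0$ has a non-isolated point, e.g.\ $(\mathbb R,<,+,\cdot,2^{\mathbb Z})$ with $A=\{0\}\cup(2^{\mathbb Z}\cap(0,1])$; put $X=A\times(0,1)\subseteq F^2$ and let $\pi$ be the projection onto the first coordinate. All fibers equal $(0,1)$, so they are equidimensional of dimension $e=1$ and every point of $X$ is regular in its own fiber, i.e.\ $R=X$; but $\myReg_r(X)=\myReg_r(A)\times\myReg_r((0,1))=\myIso(A)\times(0,1)$ by Corollary \ref{cor:product_reg}, so $R\setminus M=\{0\}\times(0,1)$ has dimension $1=\dim X=D$. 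The precise place your sketch of the claim breaks is the step ``discarding a subset of dimension $<D$ \dots\ work over $\myReg_r(\pi(X))$'': the discarded set $X\cap\pi^{-1}\bigl(\pi(X)\setminus\myReg_r(\pi(X))\bigr)$ has dimension $\dim\bigl(\pi(X)\setminus\myReg_r(\pi(X))\bigr)+e$, which equals $D$ whenever $\mypdeg(\pi(X))>0$ --- exactly the situation in the example, where $\pi(X)=A$. Relatedly, the assertion that on each piece $X$ is, near each of its points, the graph of a single definable map $\Psi$ on an open subset of $F^{D}$ is not a consequence of fiberwise regularity: the fiberwise graphing functions live on domains that vary with the base point and need not glue into anything locally graphical (in the example, near a point of $\{0\}\times(0,1)$ the set $X$ is not a permuted graph over any coordinate projection). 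This conflation of ``each fiber is locally a graph'' with ``the union of the fibers is locally a graph'' is precisely the failure of the continuity property that this section of the paper is designed to circumvent.

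Your contrapositive dimension count up to $\dim\bigl(Z\cap R\cap\pi^{-1}(W\cap\myReg_r(\pi(X)))\bigr)=D$ is correct, and since the hypothesis that $W$ is not lean forces you onto the regular part of the base, the architecture could be salvaged by proving the restricted claim $\dim\bigl(R\cap\pi^{-1}(\myReg_r(\pi(X)))\setminus M\bigr)<D$; but that is the main content, and establishing it amounts to the uniformization your claim tries to bypass. The paper's proof does this work explicitly: it uses Lemma \ref{lem:large_equiv} to upgrade ``$Z_x$ not lean in $X_x$'' to the box-level statement that $Z_x$ and $X_x$ agree on an open box and are there a permuted $\mathcal C^r$ graph, selects the boxes and the fiber projection uniformly in $x$ (finiteness of the set of projections, Lemma \ref{lem:definable_selection}, then Lemma \ref{lem:mfd_cr} to make the selections $\mathcal C^r$), assembles the total graphing map $\psi(x,y)=(\varphi(x),f_x(y))$, makes it $\mathcal C^r$ on a box via Lemma \ref{lem:dim3}, and finally constructs by an explicit $\varepsilon$--$\delta$ argument a single open box $O\subseteq F^n$ with $O\cap Z=O\cap X$ a permuted $\mathcal C^r$ graph, so that Lemma \ref{lem:large_equiv} yields that $Z$ is $r$-large, hence not lean, in $X$. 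Note also that the box-level agreement $O\cap Z=O\cap X$, not merely a dimension count inside the regular locus, is what Lemma \ref{lem:large_equiv} requires.
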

\begin{proof}
	Fix a positive integer $r$.
	We may assume that $\pi$ is the projection onto the first $m$ coordinates by permuting the coordinate if necessary.
	Set $d=\dim \pi(X)$ and $e=\dim (X \cap \pi^{-1}(x))$ for $x \in \pi(X)$.
	The number $e$ is independent of choice of $x \in \pi(X)$ by the assumption.
	We prove the contraposition.
	Assume that $W$ is not lean in $\pi(X)$.
	The definable set $W$ is $r$-large in $\pi(X)$ by Proposition \ref{prop:lean_equiv}.
	There exist an open box $U$ in $F^m$, a coordinate projection $\pi_1:F^m \to F^d$ and a definable $\mathcal C^r$ map $\varphi: \pi_1(U) \to F^{m-d}$ such that $U \cap W = U \cap \pi(X)$ and $U \cap \pi(X)$ is a permutated graph of $\varphi$ under $\pi_1$ by Lemma \ref{lem:large_equiv}. 
	We may assume that $\pi_1$ is the coordinate projection onto the first $d$ coordinates as usual.
	Let $\widetilde{\varphi}:\pi_1(U) \to F^m$ be the definable map given by $\widetilde{\varphi}(x)=(x,\varphi(x))$.
	Note that $U \cap W=U \cap \pi(X)$ is contained in $\myReg_r(\pi(X))$ by Lemma \ref{lem:regular}.
	Therefore, $U \cap W$ is a definable $\mathcal C^r$ submanifold of $F^m$ of dimension $d$ by Corollary \ref{cor:regular1} and Corollary \ref{cor:regular2}.
	
	Let $\Pi$ be the set of coordinate projections of $F^{n-m}$ onto $F^e$.
	Set $X_x:=\{y \in F^{n-m}\;|\; (x,y) \in X\}$ for each $x \in F^m$.
	We define $Z_x$ in the same manner.
	For any $\rho \in \Pi$, we define the formula $\Phi_{\rho}(x,y,z)$ as follows:
	\begin{align*}
		\Phi_\rho(x,y,z) =&\left( \bigwedge_{i=1}^{n-m} y_i<z_i \right)\wedge \left(Z_x \cap U_{y,z}=X_x \cap U_{y,z}\right) \\
		&\wedge X_x \cap U_{y,z} \text{ is the permutated graph of a definable }\mathcal C^r\text{ map}\\
		&\quad \text{ defined on }\rho(U_{y,z}) \text{ under }\rho.
	\end{align*}
	Here, $y=(y_1,\ldots, y_{n-m})$, $z=(z_1,\ldots, z_{n-m})$ and  $U_{y,z}$ is the open box given by $\prod_{i=1}^{n-m} (y_i,z_i)$.
	
	By Proposition \ref{prop:lean_equiv} and Lemma \ref{lem:large_equiv}, for any $x \in W$,  we have $\mathcal F \models \exists y\ \exists z\ \Phi_\rho(x,y,z)$ for some $\rho \in \Pi$.
	Set $V_{\rho}=\{x \in U \cap W\;|\; \mathcal F \models \exists y\ \exists z\ \Phi_\rho(x,y,z)\}$, then we have $U \cap W = \bigcup_{\rho \in \Pi}V_{\rho}$.
	We have $\dim V_{\rho_1} = \dim U \cap W=d$ for some $\rho_1 \in \Pi$ by Proposition \ref{prop:dim}(2) because $\Pi$ is a finite set.
	It implies that $V_{\rho_1}$ has a nonempty interior in $\myReg_r(\pi(X))$ by Corollary \ref{cor:regular2} and Corollary \ref{cor:mfd_same_dim}.
	Therefore, we may assume that $V_{\rho_1}$ coincides with $U \cap W$ by shrinking $U$ if necessary.
	We may assume that $\rho_1$ is the projection onto the first $e$ coordinates by permuting the coordinates if necessary.
	By Lemma \ref{lem:definable_selection}, there exist definable maps $\tau_1,\tau_2:U \cap W \to F^{n-m}$ such that $\mathcal F \models \Phi_{\rho_1}(x,\tau_1(x),\tau_2(x))$ for each $x \in U \cap W$. 
	We may assume that $\tau_1$ and $\tau_2$ are of class $\mathcal C^r$ by Lemma \ref{lem:mfd_cr} by shrinking $U$ if necessary.
	Set $D=\{(x,y) \in \pi_1(U) \times F^e\;|\; y \in \rho_1(U_{\tau_1(\widetilde{\varphi}(x)), \tau_2(\widetilde{\varphi}(x))})\})$.
	It is obviously an open subset of $F^{d+e}$.
	Recall that $X_{\widetilde{\varphi}(x)} \cap U_{\tau_1(\widetilde{\varphi}(x)), \tau_2(\widetilde{\varphi}(x))}$ is the graph of a definable $\mathcal C^r$ map defined on $ \rho_1(U_{\tau_1(\widetilde{\varphi}(x)), \tau_2(\widetilde{\varphi}(x))})$, say $f_x$.
	We consider the definable map $\psi:D \to F^{m-d} \times F^{n-m-e}$ given by $\psi(x,y)=(\varphi(x),f_x(y))$.
	There exist a nonempty open box $C_1$ contained in $\pi_1(U)$ and a nonempty open box $C_2$ in $F^e$ such that $C_1 \times C_2 \subseteq D$ and the restriction of $\psi$ to $C_1 \times C_2$ is of class $\mathcal C^r$ by Lemma \ref{lem:dim3}.
	Take a point $(s,t) \in C_1 \times C_2$.
	We next take a small positive $\varepsilon>0$ so that $\mathcal B_m(\widetilde{\varphi}(s),\varepsilon):=\{x \in F^m\;|\; |x-\widetilde{\varphi}(s)|<\varepsilon\} \subseteq U$ and $|p_i(f_s(t))-p_i(\tau_j(\widetilde{\varphi}(s)))|>\varepsilon$ for each $1 \leq i \leq n-m-e$ and $j=1,2$, where $p_i:F^{n-m-e} \to F$ denotes the coordinate projection onto the $i$-th coordinate.
	Take $\delta>0$ so that $\delta<\varepsilon$, $(s',t') \in C_1 \times C_2$, $ |\psi(s',t')-\psi(s,t)|<\varepsilon/2$ and $|\tau_j(s')-\tau_j(s)|<\varepsilon/2$ for $j=1,2$ whenever $|s-s'|<\delta$ and $|t-t'|<\delta$. 
	Set $O=(U \cap (\mathcal B_d(s,\delta) \times F^{m-d})) \times \mathcal B_e(t,\delta) \times \mathcal B_{n-m-e}(f_s(t),\varepsilon/2)$.
	We have $O \cap Z=O \cap X$ and $O \cap X$ is the permutated graph of the restriction of the definable $\mathcal C^r$ map $\psi$.
	It implies that $Z$ is $r$-large in $X$ by Lemma \ref{lem:large_equiv}, which is equivalent to that $Z$ is not lean by Proposition \ref{prop:lean_equiv}.
\end{proof}

\section{Definable $\mathcal C^r$ structures on definable topological groups}\label{sec:defnable_grpup}

We prove the main theorem in this section.
We first recall several definitions.
\begin{definition}\label{def:topology}
	Let $\mathcal M=(M,\ldots)$ be a model-theoretic structure.
	Let $X$ and $T$ be definable sets.
	A parameterized family $\{S_t\;|\;t \in T\}$ of definable subsets of $X$ is called \textit{definable} if the union $\bigcup_{t \in T} \{t\} \times S_t$ is definable.
	
	A topological space $(X,\tau)$ is a \textit{definable topological space} if $X$ is a definable set and there exists a definable parametrized family of sets $\mathcal B$ which is an open base for the topology $\tau$.
	We call the open base $\mathcal B$ a \textit{definable open base} of the topology $\tau$.
	%	
	%	We say that a definable topological space $(X,\tau)$ is \textit{definably regular}, for each point $x_0 \in X$ and a definable closed subset $K$ of $X$ with $x_0 \notin K$, there exist definable open subsets $U_1$ and $U_2$ of $X$ such that $x_0 \in U_1$, $K \subseteq U_2$ and $U_1 \cap U_2=\emptyset$.  
	
	Consider the case in which $\mathcal M$ is an expansion of a dense linear order $(M,<)$.
	The linear order $<$ induces a topology on $M$ called the \textit{order topology}.
	The Cartesian product $M^n$ equips the product topology of the order topology.
	Any definable subset $X$ of $M^n$ has the relative topology induced from the product topology of $M^n$.
	It is called the \textit{affine topology} on $X$ in this paper.
	%It is easy to show that a definable topological space equipped with the affine topology is always definably regular.
\end{definition}

\begin{definition}
	Consider a model-theoretic structure $\mathcal M$.
	A group $(G,\cdot)$ is \textit{definable} if both the underlying space $G$ and the multiplication $\cdot:G \times G \to G$ are definable. 
	
	We call the pair of a definable group $G$ and a definable topology $\tau$ on $G$ is 
	a \textit{definable topological group} if the multiplication and the inverse in $G$ are continuous maps under the topology $\tau$.
	We consider the affine topology in this paper unless the topology is explicitly given.
\end{definition}

\begin{definition}\label{def:mfd}
	Consider a definably complete expansion of an ordered field.
	Let $r$ be a positive integer.
	An \textit{abstract definable $\mathcal C^r$ manifold} $((X,\tau),(\varphi_i:U_i \to V_i)_{i \in I})$  is a pair of a definable topological space $(X,\tau)$ and a finite family of definable homeomorphisms $(\varphi_i:U_i \to V_i)_{i \in I}$ from an open set $U_i$ of $X$ onto a definable $\mathcal C^r$ submanifold $V_i$ such that $(U_i)_{i \in I}$ is a finite cover of $X$ by definable open subsets of $X$ such that the induced map $\varphi_{ij}:\varphi_i(U_i \cap U_j) \ni x \mapsto \varphi_j(\varphi_i^{-1}(x)) \in \varphi_j(U_i \cap U_j)$ is a definable $\mathcal C^r$ diffeomorphism for each $i,j \in I$ whenever $U_i \cap U_j \neq \emptyset$. 
	We call the family $(\varphi_i:U_i \to V_i)_{i \in I}$ a \textit{definable $\mathcal C^r$ atlas} of the abstract definable $\mathcal C^r$ manifold.
	The Cartesian product of two abstract definable $\mathcal C^r$ manifolds is naturally an abstract definable $\mathcal C^r$ manifold.
	We often write $(X,(\varphi_i:U_i \to V_i)_{i \in I})$ instead of $((X,\tau),(\varphi_i:U_i \to V_i)_{i \in I})$  because the topology $\tau$ is uniquely determined by the family of definable bijective maps $(\varphi_i:U_i \to V_i)_{i \in I}$.
	
	A \textit{definable map} $f:(X,(\varphi_i:U_i \to V_i)_{i \in I}) \to (Y,(\psi_j:U'_j \to V'_j)_{j \in J})$ between abstract definable $\mathcal C^r$ manifolds is a definable map from the definable set $X$ to the definable set $Y$.
	We simply denote it by $f:X \to Y$ when the definable $\mathcal C^r$ atlases are clear from the context.
	A definable map $f:(X,(\varphi_i:U_i \to V_i)_{i \in I}) \to (Y,(\psi_j:U'_j \to V'_j)_{j \in J})$ is \textit{of class $\mathcal C^r$} if the composition $\psi_j \circ f \circ \varphi_i^{-1}: \varphi_i^{-1}(f^{-1}(U'_j) \cap U_i) \to V'_j$ is of class $\mathcal C^r$ for each $i \in I$ and $j \in J$ whenever $f^{-1}(U'_j) \cap U_i \neq \emptyset$.
\end{definition}

\begin{definition}\label{def:crstr}
	Consider a definably complete expansion of an ordered field.
	Let $r$ be a positive integer.
	Let $G$ be a definable topological group.
	A \textit{definable $\mathcal C^r$ structure on $G$} is a finite family  $(\varphi_i:U_i \to V_i)_{i \in I}$ of definable homeomorphisms such that $(G, (\varphi_i:U_i \to V_i)_{i \in I})$ is an abstract definable $\mathcal C^r$ manifold and the multiplication and inversion are of class $\mathcal C^r$ as definable maps between abstract definable $\mathcal C^r$ manifolds. 
\end{definition}

We are now ready to show that a definable topological group is covered by a finitely many translates of a huge definable subset.
It is proven in Proposition \ref{porp:top_grp_cover}.
We have introduced the notion of partition degree to prove it by induction.
We first prove a lemma.

\begin{lemma}\label{lem:top_grp_cover_sub}
	Consider a d-minimal expansion of an ordered field.
	Let $G$ be a definable topological group and $V$ be a definable subset of $G$ which is huge in $G$.
	Let $W$ be a definable subset of $G$.
	Set $A=\{g \in G\;|\; W \cap (G \setminus gV) \text{ is lean in }W\}$.
	The definable set $A$ is not lean in $G$.
	In particular, $A$ is not an empty set.
\end{lemma}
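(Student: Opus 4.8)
The plan is to introduce the definable set
\[
Z=\{(g,w)\in G\times W \mid g^{-1}w\in G\setminus V\},
\]
whose fibre over $g$, namely $Z_g:=\{w\in G\mid (g,w)\in Z\}$, is exactly $W\cap(G\setminus gV)$, and to prove that $Z$ is lean in $G\times W$; the statement about $A$ will then drop out of Lemma~\ref{lem:lean_product} after exchanging the two coordinates. Fix a positive integer $r$ and write $d=\dim G$, $e=\dim W$. First I would examine the fibres of $Z$ over the second coordinate: for $w\in W$ the set $\{g\in G\mid g^{-1}w\notin V\}$ is the preimage of $G\setminus V$ under the map $\alpha_w\colon G\to G$, $\alpha_w(g)=g^{-1}w$. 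Since $G$ is a definable topological group, $\alpha_w$ is a definable homeomorphism with inverse $h\mapsto wh^{-1}$, and $G\setminus V$ is lean in $G$ because $V$ is huge; hence each such fibre is lean in $G$ by Lemma~\ref{lem:lean_homeo}.

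Next I would deduce that $Z$ is lean in $G\times W$. By Proposition~\ref{prop:lean_equiv} and Corollary~\ref{cor:product_reg} this amounts to showing $\dim\bigl(Z\cap(\myReg_r(G)\times\myReg_r(W))\bigr)<d+e$. Projecting $Z\cap(\myReg_r(G)\times\myReg_r(W))$ onto its second coordinate, the fibre over $w\in\myReg_r(W)$ is $\{g\in\myReg_r(G)\mid g^{-1}w\notin V\}$, which has dimension $<d$ by the previous paragraph and Proposition~\ref{prop:lean_equiv}. Partitioning $\myReg_r(W)$ into the finitely many definable strata on which this fibre dimension is constant and applying Corollary~\ref{cor:dim1}(2) to each stratum, together with Proposition~\ref{prop:dim}(2), yields $\dim\bigl(Z\cap(\myReg_r(G)\times\myReg_r(W))\bigr)\le (d-1)+e<d+e=\dim(G\times W)$. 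Thus $Z$ is lean in $G\times W$.

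Finally I would transfer this to $A$. The coordinate exchange $\sigma\colon G\times W\to W\times G$ is a definable homeomorphism carrying $Z$ to a set that is lean in $W\times G$ by Lemma~\ref{lem:lean_homeo}. Applying Lemma~\ref{lem:lean_product} with $W$ in the role of $X$, $G$ in the role of $Y$, and $\sigma(Z)$ in the role of $Z$, the set of those $g\in G$ for which $\pi_1\bigl(\pi_2^{-1}(g)\cap\sigma(Z)\bigr)=Z_g=W\cap(G\setminus gV)$ is not lean in $W$ is lean in $G$; but this set is precisely $G\setminus A$, so $G\setminus A$ is lean in $G$. Since $\dim\myReg_r(G)=d$ by Corollary~\ref{cor:regular1} and $\myReg_r(G)$ is the disjoint union of $A\cap\myReg_r(G)$ and $(G\setminus A)\cap\myReg_r(G)$, Proposition~\ref{prop:dim}(2) combined with $\dim\bigl((G\setminus A)\cap\myReg_r(G)\bigr)<d$ forces $\dim\bigl(A\cap\myReg_r(G)\bigr)=d$; hence $A$ is not lean in $G$, and in particular nonempty, by Proposition~\ref{prop:lean_equiv}.

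The step I expect to be the main obstacle is the passage from fibrewise leanness to leanness of $Z$ itself, since leanness is defined through interiors and is not automatically compatible with a Fubini-type argument. The key to circumventing this is to restrict everything to the regular loci: Corollary~\ref{cor:product_reg} identifies $\myReg_r(G\times W)$ with the genuine product $\myReg_r(G)\times\myReg_r(W)$, and Proposition~\ref{prop:lean_equiv} reinterprets leanness as a dimension inequality on these regular sets, at which point the elementary fibre-dimension estimate of the second paragraph applies.
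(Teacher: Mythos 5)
Your proof is correct and is essentially the paper's own argument run in the forward direction rather than by contradiction: the paper introduces the same incidence set $X=\{(g,w)\in G\times W\mid w\notin gV\}$, uses the same observation that each fibre over $w\in W$ is a homeomorphic image of $G\setminus V$ and hence lean, and combines Proposition~\ref{prop:lean_equiv}, Corollary~\ref{cor:product_reg}, Corollary~\ref{cor:dim1}(2) and Lemma~\ref{lem:lean_product} exactly as you do. The only cosmetic differences are that the paper assumes $A$ lean and derives a contradiction, whereas you prove leanness of $Z$ directly and then invoke Lemma~\ref{lem:lean_product} in its stated direction (being slightly more careful than the paper about the coordinate swap needed to match that lemma's hypotheses).
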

\begin{proof}
	Set $B=\{g \in G\;|\; W \cap (G \setminus gV) \text{ is not lean in }W\}$.
	We have $B=G \setminus A$.
	Assume for contradiction that $A$ is lean in $G$.
	The definable set $B$ is not lean in $G$ by Lemma \ref{lem:sum_large}.
	Consider the definable set $X=\{(g,w) \in G \times W\;|\; w \in W \cap (G \setminus gV)\}$.
	The definable set $X$ is not lean in $G \times W$ by Lemma \ref{lem:lean_product}.
	Fix a positive integer $r$.
	We have $\dim (X \cap \myReg_r(G \times W))=\dim (G \times W)$ by Proposition \ref{prop:lean_equiv}.
	We get $\dim (X \cap (\myReg_r(G) \times \myReg_r(W))) = \dim (G \times W)$ by Corollary \ref{cor:product_reg}.
	
	For any $w \in W$, we set $X^w:=\{g \in G\;|\; (g,w) \in X\} \subseteq \{g \in G\;|\; w \notin gV\}=w \cdot  (G \setminus V)^{-1}$.
	The definable set $w \cdot  (G \setminus V)^{-1}$ is lean in $G$ by Lemma \ref{lem:lean_homeo} because the inversion and multiplication by $w$ are definable homeomorphisms.
	The definable subset $X^w$  of $w \cdot  (G \setminus V)^{-1}$ is also lean in $G$.
	This implies the inequality $\dim (X^w \cap \myReg_r(G))<\dim G$ for each $w \in W$ by Proposition \ref{prop:lean_equiv}.
	We get $\dim (X \cap (\myReg_r(G) \times \myReg_r(W))) < \dim G + \dim W = \dim (G \times W)$ by Corollary \ref{cor:dim1}(2), which is absurd.
\end{proof}

\begin{proposition}\label{porp:top_grp_cover}
	Consider a d-minimal expansion of an ordered field.
	Let $G$ be a definable topological group and $V$ be a definable subset of $G$ which is huge in $G$.
	There exist finitely many elements $g_1,\ldots, g_m$ in $G$ such that $G=\bigcup_{i=1}^m g_iV$.
\end{proposition}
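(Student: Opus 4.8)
The plan is to prove the stronger statement that \emph{every} definable subset $W$ of $G$ is contained in a finite union of left translates of $V$; applying this to $W=G$ then yields the proposition. I would argue by induction on the pair $(\dim W, \mypdeg W)$ equipped with the lexicographic order. This order is well-founded: $\dim W$ ranges over the finite set $\{-\infty,0,1,\dots,\dim G\}$, and for each fixed value of the dimension the partition degree $\mypdeg W$ is a nonnegative integer, finite by Theorem \ref{thm:finite_pdeg}. So any strictly decreasing sequence of such pairs terminates, and the induction is legitimate.

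For the inductive step, fix a nonempty definable $W \subseteq G$ and apply Lemma \ref{lem:top_grp_cover_sub} to $W$. Since the set $A$ produced there is nonempty, there is an element $g \in G$ for which $W':= W \setminus gV = W \cap (G \setminus gV)$ is lean in $W$; note $W'$ is definable because $gV$ is definable. If $W'$ is empty, then $W \subseteq gV$ and we are finished with a single translate. Otherwise I claim $(\dim W', \mypdeg W')$ is strictly smaller than $(\dim W, \mypdeg W)$ in the lexicographic order: if $\dim W' < \dim W$ this is immediate, while if $\dim W' = \dim W$ then Lemma \ref{lem:lean_imply_ineq} forces $\mypdeg W' < \mypdeg W$. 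Hence the induction hypothesis applies to $W'$, giving finitely many $g_1,\dots,g_k$ with $W' \subseteq \bigcup_{i=1}^k g_iV$. Since $W = (W \cap gV) \cup W' \subseteq gV \cup \bigcup_{i=1}^k g_iV$, the step is complete.

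The conceptual heart of the argument, and the step I expect to be the main obstacle, is precisely the strict decrease of the invariant. Leanness of $W'$ in $W$ does \emph{not} by itself lower the dimension (a lean set may still be of full dimension), so a naive induction on $\dim$ alone would break down here; this is exactly the difficulty that motivates introducing the partition degree in the first place. What rescues the argument is pairing $\dim$ with $\mypdeg$ and invoking Lemma \ref{lem:lean_imply_ineq}, which guarantees that a full-dimensional lean subset has strictly smaller partition degree. The remaining points—that $W' = W\setminus gV$ is definable, that $gV\subseteq G$ so $W\cap(G\setminus gV)=W\setminus gV$, and that the lexicographic order on $(\dim,\mypdeg)$ is well-founded—are routine and supply the scaffolding for the induction.
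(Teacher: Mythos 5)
Your proposal is correct and follows essentially the same route as the paper: the paper iteratively sets $W_{i+1}=W_i\cap(G\setminus g_{i+1}V)$ using Lemma \ref{lem:top_grp_cover_sub} to pick $g_{i+1}$ making $W_{i+1}$ lean in $W_i$, and then invokes Lemma \ref{lem:lean_imply_ineq} and the well-foundedness of the lexicographic order on $(\dim,\mypdeg)$ exactly as you do. Recasting this as a well-founded induction on the pair $(\dim W,\mypdeg W)$ for arbitrary definable $W\subseteq G$ is only a cosmetic repackaging of the same argument.
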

\begin{proof}
	We set $W_0=G$ and $W_i=G \setminus (\bigcup_{j=1}^i g_jV)$ for $i>0$ when elements $g_1,\ldots, g_i  \in G$ are fixed.
	Note that any strictly decreasing sequence of $\mathbb Z_{\geq 0} \times \mathbb Z_{\geq 0} $ under the lexicographic order is of finite length, where $\mathbb Z_{\geq 0}$ denotes the set $\{z \in \mathbb Z\;|\; z \geq 0\}$.
	Therefore, if we can choose $g_{i+1} \in G$ so that at least one of the inequalities $\dim W_{i+1}<\dim W_i$ and $\mypdeg W_{i+1}<\mypdeg W_i$ holds, we get $W_n=\emptyset$ for some positive integer $n$.
	Apply Lemma \ref{lem:top_grp_cover_sub}, then we can choose $g_{i+1} \in G$ so that $W_{i+1}= G \setminus (\bigcup_{j=1}^{i+1} g_jV)=W_i \cap (G \setminus g_{i+1}V) $ is lean in $W_i$.
	We have $\dim W_{i+1}<\dim W_i$ or $\mypdeg W_{i+1}<\mypdeg W_i$ by Lemma \ref{lem:lean_imply_ineq}.
\end{proof}

We next prove several lemmas used in the proof of the main theorem.

\begin{lemma}\label{lem:basic_grp1}
	Consider a d-minimal expansion of an ordered field.
	Let $G$ be a definable topological group and $V$ be a definable subset of $G$ which is huge in $G$.
	For any $g \in G$, there exist $g_1,g_2 \in V$ such that $g=g_1g_2$.
\end{lemma}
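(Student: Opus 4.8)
The plan is to reduce the factorisation problem to a nonemptiness statement about an intersection of two huge sets. Fix $g \in G$. Writing $g = g_1 g_2$ with $g_1, g_2 \in V$ is equivalent to choosing $g_1 \in V$ subject to $g_2 := g_1^{-1} g \in V$; once such a $g_1$ is found, $g_2$ is determined and the identity $g_1 g_2 = g$ is automatic. The condition $g_1^{-1} g \in V$ is, after setting $v = g_1^{-1} g$, the same as $g_1 = g v^{-1} \in g V^{-1}$. Hence the set of admissible $g_1$ is exactly $V \cap g V^{-1}$, and it suffices to prove that $V \cap g V^{-1}$ is nonempty.

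To this end I would show that each of the two factors is huge in $G$. Since $V$ is huge in $G$ by hypothesis, it remains to treat $g V^{-1}$. The inversion map $\iota : G \to G$, $x \mapsto x^{-1}$, is a definable homeomorphism, so by Lemma \ref{lem:lean_homeo} the image $\iota(V) = V^{-1}$ is huge in $\iota(G) = G$. Likewise, left translation $L_g : G \to G$, $x \mapsto g x$, is a definable homeomorphism, because the multiplication and the inverse of $G$ are continuous; a second application of Lemma \ref{lem:lean_homeo} then gives that $L_g(V^{-1}) = g V^{-1}$ is huge in $G$.

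Having exhibited $V$ and $g V^{-1}$ as two definable subsets both huge in $G$, the intersection $V \cap g V^{-1}$ is huge in $G$ by the ``in addition'' part of Lemma \ref{lem:sum_large}. A huge subset of a nonempty definable set is itself nonempty: were $V \cap g V^{-1}$ empty, its complement would be all of $G$, whose interior in $G$ is $G$ itself, of dimension $\dim G$, contradicting leanness of the complement. Since $G$ contains the identity it is nonempty, so $V \cap g V^{-1} \neq \emptyset$; choosing any $g_1$ in it and setting $g_2 = g_1^{-1} g$ completes the argument.

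There is no genuinely hard analytic step here; the only thing requiring care is the bookkeeping of the group algebra, namely the identification of the solution set $\{g_1 \in V \;|\; g_1^{-1} g \in V\}$ with $V \cap g V^{-1}$, together with the verification that both inversion and left translation are definable homeomorphisms so that Lemma \ref{lem:lean_homeo} genuinely applies. The conceptual content is carried entirely by the stability of hugeness under definable homeomorphisms and under finite intersection.
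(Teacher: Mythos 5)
Your proof is correct and essentially identical to the paper's: the paper applies Lemma \ref{lem:lean_homeo} once to the single homeomorphism $h \mapsto gh^{-1}$ to see that $gV^{-1}$ is huge, then intersects with $V$ via Lemma \ref{lem:sum_large}, exactly as you do after splitting that map into inversion followed by left translation. The only difference is that bookkeeping split, which changes nothing of substance.
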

\begin{proof}
	Consider the definable map $f:G \to G$ given by $f(h)=gh^{-1}$.
	It is a definable homeomorphism because $G$ is a definable topological group.
	The image $f(V)$ is huge by Lemma \ref{lem:lean_homeo}.
	The intersection $f(V) \cap V$ is huge by Lemma \ref{lem:sum_large}.
	In particular, the intersection is not empty.
	There exists $g_1, g_2 \in V$ such that $g_1=f(g_2)$.
	This implies that $g=g_1g_2$.
\end{proof}

\begin{lemma}\label{lem:basic_grp2}
	Consider a d-minimal expansion of an ordered field.
	Let $G$ be a definable topological group and $V$ be a definable subset of $G$ which is huge in $G$.
	Then the set $\{(g_1,g_2) \in G \times G\;|\; g_1g_2 \in V\}$ is huge in $G \times G$
\end{lemma}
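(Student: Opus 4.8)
The plan is to transport the set $S:=\{(g_1,g_2)\in G\times G \mid g_1g_2\in V\}$ to a Cartesian product by means of a definable homeomorphism of $G\times G$, and then invoke the behaviour of hugeness under products and under homeomorphisms, both of which are already available. The natural choice is the \emph{shearing} map $m:G\times G\to G\times G$ defined by $m(g_1,g_2)=(g_1,g_1g_2)$. Because the multiplication and the inversion of $G$ are continuous, $m$ is a definable homeomorphism: it is definable and continuous, and its inverse $(g_1,h)\mapsto(g_1,g_1^{-1}h)$ is definable and continuous for the same reason.

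The first step is to compute the image $m(S)$. If $(g_1,g_2)\in S$, then its image $(g_1,g_1g_2)$ has second coordinate in $V$; conversely, for any $g_1\in G$ and any $v\in V$ the pair $(g_1,g_1^{-1}v)$ lies in $S$ and is sent by $m$ to $(g_1,v)$. Hence $m(S)=G\times V$. Now $G$ is trivially huge in itself, since $G\setminus G=\emptyset$ is lean in $G$, and $V$ is huge in $G$ by hypothesis; therefore $G\times V$ is huge in $G\times G$ by Lemma \ref{lean:product}.

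Finally, since $m$ is a definable homeomorphism of $G\times G$ onto itself and $m(S)=G\times V$ is huge in $G\times G$, Lemma \ref{lem:lean_homeo} yields that $S$ itself is huge in $G\times G$, which is the assertion. I do not expect a genuine obstacle: the only real idea is the recognition that the shearing map $m$ converts the multiplicative constraint $g_1g_2\in V$ into the product condition defining $G\times V$, after which the conclusion is purely formal. The single point meriting care is that $m$ is indeed a homeomorphism for the affine topology, and this is exactly the content of the hypothesis that $G$ is a definable topological group.
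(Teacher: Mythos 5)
Your proof is correct and follows essentially the same route as the paper: both use the shearing homeomorphism $(g_1,g_2)\mapsto(g_1,g_1g_2)$ together with Lemma \ref{lem:lean_homeo} to reduce to the product set. The only cosmetic difference is that you invoke Lemma \ref{lean:product} to see that $G\times V$ is huge, whereas the paper works with the complement $G\times(G\setminus V)$ and verifies leanness by a direct dimension computation via $\operatorname{Reg}_r$ and Proposition \ref{prop:lean_equiv}.
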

\begin{proof}
	Set $X=\{(g_1,g_2) \in G \times G\;|\; g_1g_2 \notin V\}$.
	We have only to prove that $X$ is lean in $G \times G$.
	Consider the definable homeomorphism $f:G \times G \to G \times G$ given by $f(g_1,g_2)=(g_1,g_1g_2)$.
	We have $f(X)=G \times (G \setminus V)$.
	Fix a positive integer $r$.
	The inequality $\dim (\myReg_r(G) \setminus V)<\dim G$ holds by Proposition \ref{prop:lean_equiv}.
	We have $\dim ((G \times (G \setminus V)) \cap \myReg_r(G \times G)) = \dim ((G \times (G \setminus V)) \cap (\myReg_r(G)\times \myReg_r(G)))=\dim (\myReg_r(G) \times (\myReg_r(G) \setminus V))<2\dim G=\dim(G \times G)$ by  Corollary \ref{cor:product_reg} and Corollary \ref{cor:dim1}(2).
	It implies that $G \times (G \setminus V)$ is lean in $G \times G$ by Proposition \ref{prop:lean_equiv}.
	Since $f$ is a definable homeomorphism, $X$ is lean in $G \times G$ by Lemma \ref{lem:lean_homeo}.
\end{proof}

\begin{lemma}\label{lem:basic_grp3}
	Consider a d-minimal expansion of an ordered field.
	Let $G$ be a definable topological group and $V$ be a definable subset of $G$ which is huge in $G$.
	Let $Y$ be a definable subset of $G \times G$ which is huge in $G \times G$.
	The definable subsets 
	\begin{align*}
		&W_1=\{a \in V\;|\; \{b \in G\;|\; (b,a) \in Y\} \text{ is huge in }G\} \text{ and }\\
		&W_2=\{a \in V\;|\; \{b \in G\;|\; (b^{-1},ba) \in Y\} \text{ is huge in }G\}
	\end{align*}
	of $G$ are huge in $G$.
\end{lemma}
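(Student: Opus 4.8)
The plan is to derive both assertions from the leanness-transfer Lemma~\ref{lem:lean_product}, using that a definable set is huge in $G$ exactly when its complement is lean in $G$. Since $V$ is huge, the set $G \setminus V$ is lean in $G$; hence, writing $Z := (G \times G) \setminus Y$ (which is lean in $G \times G$ because $Y$ is huge), it suffices for $W_1$ to prove that
\[
A := \{a \in G \mid \{b \in G \mid (b,a) \in Z\} \text{ is not lean in } G\}
\]
is lean in $G$. Indeed, for a fixed $a$ the fiber $\{b \mid (b,a) \in Y\}$ is huge in $G$ precisely when its complement $\{b \mid (b,a) \in Z\}$ is lean in $G$, so $G \setminus W_1 \subseteq (G \setminus V) \cup A$; this is a union of two lean sets, hence lean by Lemma~\ref{lem:sum_large}, and $W_1$ is huge.

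First I would apply Lemma~\ref{lem:lean_product} to $Z \subseteq G \times G$, taking the first factor to carry the variable $b$ and the second to carry $a$. With this bookkeeping the fiber $\pi_2^{-1}(a) \cap Z$ projects under $\pi_1$ onto exactly $\{b \mid (b,a) \in Z\}$, so the set denoted $W$ in that lemma is precisely $A$. Since $Z$ is lean in $G \times G$, the lemma yields that $A$ is lean in $G$, settling the case of $W_1$.

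For $W_2$ I would reduce to the previous case through a twist. Consider the map $\Phi : G \times G \to G \times G$ given by $\Phi(b,a) = (b^{-1}, ba)$; it is a definable homeomorphism because multiplication and inversion are definable and continuous, with inverse $\Phi^{-1}(u,v) = (u^{-1}, uv)$. Put $Y' := \Phi^{-1}(Y)$. Since $\Phi(Y') = Y$ is huge, Lemma~\ref{lem:lean_homeo} shows $Y'$ is huge in $G \times G$. The key observation is that for each fixed $a$ one has $\{b \in G \mid (b^{-1}, ba) \in Y\} = \{b \in G \mid (b,a) \in Y'\}$, so $W_2$ coincides with the set $W_1$ built from $Y'$ in place of $Y$. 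Applying the argument of the first two paragraphs to the huge set $Y'$ then shows $W_2$ is huge in $G$.

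The routine ingredients are the equivalence ``fiber huge $\Leftrightarrow$ complementary fiber lean'' and matching the coordinates of $G \times G$ to the factors $X$ and $Y$ of Lemma~\ref{lem:lean_product}. The step needing the most care is the treatment of $W_2$: one must choose $\Phi$ so that the twisted fiber condition $(b^{-1},ba) \in Y$ becomes an ordinary fiber condition for the transported set $Y'$, and then confirm that $Y'$ is still huge. Once this identification is in place, no further dimension estimates are required.
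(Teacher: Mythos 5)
Your proof is correct. For $W_2$ you use exactly the paper's device: the definable homeomorphism $(b,a)\mapsto(b^{-1},ba)$ transports $Y$ to a huge set $Y'$ (Lemma \ref{lem:lean_homeo}) for which the twisted fiber condition becomes an ordinary one, reducing to the $W_1$ case. For $W_1$, however, your route is genuinely different from the paper's. The paper argues by contradiction and redoes the fiberwise dimension count by hand: assuming $W_1$ is not huge, it shows $\dim\left((\myReg_r(G)\cap V)\setminus W_1\right)=\dim G$, builds the set $\bigcup_a\left((\myReg_r(G)\setminus Y_a)\times\{a\}\right)$, and uses Corollary \ref{cor:dim1}(2) and Corollary \ref{cor:product_reg} to contradict the hugeness of $Y$. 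You instead pass to the complement $Z=(G\times G)\setminus Y$, invoke Lemma \ref{lem:lean_product} (which the paper proves but does not cite here) to conclude that $A=\{a\mid \{b\mid(b,a)\in Z\}\text{ is not lean}\}$ is lean, and finish with the inclusion $G\setminus W_1\subseteq (G\setminus V)\cup A$ together with Lemma \ref{lem:sum_large} and the trivial fact that a subset of a lean set is lean. The underlying computation is the same --- Lemma \ref{lem:lean_product} encapsulates precisely the $\myReg_r$-fiber argument the paper writes out --- so your version is shorter and reuses existing machinery, at the cost of the small bookkeeping step identifying the fiber $\pi_1(\pi_2^{-1}(a)\cap Z)$ with $\{b\mid(b,a)\in Z\}$, which you carry out correctly.
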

\begin{proof}
	Fix a positive integer $r$.
	We first prove that $W_1$ is huge in $G$.
	Assume for contradiction that $W_1$ is not huge.
	The definable set $G \setminus W_1$ is not lean.
	The equality $\dim (\myReg_r(G) \setminus W_1)=\dim G$ holds by Proposition \ref{prop:lean_equiv} .
	We want to show $\dim ((\myReg_r(G) \cap V) \setminus W_1)=\dim G$.
	Since the inclusion $\myReg_r(G) \setminus W_1 \subseteq ((\myReg_r(G) \cap V) \setminus W_1) \cup (\myReg_r(G) \setminus V)$ holds, we have $\dim G=\dim (\myReg_r(G) \setminus W_1) \leq \max\{\dim ((\myReg_r(G) \cap V) \setminus W_1), \dim (\myReg_r(G) \setminus V)\} \leq \dim G$ by Proposition \ref{prop:dim}(2).
	On the other hand, we get $\dim  (\myReg_r(G) \setminus V) < \dim G$ by Proposition \ref{prop:lean_equiv} because $V$ is huge in $G$.
	Consequently, we get $\dim ((\myReg_r(G) \cap V) \setminus W_1)=\dim G$.
	
	For each $a \in V$, set $Y_a=\{b \in G\;|\; (b,a) \in Y\}$.
	For any $a \in (\myReg_r(G) \cap V) \setminus W_1$, the equality $\dim( \myReg_r(G) \setminus Y_a)=\dim G$ holds by Proposition \ref{prop:lean_equiv} because $Y_a$ is not huge in $G$.
	Consider the definable set $$Z=\bigcup_{a \in (\myReg_r(G) \cap V) \setminus W_1}((\myReg_r(G) \setminus Y_a) \times \{a\}).$$
	We have $\dim Z=2\dim G=\dim(G \times G)$ by Corollary \ref{cor:dim1}(2).
	On the other hand, the inclusion $Z \subseteq \bigcup_{a \in \myReg_r(G)}((\myReg_r(G) \setminus Y_a) \times \{a\}) =\myReg_r(G )\times \myReg_r(G ) \setminus Y=\myReg_r(G \times G) \setminus Y$ holds by Corollary \ref{cor:product_reg}.
	We have $\dim (\myReg_r(G \times G) \setminus Y) \geq \dim Z=2\dim G$, which contradicts the assumption that $Y$ is huge in $G \times G$ by Proposition \ref{prop:lean_equiv}.
	
	We next prove that $W_2$ is huge in $G$.
	Consider the definable map $\rho:G \times G \to G \times G$ given by $\rho(b,a)=(b^{-1},ba)$.
	Since $G$ is a definable topological group, the map $\rho$ is a homeomorphism.
	The definable set $\rho^{-1}(Y)$ is huge in $G \times G$ by Lemma \ref{lem:lean_homeo}.
	We have $W_2=\{a \in V\;|\; \{b \in G\;|\; (b,a) \in \rho^{-1}(Y)\} \text{ is huge in }G\}$.
	The definable set $W_2$ is huge in $G$ by the first part of the lemma which we have already shown.
\end{proof}

We are now ready to prove the main theorem.

\begin{theorem}\label{thm:cr_structure_on_group}
Consider a d-minimal expansion of an ordered field.
Let $r$ be a positive integer.
Let $G$ be a definable topological group.
There exists a definable open subset $V$ and finitely many elements $g_1,\ldots, g_m$ of $G$ such that $V$ is a definable $\mathcal C^r$ submanifold and $(\varphi_i:U_i:=g_iV \ni g_i \cdot g \mapsto g \in V)_{1 \leq i \leq m}$ is a definable $\mathcal C^r$ structure on $G$.
\end{theorem}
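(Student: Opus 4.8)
The plan is to build the whole atlas out of translations, so that the theorem reduces to a single smoothing problem. Since $G$ is a definable topological group, for every $g$ the map $x\mapsto gx$ is a definable homeomorphism; hence each $\varphi_i=(g_i\cdot)^{-1}$ is automatically a homeomorphism of $g_iV$ onto $V$ with $\varphi_i^{-1}(x)=g_ix$. Consequently the transition map $\varphi_{ij}$ is left translation by $g_j^{-1}g_i$ restricted to $V\cap g_i^{-1}g_jV$, the chart expression of the multiplication is $(x,y)\mapsto g_k^{-1}(g_ix)(g_jy)$, and that of the inversion is $x\mapsto g_j^{-1}x^{-1}g_i^{-1}$. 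Thus it suffices to produce one huge open definable $\mathcal C^r$ submanifold $V$ on which (after a translation-based smoothing) left and right translations, the restricted multiplication on $V\times V$, and the restricted inversion are of class $\mathcal C^r$ on the relevant overlaps; the covering $G=\bigcup_i g_iV$ is then furnished by Proposition \ref{porp:top_grp_cover}, which applies precisely because $V$ is huge.

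First I would fix the raw smoothness data. Applying Lemma \ref{lem:huge_map} to the inversion $\iota\colon G\to F^n$ gives a huge open definable $\mathcal C^r$ submanifold on which $\iota$ is $\mathcal C^r$, and applying it to the multiplication $m\colon G\times G\to F^n$ gives a huge open definable $\mathcal C^r$ submanifold $\Omega\subseteq G\times G$ on which $m$ is $\mathcal C^r$. Using $\myReg_r(G\times G)=\myReg_r(G)\times\myReg_r(G)$ from Corollary \ref{cor:product_reg}, together with Lemma \ref{lean:product} and the Fubini-type Lemma \ref{lem:lean_product}, I would convert the hugeness of $\Omega$ into the statement that for a huge set of elements $g$ the translation $x\mapsto gx$ is $\mathcal C^r$ at a huge set of points $x$. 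Intersecting the finitely many huge sets arising in this way (Lemma \ref{lem:sum_large}) yields a candidate huge open $\mathcal C^r$ submanifold $V$ together with a huge set $S\subseteq G$ of good translation elements.

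The heart of the argument, and the step I expect to be the main obstacle, is upgrading this generic smoothness to genuine smoothness of $x\mapsto gx$ on all of $V\cap g^{-1}V$, for every $g$, by exploiting homogeneity. Here I would use Lemma \ref{lem:basic_grp1} and the two auxiliary sets of Lemma \ref{lem:basic_grp3} applied with the good set $\Omega$: for an arbitrary point $x$ one factors the relevant element through a generic $h$, writing $gx=(gh^{-1})(hx)$, with $h$, $hx$, and $gh^{-1}$ all landing in the smoothness locus, so that $x\mapsto gx$ is locally a composite of $\mathcal C^r$ maps near $x$. The content of Lemma \ref{lem:basic_grp3} is exactly that such a generic $h$ exists for a huge range of parameters, while Lemma \ref{lem:basic_grp1} guarantees that every element admits a factorization into members of a huge set; combining them should promote $\mathcal C^r$-ness from the huge locus to the whole overlap after shrinking $V$ to a still-huge open $\mathcal C^r$ submanifold $V^{*}$. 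The same device, applied to the parameter combination $(b^{-1},ba)$ appearing in the second set of Lemma \ref{lem:basic_grp3}, handles inversion and right translations in one stroke.

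Finally I would invoke Proposition \ref{porp:top_grp_cover} to select finitely many $g_1,\dots,g_m$ with $G=\bigcup_i g_iV^{*}$. Since the construction makes $x\mapsto gx$ of class $\mathcal C^r$ on $V^{*}\cap g^{-1}V^{*}$ for every $g$, each transition map $\varphi_{ij}$ is a $\mathcal C^r$ diffeomorphism (its inverse has the same form), and the chart expressions of $m$ and $\iota$ are finite composites of translations, the restricted multiplication on $\Omega$, and the restricted inversion, hence of class $\mathcal C^r$ by the chain rule for $\mathcal C^r$ maps between definable $\mathcal C^r$ submanifolds. Checking the bookkeeping of Definition \ref{def:mfd} and Definition \ref{def:crstr} then finishes the proof. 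The delicate points to monitor are that every smoothing shrinkage keeps $V^{*}$ open and huge, so that Proposition \ref{porp:top_grp_cover} still applies, and that the homogeneity upgrade is uniform enough to cover all the overlaps simultaneously rather than merely generically.
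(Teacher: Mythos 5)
Your proposal is correct and follows essentially the same route as the paper: both construct a huge open definable $\mathcal C^r$ submanifold $V$ and a huge set $Y\subseteq G\times G$ on which inversion and multiplication are $\mathcal C^r$ (via Lemma \ref{lem:huge_map}, Lemma \ref{lem:basic_grp2}, Lemma \ref{lem:basic_grp3} and Lemma \ref{lem:sum_large}), spread smoothness of the two-sided translations $x\mapsto axb$ and of $(x,y)\mapsto axby$ to entire overlaps by factoring through a generic element of a huge set (your $gx=(gh^{-1})(hx)$ device is exactly the paper's Claims 1 and 2, with $b=b_1b_2$ supplied by Lemma \ref{lem:basic_grp1}), and finish with the finite cover from Proposition \ref{porp:top_grp_cover} and a chart-by-chart chain-rule verification. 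The only cosmetic difference is that you route the ``generic translations are generically smooth'' step through the Fubini-type Lemma \ref{lem:lean_product}, whereas the paper packages the same dimension count inside Lemma \ref{lem:basic_grp3}.
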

\begin{proof}
	The proof of this theorem is very similar to that of \cite[Theorem 3.5]{Wencel}.
	However, we give a complete proof here for readers' convenience.
	
	We first construct a definable subset $V$ of $G$, a definable subset $Y$ of $G \times G$ and finitely many elements $g_1,\ldots g_m$ in $G$ satisfying the following conditions:
	\begin{enumerate}
		\item[(a)] the set $V$ is open and huge in $G$ and it is a definable $\mathcal C^r$ submanifold;
		\item[(b)] the inversion is a $\mathcal C^r$ map from $V$ onto $V$;
		\item[(c)] the set $Y$ is open and huge in $G \times G$, and the restriction of the multiplication to $Y$ is of class $\mathcal C^r$ and assumes values in $V$;
		\item[(d)] for every $a \in V$ and $a' \in G$, the definable sets $U_1(a):=\{b \in G\;|\; (b,a) \in Y\}$, $U_2(a):=\{b \in G\;|\; (b^{-1},ba) \in Y\}$ and $U_3(a,a'):=\{b \in G\;|\;(ba',a) \in Y\}$ are huge in $G$;
		\item[(e)] $G=\bigcup_{i=1}^m g_iG$.
	\end{enumerate}

We can take a definable subset $V_0$ of $G$ such that $V_0$ is open and huge in $G$, it is a definable $\mathcal C^r$ submanifold and the restriction of the inversion to $V_0$ is of class $\mathcal C^r$ by Lemma \ref{lem:huge_map}.
Let $f:G \times G \to G$ be the multiplication in $G$.
The inverse image $f^{-1}(V_0)$ is open and huge in $G \times G$ by Lemma \ref{lem:basic_grp2}.
Therefore,  $(V_0 \times V_0) \cap  f^{-1}(V_0)$ is open and huge in $G \times G$ by Lemma \ref{lean:product} and Lemma \ref{lem:sum_large}.
We can take a definable subset $Y_0$ of $G \times G$ such that $Y_0$ is open and huge in $G \times G$ and the restriction of $f$ to $Y_0$ is of class $\mathcal C^r$ by Lemma \ref{lem:huge_map}.
Set $Y_1:= Y_0 \cap (V_0 \times V_0) \cap  f^{-1}(V_0)$.
It is open and huge in $G \times G$ by Lemma \ref{lem:sum_large}.
It is obvious the the restriction of $f$ to $Y_1$ is of class $\mathcal C^r$.

Now we define 
	\begin{align*}
	&W_1=\{a \in V_0\;|\; \{b \in G\;|\; (b,a) \in Y_1\} \text{ is huge in }G\};\\
	&W_2=\{a \in V_0\;|\; \{b \in G\;|\; (b^{-1},ba) \in Y_1\} \text{ is huge in }G\}.
\end{align*}
They are huge in $G$ by Lemma \ref{lem:basic_grp3}.
Set $V_1=W_1 \cap W_2$ which is huge in $G$ by Lemma \ref{lem:sum_large}.
Set $V_2=\myint_G(V_1)$ and $V=V_2 \cap V_2^{-1}$.
The definable set $V_2$ is open and huge in $G$ by Lemma \ref{lem:lean_interior} and $V$ is open and huge in $G$ by Lemma \ref{lem:lean_homeo} and Lemma \ref{lem:sum_large}.
The restriction of inversion to $V$ is a definable $\mathcal C^r$ map from $V$ to $V$.
The definable set $V$ is a definable $\mathcal C^r$ submanifold by Lemma \ref{lem:open_mfd} because $V$ is an open subset of $V_0$.
The definable set $V$ fulfills conditions (a) and (b).

Set $Y=(V \times V) \cap Y_1 \cap f^{-1}(V)$. 
It is obvious that $Y$ is open in $G$ and $f(Y) \subseteq V$.
The inclusion $Y \subseteq Y_0$ implies that the restriction of $f$ to $Y$ is of class $\mathcal C^r$.
Note that $f^{-1}(V)$ is huge in $G$ by Lemma \ref{lem:basic_grp2}.
The definable set $Y$ is huge in $G$ by Lemma \ref{lean:product} and Lemma \ref{lem:sum_large}.
We have shown that $Y$ fulfills condition (c).

The following equalities hold:
\begin{align*}
	&U_1(a)= \{b \in G\;|\; (b,a) \in Y\}= V \cap Va^{-1} \cap \{b \in G\;|\; (b,a) \in Y_1\};\\
	&U_2(a)= \{b \in G\;|\; (b^{-1},ba) \in Y\}= V \cap Va^{-1} \cap \{b \in G\;|\; (b^{-1},ba) \in Y_1\};\\
	&U_3(a,a')=\{b \in G\;|\; (ba',a) \in Y\}=U_1(a) \cdot (a')^{-1}.
\end{align*}
Since $V \subseteq W_1 \cap W_2$, by the definition of $W_1$ and $W_2$, the definable sets $\{b \in G\;|\; (b,a) \in Y_1\}$ and $\{b \in G\;|\; (b^{-1},ba) \in Y_1\}$ are huge in $G$ for each $a \in V$.
Using this fact, Lemma \ref{lem:sum_large} and Lemma \ref{lem:lean_homeo}, we can prove that $U_1(a)$, $U_2(a)$ and $U_3(a,a')$ are huge in $G$ .
The proof is left to readers.
We have demonstrated that condition (d) is fulfilled by $V$ and $Y$.

Finally, because $V$ is huge in $G$, we can take $g_1,\ldots, g_m \in G$ satisfying the equality $G=\bigcup_{i=1}^m g_iV$ by Proposition \ref{porp:top_grp_cover}.
We have constructed definable sets $V$, $Y$ and finitely many elements $g_1,\ldots,g_m \in G$ satisfying conditions (a) through (e).
\medskip

We next prove two claims.
\medskip

\textbf{Claim 1.}
For any $a,b \in G$, the set $Z(a,b):=\{x \in V\;|\;a \cdot x \cdot b \in V\}$ is open in $V$ and the map $g_{a,b}: Z(a,b) \ni x \mapsto a \cdot x \cdot b \in a \cdot Z(a,b) \cdot b$ is a definable $\mathcal C^r$ diffeomorphism.
\begin{proof}[Proof of Claim 1]
	It is obvious that $Z(a^{-1},b^{-1})=a \cdot Z(a,b) \cdot b$ and $g_{a^{-1},b^{-1}}$ is the inverse of $g_{a,b}$.
	Therefore, we have only to prove that $g_{a,b}$ is a definable $C^r$ map for each $a,b \in G$.
	We have only to show that, for each $a,b \in G$ and $x_0 \in Z(a,b)$, there exists a definable open subset $U$ of $G$ such that $x_0 \in U \subseteq Z(a,b)$ and the restriction of $g_{a,b}$ is of class $\mathcal C^r$.
	
	Fix arbitrary elements $a,b \in G$ and $x_0 \in Z(a,b)$.
	There exist $b_1,b_2 \in V$ such that $b=b_1 \cdot b_2$ by Lemma \ref{lem:basic_grp1}.
	Consider the definable set 
	\begin{align*}
		Z_0&:=\{c \in V\;|\; (c \cdot a, x_0 ) \in Y,(c \cdot a \cdot x_0,b_1) \in Y, (c \cdot a \cdot x_0 \cdot b_1, b_2) \in Y, \\
		&\quad (c^{-1}, c \cdot a \cdot x_0 \cdot b) \in Y\}\\
		&=U_3(x_0,a) \cap U_3(b_1,a \cdot x_0) \cap U_3(b_2,a \cdot x_0 \cdot b_1) \cap U_2(a \cdot x_0 \cdot b).
	\end{align*}
	The definable set $Z_0$ is huge in $G$ by condition (d) and Lemma \ref{lem:sum_large}.
	In particular, the definable set $Z_0$ is not empty. 
	We take a point $c \in Z_0$ and set
	\begin{align*}
		U&:=\{x \in V\;|\; (c \cdot a, x) \in Y,(c \cdot a \cdot x,b_1) \in Y, (c \cdot a \cdot x \cdot b_1, b_2) \in Y, \\
		&\quad (c^{-1}, c \cdot a \cdot x \cdot b) \in Y\}.
	\end{align*}
	The definable set $U$ is nonempty and open in $G$ because $x_0 \in U$ and, $V$ and $Y$ are open in $G$ and $G \times G$, respectively. 
	Recall that the restriction of multiplication to $Y$ is of class $C^r$ and assumes values in $V$.
	Note that the map $g_{a,b}$ is the composition of definable $\mathcal C^r$ maps:
	\begin{align*}
		x &\mapsto (c \cdot a, x) \mapsto c \cdot a \cdot x \mapsto (c \cdot a \cdot x, b_1) \mapsto  c \cdot a \cdot x \cdot b_1\\
		&\mapsto (c \cdot a \cdot x \cdot b_1, b_2) \mapsto c \cdot a \cdot x \cdot b \mapsto (c^{-1}, c \cdot a \cdot x\cdot b) \mapsto a \cdot x \cdot b.
	\end{align*}
	Every elements in $G$ appeared in the above formula such as $c \cdot a \cdot x $ belongs to $V$, and every elements in $G \times G$ appeared in the above formula such as $(c \cdot a,x)$ belongs to $Y$ whenever $x \in U$.
	Consequently, the restriction of $g_{a,b}$ to $U$ is of class $\mathcal C^r$ by condition (c).
	We have completed the proof of Claim 1.
\end{proof}

\textbf{Claim 2.}
For any $a,b \in G$, the set $Z'(a,b):=\{(x,y) \in V \times V\;|\;a \cdot x \cdot b \cdot y \in V\}$ is open in $V \times V$ and the map $h_{a,b}: Z'(a,b) \ni (x,y) \mapsto a \cdot x \cdot b \cdot y \in V$ is a definable $\mathcal C^r$ map.
\begin{proof}[Proof of Claim 2]
Fix arbitrary $a, b \in G$ and $(x_0,y_0) \in Z'(a,b)$.
In the same manner as Claim 1, we have only to show that there exists a definable open subset $U$ of $Z'(a,b)$ containing the point $(x_0,y_0)$ to which the restriction of $h_{a,b}$ is of class $\mathcal C^r$.

We can take $b_1,b_2 \in V$ so that $b=b_1 \cdot b_2$ by Lemma \ref{lem:basic_grp1}.
	Consider the definable set 
\begin{align*}
	Z'_0&:=\{c \in V\;|\; (c \cdot a, x_0 ) \in Y,(c \cdot a \cdot x_0,b_1) \in Y, (c \cdot a \cdot x_0 \cdot b_1, b_2) \in Y, \\
	&\quad (c \cdot a \cdot x_0 \cdot b, y_0) \in Y, (c^{-1}, c \cdot a \cdot x_0 \cdot b \cdot y_0) \in Y\}\\
	&=U_3(x_0,a) \cap U_3(b_1,a \cdot x_0) \cap U_3(b_2,a \cdot x_0 \cdot b_1) \cap U_3(y_0,a \cdot x_0 \cdot b) \\
	& \quad \cap  U_2(a \cdot x_0 \cdot b \cdot y_0).
\end{align*}
The definable set $Z'_0$ is nonempty for the same reason as the proof in Claim 1.
We take a point $c \in Z'_0$ and set
\begin{align*}
	U&:=\{(x,y) \in V \times V\;|\; (c \cdot a, x) \in Y,(c \cdot a \cdot x,b_1) \in Y, (c \cdot a \cdot x \cdot b_1, b_2) \in Y, \\
	&\quad  (c \cdot a \cdot x \cdot b, y) \in Y,  (c^{-1}, c \cdot a \cdot x \cdot b \cdot y) \in Y\}.
\end{align*}
The definable set $U$ is open and containing the point $(x_0,y_0)$ and the restriction of the map $h_{a,b}$ to $U$ is of class $\mathcal C^r$.
We can prove these facts in the same manner as Claim 1.
We omit the details.
\end{proof}

We consider the family of definable homeomorphisms $(\varphi_i:U_i:=g_iV \ni g_ig \mapsto g \in V)_{1 \leq i \leq m}$.
We want to show that it is a definable $\mathcal C^r$ structure on $G$.
We first prove that $\varphi_{ij}:\varphi_i(U_i \cap U_j) \to \varphi_j(U_i \cap U_j)$  given by $\varphi_{ij}(x)=\varphi_j(\varphi_i^{-1}(x))=g_j^{-1} \cdot g_i \cdot x$ is a definable $\mathcal C^r$ diffeomorphism whenever $1 \leq i,j \leq m$ and $U_i \cap U_j \neq \emptyset$.
Since $\varphi_{ji}$ is the inverse map of $\varphi_{ij}$, we have only to prove that $\varphi_{ij}$ is of class $\mathcal C^r$.
Note that $\varphi_i(U_i \cap U_j)=g_i^{-1} \cdot (g_i V \cap g_j V)=V \cap g_i^{-1} \cdot g_j V=\{x \in V\;|\; g_j^{-1} \cdot g_i \cdot x \in V\}=Z(g_j^{-1} \cdot g_i,e)$, where $e$ is the identity element in $G$. 
The map $\varphi_{ij}$ is of class $\mathcal C^r$ by Claim 1.
We have proven that $(G,(\varphi_{i})_{1 \leq i \leq m})$ is an abstract definable $\mathcal C^r$ manifold.

Let $\iota:G \to G$ be the inversion of $G$.
We next prove that $\iota$ is of class $\mathcal C^r$ as a map between abstract definable $\mathcal C^r$ manifolds.
For that purpose, we have to prove that the map $\iota'_{ij}=\varphi_j \circ \iota \circ \varphi_i^{-1}: \varphi_i(U_i \cap \iota^{-1}(U_j)) \to \varphi_j(\iota(U_i) \cap U_j)$ is of class $\mathcal C^r$ whenever $U_i \cap \iota^{-1}(U_j) \neq \emptyset$.
Note that $\iota'_{ij}(x)=g_j^{-1} \cdot (g_i \cdot x)^{-1}=g_j^{-1} \cdot x^{-1} \cdot g_i^{-1}$.
We have $\varphi_i(U_i \cap \iota^{-1}(U_j)) =g_i^{-1} \cdot (g_iV \cap \iota^{-1}(g_jV)) = g_i^{-1} \cdot (g_iV \cap V^{-1} g_j^{-1}) =V \cap (g_i^{-1}V g_j^{-1})$ because $V=V^{-1}$ by condition (b).
The map $\iota'_{ij}$ is the composition of the map $g_{g_j^{-1}, g_i^{-1}}$ defined in Claim 1, which is of class $\mathcal C^r$ by Claim 1,  with the restriction of $\iota$ to $V \cap (g_i^{-1}V g_j^{-1})$, which is also of class $\mathcal C^r$ by condition (b).
In conclusion, the map $\iota'_{ij}$ is of class $\mathcal C^r$.

The final task is to show that the multiplication is of class $\mathcal C^r $ as a map between abstract definable $\mathcal C^r$ manifolds.
We have only to show that the map $f'_{ijk}:=\varphi_k \circ f \circ (\varphi_i \times \varphi_j)^{-1}:(\varphi_i \times \varphi_j)((U_i \times U_j) \cap f^{-1}(U_k)) \to V_k$ whenever $(U_i \times U_j) \cap f^{-1}(U_k) \neq \emptyset$.
We have $f'_{ijk}(x,y)=g_k^{-1} \cdot g_i \cdot x \cdot g_j \cdot y$ and $(\varphi_i \times \varphi_j)((U_i \times U_j) \cap f^{-1}(U_k))=\{(x,y) \in V \times V\;|\; g_k^{-1} \cdot g_i \cdot x \cdot g_j \cdot y \in V\}=Z'(g_k^{-1} \cdot g_i,g_j)$.
Therefore,  $f'_{ijk}$ coincides with $h_{g_k^{-1} \cdot g_i,g_j}$ in the notation of Claim 2.
Claim 2 implies that $f'_{ijk}$ is of class $\mathcal C^r$.
\end{proof}


\begin{thebibliography}{99}
\bibitem{vdD}
L. van den Dries, 
\emph{Tame topology and o-minimal structures},
London Mathematical Society Lecture Note Series, vol. 248.
Cambridge University Press, (Cambridge, 1998).


\bibitem{vdD2}
L. van den Dries, 
\emph{Dimension of definable sets, algebraic boundedness and henselian fields},
Ann. Pure Appl. Logic, \textbf{45} (1989), 189-209.


\bibitem{Fornasiero0}
A. Fornasiero,
\emph{Locally o-minimal structures and structures with locally o-minimal open core},
Ann. Pure Appl. Logic, \textbf{164} (2013), 211-229.

\bibitem{Fornasiero}
A. Fornasiero,
\emph{D-minimal structures version 20},
preprint (2021) arXiv:2107.04293.

\bibitem{FM}
H. Friedman and C. Miller, 
\emph{Expansions of o-minimal structures by fast sequences},
J. Symbolic Logic, \textbf{70} (2005), 410-418. 

\bibitem{Fuji_tame}
M. Fujita, 
\emph{Locally o-minimal structures with tame topological properties},
J. Symbolic Logic, \textbf{88} (2023), 219-241.

\bibitem{Fujita}
M. Fujita,
\emph{Definable quotients in d-minimal structures},
preprint (2023) arXiv:2311.08699v1.

\bibitem{FK}
M. Fujita and T. Kawakami, 
\emph{Approximation of definable functions in a definably complete locally o-minimal structure and imbedding theorem},
preprint, (2023) arXiv:2301.04264v1.

%\bibitem{FK2}
%M. Fujita and T. Kawakami, 
%\emph{Definable $\mathcal C^r$ imbedding theorem},
%K\^oky\^uroku, (2023).

\bibitem{FKK}
M. Fujita, T. Kawakami and W. Komine, 
\emph{Tameness of definably complete locally o-minimal structures and definable bounded multiplication},
Math. Log. Quart., \textbf{68} (2022), 496-515.

%\bibitem{J}
%W.~Johnson,
%{Fun with fields},
%PhD Thesis, UC Berkeley (2016).

\bibitem{Miller-dmin}
C. Miller,
\emph{Tameness in expansions of the real field},
In M. Baaz, S. -D. Friedman and J. Kraj\'{i}\u{c}ek eds., 
Logic Colloquium \textquotesingle 01,
Cambridge University Press (2005),
281-316.

\bibitem{M}
C. Miller,
\emph{Expansions of dense linear orders with the intermediate value property},
J. Symbolic Logic, \textbf{66} (2001), 1783-1790.

\bibitem{Miller-choice}
C. Miller,
\emph{Definable choice in d-minimal expansions of ordered groups},
unpublished note available at \texttt{https://people.math.osu.edu/miller.1987/eidmin.pdf} (2006)

\bibitem{MT}
C. Miller and J. Tyne, 
\emph{Expansions of o-minimal structures by iteration sequence}, 
Notre Dam J. Formal Logic, 47 (2006) 93-99.

\bibitem{Morley}
A. Mosley,
\emph{Groups definable in topological structures},
PhD thesis, Queen Mary and Westfield college, London University, 1996.

\bibitem{Pillay1}
A. Pillay,
\emph{First order topological structures and theories},
J. Symbolic Logic, \textbf{52} (1987), 763-778.

\bibitem{Pillay2}
A. Pillay,
\emph{On groups and fields definable in o-minimal structures},
J. Pure Appl. Alg., \textbf{53} (1988), 239-255.

\bibitem{Wencel}
R. Wencel,
\emph{Groups, group actions and fields definable in first-order topological structures},
Math. Logic. Quart. \textbf{58} (2012), 449-467.
\end{thebibliography}
\end{document}